\newtheorem{lemma}{Lemma}[section]
\newtheorem{remark}[lemma]{Remark}
\newtheorem{theorem}[lemma]{Theorem}
\newtheorem{proposition}[lemma]{Proposition}
\title{Counting Phylogenetic Networks of Level $1$ and $2$}
 \author[M. Bouvel]{Mathilde Bouvel}
       \address[MB]{Institut f\"ur Mathematik, Universit\"at Z\"urich, Winterthurerstr. 190, CH-8057 Zürich, Switzerland}
       \email{mathilde.bouvel@math.uzh.ch}
 \author[P. Gambette]{Philippe Gambette}
       \address[PG]{Universit{\'e} Paris-Est, LIGM (UMR 8049), UPEM, CNRS, ESIEE, ENPC, F-77454, Marne-la-Vall{\'e}e, France}
       \email{philippe.gambette@u-pem.fr}       
 \author[M. Mansouri]{Marefatollah Mansouri}
       \address[MM]{Technische Universit{\"a}t Wien, Department of Discrete Mathematics and Geometry, Wiedner Hauptstra{\ss}e 8-10/104, A-1040 Wien, Austria.}
       \email{marefatollah.mansouri@tuwien.ac.at}    
\begin{document}
\maketitle              

\begin{abstract}
Phylogenetic networks generalize phylogenetic trees, and 
have been introduced in order to describe evolution in the case of transfer of genetic material
between coexisting species. 
There are many classes of phylogenetic networks, 
which can all be modeled as families of graphs with labeled leaves.
In this paper, we focus on 
rooted and unrooted level-$k$ networks and provide enumeration
formulas (exact and asymptotic) for rooted and unrooted level-1 and level-2 phylogenetic networks 
with a given number of leaves.
We also prove that the distribution of some parameters of these networks 
(such as their number of cycles) are asymptotically normally distributed. 
These results are obtained by first providing a recursive description 
(also called combinatorial specification) of our networks, 
and by next applying classical methods of enumerative, symbolic and analytic combinatorics.
\end{abstract}

\textbf{Keywords:} phylogenetic networks, level, galled trees, counting, 
combinatorial specification, 
generating function, asymptotic normal distribution

\medskip

\textbf{Mathematics Subject Classification (2010):} 05A15, 05A16, 92D15

\section{Introduction}

Phylogenetic networks generalize phylogenetic trees introducing 
reticulation vertices, which have two parents, and represent 
ancestral species resulting from the transfer of genetic material
between coexisting species, through biological processes such
as lateral gene transfer, hybridization or recombination.
More precisely, binary phylogenetic networks are usually defined
as rooted directed acyclic graphs with exactly one root,
tree vertices having one parent and 2 children,
reticulation vertices having 2 parents and one child and 
labeled leaves. The leaves are bijectively labeled by 
a set of taxa, which correspond to currently living species.

As for trees, phylogenetic networks can be rooted or unrooted.
Ideally, phylogenetic networks should be rooted, 
the root representing  the common ancestor of all taxa 
labeling the leaves.
But several methods which reconstruct phylogenetic networks,
such as combinatorial~\cite{HMSW2018,IerselMoulton2018}, distance-based~\cite{BDM2012,WTM2014} or 
parsimony-based methods~\cite{PosadaCrandall2001,LabarreVerwer2014},
do not produce inherently rooted networks, but provide
unrooted networks where tree vertices
and reticulation vertices cannot be distinguished.

An important parameter that allows to measure the complexity of a phylogenetic networks is its level. 
Phylogenetic trees are actually phylogenetic networks of level $0$, 
and the level of a network $N$ measures ``how far from a tree'' $N$ is. 

The problem of enumerating (rooted or unrooted) trees is a very classical one in enumerative combinatorics. 
Solving this problem actually led to general methods for enumerating other tree-like structures, 
where generating functions play a key role. 
We will review some of these methods in Section~\ref{sec:GF}. 
These methods have successfully been used by Semple and Steel~\cite{SempleSteel2006} 
to enumerate two families of phylogenetic networks, 
namely unicyclic networks and unrooted level-$1$ networks (also called \emph{galled trees}). 
Their results include an equation defining implicitly the generating function 
for unrooted level-$1$ networks (refined according to two parameters), 
which yields a closed formula for the number of  unrooted level-$1$ networks 
with $n$ (labeled) leaves, $k$ cycles, and a total of $m$ edges (also called arcs) across all the cycles. 
An upper bound on the number of unlabeled galled trees is also provided in~\cite{CHT2018}.
Other counting results have been more recently obtained 
on other families of phylogenetic networks, for example on so-called normal and tree-child networks~\cite{MSW2015,FGM2018} and on galled networks~\cite{GRZ2018}.

In this paper, we extend the results of Semple and Steel in several ways. 
First, about unrooted level-$1$ networks, we provide an asymptotic estimate 
of the number of such networks with $n$ (labeled) leaves. 
We also prove that the two parameters considered by Semple and Steel 
are asymptotically normally distributed. 
Second, we consider rooted level-$1$ networks, 
whose enumeration does not seem to have been considered so far in the literature. 
For these networks, we provide a closed formula counting them by number of leaves, 
together with an asymptotic estimate, 
and a closed formula for their enumeration refined by two parameters 
(the number of cycles and number of edges across all the cycles). 
Moreover, we show that these two parameters are asymptotically normally distributed. 
Finally, we consider both unrooted and rooted level-$2$ networks.
Similarly, we provide in each case exact and asymptotic formulas for their enumeration, 
and prove asymptotic normality for some parameters of interest, 
namely: the number of bridgeless components of strictly positive level, 
and the number of edges across them. 
These parameters are a generalization for level-$k$ ($k>2$) of those considered by Semple and Steel for level-1, 
in the sense that they quantify how different from a tree these phylogenetic networks are. 

The results of this paper rely heavily on analytic combinatorics~\cite{FlajoletSedgewick2008}. This framework can also be used to derive uniform random generators (for example with the recursive method~\cite{FlajoletEtAl1994} or with a Boltzmann sampler~\cite{DuchonEtAl2004}) directly from the specifications of the classes of phylogenetic networks given below. This could be useful for applications in bioinformatics, especially to generate simulated data in order to evaluate the speed or the quality of the output of algorithms dealing with phylogenetic networks.

Table~\ref{tab:overview} provides an overview of our results, and of where they can be found in the paper. 
\begin{table}[ht]
    \centering
    \begin{tabular}{|l|l|l|l|l|}
    \hline
        Type of network & Unrooted, & Rooted, & Unrooted, & Rooted, \\
         & level-$1$ & level-$1$ & level-$2$ & level-$2$ \\
    \hline
        Letter $\mathcal{X}$ denoting the class & 
        $\mathcal{G}$ (\textbf{g}alled) & $\mathcal{R}$ (\textbf{r}ooted) & $\mathcal{U}$ (\textbf{u}nrooted) & $\mathcal{L}$ (\textbf{l}ast) \\
    \hline
        Eq. for the EGF $X(z)$ & Thm.~\ref{thm:GF_Unrooted_1} \hfill $(*)$& Thm.~\ref{thm:GF_Rooted_1}& Thm.~\ref{thm:GF_Unrooted_2}&Thm.~\ref{thm:GF_rooted_level2} \\
    \hline
        Exact formula for $x_n$ & Thm.~\ref{thm:GF_Unrooted_1} \hfill $(*)$& Prop.~\ref{prop:enum_Rooted_1}& Prop~\ref{eq:exact_unrooted2} & Prop.~\ref{exactrooted2}\\
    \hline
        Asymptotic estimate of $x_n$ & Prop.~\ref{prop:asym_Unrooted_1}& Prop.~\ref{prop:asym_Rooted_1}& Prop.~\ref{prop:asym_Unrooted_2}& Prop.~\ref{asy_rooted2} \\
    \hline
        Eq. for the multivariate EGF & Eq.~\eqref{eq:unrooted1multivariate} \hfill $(*)$& Eq.~\eqref{eq:rooted1multivariate}& Eq.~\eqref{eq:multi_unrooted2} & Eq.~\eqref{eq:multi_rooted2}\\
    \hline
        Asymptotic normality & Prop.~\ref{prop:normaldistribunrooted1} & Prop.~\ref{prop:normaldistribrooted2}& Prop.~\ref{prop:normaldistribunrooted2} & Prop.~\ref{asymultirooted2}\\
    \hline
    \end{tabular}
    
    \smallskip
    
    \caption{Overview of our main results. EGF means exponential generating function. 
    The results marked with $(*)$ also appear in the work of Semple and Steel~\cite{SempleSteel2006}.
    In addition, refined enumeration formulas for unrooted and rooted level-1 networks are provided in 
    \cite[Thm. 4]{SempleSteel2006} and Prop.~\ref{prop:refinedEnumerationRooted1} respectively. 
    (Although the proof method applies to obtain such formulas for level-2 as well, 
    the computations would however be rather intricate, 
    and the interest \emph{a priori} of the formulas so obtained questionable, hence our choice not to do it.)
    \label{tab:overview}}
\end{table}

For the reader's convenience, we also include in this introduction 
the beginnings of the enumeration sequences of the four types of networks we consider, as well as their asymptotic behavior -- see Table~\ref{TableNumbers}. 
We have added these sequences to the OEIS~\cite{OEIS}, and we also include in Table~\ref{TableNumbers} their OEIS reference. 
We also provide in the supplementary material the code in the DOT language, 
as well as a visualization with GraphViz, 
of the 15 unrooted level-1 networks on 4 leaves,
the 3 rooted level-1 networks on 2 leaves, 
the 6 unrooted level-2 networks on 3 leaves
and the 18 rooted level-2 networks on 2 leaves.

\begin{table}[ht]
\centering
\begin{tabular}{c||c|c|c|c}
$n$ & $g_{n-1}$ & $r_n$ & $u_{n-1}$ & $\ell_n$ \\
\hline
1 & 0 & 1 & 0 & 1\\
2 & 1 & 3 & 1 & 18\\
3 & 2 & 36 & 6 & 1 143\\
4 & 15 & 723 & 135 & 120 078 \\
5 & 192 & 20 280 & 5 052 & 17 643 570  \\
6 & 3 450 & 730 755 &264 270 & 3 332 111 850\\
\hline
as $n \to \infty$ & $c_1 \approx 0.20748$ & $c_1 \approx 0.1339$ & $c_1 \approx 0.07695$ & $c_1 \approx 0.02931$\\
$x_n \sim c_1 c_2^n n^{n-1}$ with & $c_2 \approx 1.89004$ &  $c_2 \approx 2.943$ & $c_2 \approx 5.4925$ & $c_2 \approx 15.4333$\\
\hline
OEIS reference & A328121 & A328122 & A333005 & A333006 \\
\end{tabular}
\smallskip
\caption{The numbers of rooted and unrooted level-1 or level-2 networks on $n$ leaves. \label{TableNumbers}}
\end{table}

\medskip

The remainder of the article is organized as follows. 
First, Section~\ref{sec:phylo_intro} recalls definitions and properties of phylogenetic networks 
that are important for our purpose. 
Next, Section~\ref{sec:GF} reviews some methods of enumerative and analytic combinatorics 
which we will apply to solve the enumeration of our networks in the following sections. 
Precisely, Sections~\ref{sec:unrooted1} and~\ref{sec:rooted1} deal with level-$1$ networks, unrooted and rooted, while Sections~\ref{sec:unrooted2} and~\ref{sec:rooted2} focus on level-$2$ networks. 

\section{Some properties of phylogenetic networks}\label{sec:phylo_intro}

\subsection{Rooted binary phylogenetic networks}

In graph theory, a \textit{cut arc} or \textit{bridge} of a directed graph 
 $G$ is an arc whose deletion disconnects $G$. A \textit{bridgeless component} of a graph 
 is a maximal induced subgraph of $G$ without cut arcs.

We define a \textit{binary rooted phylogenetic network} $N$
on a set $X$ of leaf labels, for $|X|\geq 2$ 
as a directed acyclic 
 graph having: 
\begin{enumerate}
    \item exactly one \textit{root}, that is an in-degree-0 out-degree-2 vertex; 
    \item \textit{leaves}, that is in-degree-1 out-degree-0 vertices which are bijectively labeled by elements of $X$;
    \item \textit{tree vertices}, that is in-degree-1 out-degree-2 vertices;
    \item \textit{reticulation vertices}, that is in-degree-2 out-degree-1 vertices; \\
    and such that 
    \item for each bridgeless component $B$ of $N$, 
there exist at least two cut arcs of $N$ whose tail\footnote{The \emph{tail} of an arc 
is by definition its starting point. Its arrival point is called \emph{head}.} belongs to $B$ and whose head does not belong to $B$.
\end{enumerate}
A binary rooted phylogenetic network $N$ on a singleton ${x}$ is a single vertex labeled by $x$.

As illustrated in Fig.~\ref{fig:rooted-unrooted}, a binary rooted phylogenetic network $N$ is said to be
\textit{level-$k$} (or called a \textit{level-$k$ network} for short)
if the number of reticulation vertices contained in any
bridgeless component of $N$ is less than or equal to $k$. 
In a level-1 network $N$, each bridgeless component $B$ having at least two vertices 
consists of the union of two directed paths, which start and end at the same vertices, 
called \emph{source} and \emph{sink} respectively. 
The source is actually either the root of $N$, or the head of a cut arc of $N$, 
and the sink is the unique reticulation vertex of $B$. 
Such bridgeless components are called \emph{cycles}. 

Note that variations on the definition of rooted binary phylogenetic networks 
are around in the literature, and a few comments on our choice of definition are in order. 
Like in most publications about phylogenetic networks,  our definition of binary rooted phylogenetic networks
does not allow multiple arcs.
As our goal is to study a model of binary phylogenetic
networks that could be counted if their number of leaves and level 
are fixed, condition (5) is necessary to ensure that
there are finitely many phylogenetic networks 
with a given number of leaves and level. Note that this restriction has already appeared in the literature under the name ``networks with no redundant biconnected components''~\cite{IerselMoulton2014} or ``with no redundant blobs''~\cite{GIKPS2016}.
Indeed, without it, such networks have an unbounded number of vertices: 
this can be seen by replacing any cut arc of the network by a sequence of networks isomorphic to the one in Fig.~\ref{rootedlevel2}($2a$), which has only one incoming cut-arc and one outgoing cut-arc.

Similarly in some algorithmic-oriented papers about phylogenetic
networks, bridgeless components with three vertices and
two outgoing arcs are 
forbidden because the information needed to distinguish
those components from simple tree vertices also connected
with two outgoing arcs is not available in the input data. In the perspective of counting those
objects we do not impose this restriction. But it could easily
be added to our combinatorial descriptions and formulas below, 
to be taken into account if needed.

\subsection{Unrooted binary phylogenetic networks}

Now, we extend the latter definition to unrooted phylogenetic networks.
A \textit{cut-edge} or \textit{bridge} of an undirected graph 
 $G$ is an edge whose removal disconnects the graph. A \textit{bridgeless component} of a graph 
 $G$ is a maximal induced subgraph of $G$ without cut-edges.

An \textit{unrooted binary phylogenetic network} $N$ on a set $X$ of at least 2 leaf labels is a loopless (undirected) graph whose vertices have either degree $3$ (\textit{internal vertices}) or degree $1$ (\textit{leaves}), such that its set $L(N)$ of leaves is bijectively labeled by $X$ and such that for each bridgeless component $B$ of $N$ having strictly more than one vertex, the set of cut-edges incident with some vertex of $B$ has size at least 3.
An unrooted binary phylogenetic network $N$ on a singleton ${x}$ is a single vertex labeled by $x$.
An unrooted binary 
phylogenetic tree is an unrooted binary phylogenetic network 
with no bridgeless component containing strictly 
more that one vertex.
An unrooted binary phylogenetic network 
is said to be \textit{level-$k$}
(or called an \textit{unrooted level-$k$ network} for short)
if an unrooted binary phylogenetic tree 
can be obtained by first removing at most $k$ edges per bridgeless component, and then, for each degree-2 vertex, contracting the edge between this vertex and one of its neighbours.
We denote by \emph{cycles}
the bridgeless components of unrooted level-1 networks 
having strictly more than one vertex. 
(Indeed, they are just cycles -- of size at least $3$ -- in the graph-theoretical sense.)

Note that given a rooted level-$k$ network $N$ on $n$ leaves,
we can obtain an unrooted binary phylogenetic network $N'$ on $n+1$ leaves with the following \emph{unrooting procedure}: 
add a vertex adjacent to the root of $N$, labeled with an extra leaf label (usually denoted $\#$),
and ignore all arc directions.
Theorem~1 of~\cite{GBP2012} implies in addition 
that the network $N'$ so obtained is an unrooted level-$k$ network. 
This unrooting procedure which consists of building an unrooted level-$k$ network from a rooted level-$k$ network, illustrated in Fig.~\ref{fig:rooted-unrooted},
can be reversed (see Lemma 4.13 of~\cite{JJEIS2018}), although not in a unique fashion. 
Indeed, given an unrooted level-$k$ network $N'$ on $n+1$ leaves, 
it is possible to choose any leaf and delete it, making its neighbour become
the root $\rho$ of a rooted level-$k$ network $N$ obtained by: 
\begin{enumerate}
    \item placing the bridgeless component $B$ containing $\rho$ at the top;
    \item orienting downwards all the cut-edges incident with vertices of $B$; 
    \item choosing the tail $t$ of one of these cut arcs as the sink of $B$; 
    \item computing an $\rho$-$t$ numbering~\cite{LEC1967} on the vertices of $B$ if there are more than one, that is labeling vertices of $B$ with integers from 1 to the number $n_B$ of vertices of $B$, such that the labels of $\rho$ and $t$ are respectively 1 and $n_B$ and such that any vertex of $B$ except $\rho$ and $t$ is adjacent both to a vertex with a lower label and a vertex with a higher label;
    \item orienting each edge of $B$ by choosing its vertex with the lower label as the tail;
\\
and 
    \item moving downwards into the network, recursively applying this procedure on all
other bridgeless components.
\end{enumerate}
This correspondence is not one-to-one because of the choices of the leaf which is deleted, 
and most importantly because of the choices of sinks in step 3 above. 
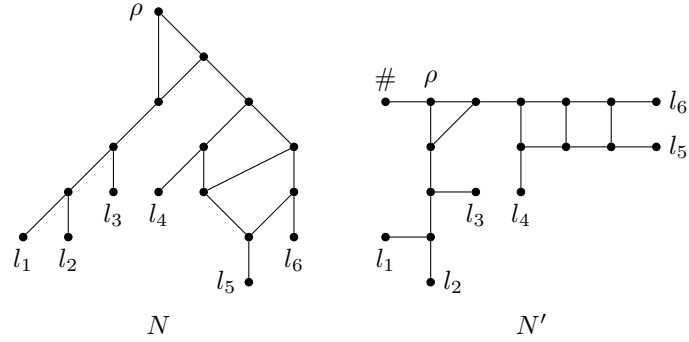
\begin{figure}[h]
	\begin{center}
\begin{tabular}{ccc}
\begin{tikzpicture}
\begin{scope}[scale=0.3]
\draw (-1,12) node {$\rho$};
\draw (0,12) node [inner sep=1pt,circle,fill,draw] (root) {};
\draw (2,10) node [inner sep=1pt,circle,fill,draw] (a) {};
\draw (0,8) node [inner sep=1pt,circle,fill,draw] (b) {};
\draw (-2,6) node [inner sep=1pt,circle,fill,draw] (c) {};
\draw (-4,4) node [inner sep=1pt,circle,fill,draw] (d) {};
\draw (4,8) node [inner sep=1pt,circle,fill,draw] (e) {};
\draw (2,6) node [inner sep=1pt,circle,fill,draw] (f) {};
\draw (2,4) node [inner sep=1pt,circle,fill,draw] (g) {};
\draw (4,2) node [inner sep=1pt,circle,fill,draw] (h) {};
\draw (6,6) node [inner sep=1pt,circle,fill,draw] (i) {};
\draw (6,4) node [inner sep=1pt,circle,fill,draw] (j) {};
\draw (-6,2) node [inner sep=1pt,circle,fill,draw] (l1) {};
\draw (-4,2) node [inner sep=1pt,circle,fill,draw] (l2) {};
\draw (-2,4) node [inner sep=1pt,circle,fill,draw] (l3) {};
\draw (0,4) node [inner sep=1pt,circle,fill,draw] (l4) {};
\draw (4,0) node [inner sep=1pt,circle,fill,draw] (l5) {};
\draw (6,2) node [inner sep=1pt,circle,fill,draw] (l6) {};
\draw (-6,1) node {$l_1$};
\draw (-4,1) node {$l_2$};
\draw (-2,3) node {$l_3$};
\draw (0,3) node {$l_4$};
\draw (3,0) node {$l_5$};
\draw (6,1) node {$l_6$};

\draw (root) -- (a);
\draw (a) -- (b);
\draw (root) -- (b);
\draw (b) -- (c);
\draw (c) -- (l3);
\draw (c) -- (d);
\draw (d) -- (l1);
\draw (d) -- (l2);
\draw (a) -- (e);
\draw (e) -- (f);
\draw (f) -- (l4);
\draw (f) -- (g);
\draw (g) -- (h);
\draw (h) -- (l5);
\draw (e) -- (i);
\draw (i) -- (g);
\draw (i) -- (j);
\draw (j) -- (l6);
\draw (j) -- (h);
\end{scope}
\end{tikzpicture}
& ~~~~ &
\begin{tikzpicture}
\begin{scope}[scale=0.3]
\draw (-6,9) node {$\#$};
\draw (-6,8) node [inner sep=1pt,circle,fill,draw] (lroot) {};
\draw (-4,9) node {$\rho$};
\draw (-4,8) node [inner sep=1pt,circle,fill,draw] (root) {};
\draw (-2,8) node [inner sep=1pt,circle,fill,draw] (a) {};
\draw (-4,6) node [inner sep=1pt,circle,fill,draw] (b) {};
\draw (-4,4) node [inner sep=1pt,circle,fill,draw] (c) {};
\draw (-4,2) node [inner sep=1pt,circle,fill,draw] (d) {};
\draw (0,8) node [inner sep=1pt,circle,fill,draw] (e) {};
\draw (0,6) node [inner sep=1pt,circle,fill,draw] (f) {};
\draw (2,6) node [inner sep=1pt,circle,fill,draw] (g) {};
\draw (4,6) node [inner sep=1pt,circle,fill,draw] (h) {};
\draw (2,8) node [inner sep=1pt,circle,fill,draw] (i) {};
\draw (4,8) node [inner sep=1pt,circle,fill,draw] (j) {};
\draw (-6,2) node [inner sep=1pt,circle,fill,draw] (l1) {};
\draw (-4,0) node [inner sep=1pt,circle,fill,draw] (l2) {};
\draw (-2,4) node [inner sep=1pt,circle,fill,draw] (l3) {};
\draw (0,4) node [inner sep=1pt,circle,fill,draw] (l4) {};
\draw (6,6) node [inner sep=1pt,circle,fill,draw] (l5) {};
\draw (6,8) node [inner sep=1pt,circle,fill,draw] (l6) {};
\draw (-6,1) node {$l_1$};
\draw (-3,0) node {$l_2$};
\draw (-2,3) node {$l_3$};
\draw (0,3) node {$l_4$};
\draw (7,6) node {$l_5$};
\draw (7,8) node {$l_6$};

\draw (root) -- (lroot);
\draw (root) -- (a);
\draw (a) -- (b);
\draw (root) -- (b);
\draw (b) -- (c);
\draw (c) -- (l3);
\draw (c) -- (d);
\draw (d) -- (l1);
\draw (d) -- (l2);
\draw (a) -- (e);
\draw (e) -- (f);
\draw (f) -- (l4);
\draw (f) -- (g);
\draw (g) -- (h);
\draw (h) -- (l5);
\draw (e) -- (i);
\draw (i) -- (g);
\draw (i) -- (j);
\draw (j) -- (l6);
\draw (j) -- (h);
\end{scope}
\end{tikzpicture}
\\
$N$ & ~ & $N'$
\end{tabular}
\end{center}
\caption[]{
A rooted level-2 network $N$ (where all arcs are directed downwards) and the unrooted level-2 network $N'$ obtained by applying the unrooting procedure on $N$.\label{fig:rooted-unrooted}}

\end{figure}

\subsection{Decomposition of rooted and unrooted level-$k$ networks}

For any bridgeless component $B$ with $k_B\leq k$ reticulation vertices
of a rooted level-$k$ network $N$,
the directed multi-graph obtained by removing all outgoing arcs, and then 
contracting each arc from an in-degree-1 out-degree-1 vertex to its child,
is called a \emph{level-$k_B$ generator}~\cite{IKKSHB2009,GBP2009}. For each $k>0$, there exists a finite list of \emph{level-$k$ generators} which can be built from level-($k-1$) generators~\cite{GBP2009}.
Therefore, depending on the level $k_\rho$ of the bridgeless component $B_\rho$ of $N$ containing its root $\rho$, $N$ can be decomposed in the following way. It is either:
\begin{itemize}
    \item a single leaf if $k_\rho=0$ and $\rho$ has out-degree 0;
    \item a root $\rho$ being the parent of the root $\rho_1$ of a rooted level-$k$ network $N_1$ and of the root $\rho_2$ of a rooted level-$k$ network $N_2$ with disjoint sets of leaf labels, if $k_\rho=0$ and $\rho$ has out-degree $2$;
    \item a level-$k_\rho$ generator $G_\rho$ containing the root, with $0 < k_\rho \leq k$, whose arcs are subdivided to create new in-degree-1 out-degree-1 vertices, to which we add a set of cut arcs, whose
tails are the out-degree-0 vertices of $G_\rho$
and the newly created in-degree-1 out-degree-1 vertices, and whose heads are roots of rooted level-$k$ networks with disjoint sets of leaf labels. 
\end{itemize}

Similarly, for any bridgeless component $B$ of an unrooted level-$k$ network $N$, the multi-graph obtained by first removing all cut-edges incident with any vertex of $B$, 
then, for each degree-2 vertex, contracting the edge between this vertex and one of its neighbours, is
called an \emph{unrooted level-$k_B$ generator}~\cite{GBP2012,HMW2016}. An unrooted level-$k_B$ generator can also be defined as
a single vertex for $k_B=0$, as two vertices linked by a multiple edge for $k_B=1$, and as a 3-regular bridgeless multi-graph with $2k_B-2$ vertices for $k_B>1$ (Lemma 6 of~\cite{HMW2016}).
Therefore, by considering a leaf $l_{\#}$ of any unrooted level-$k$ network $N$ and the bridgeless component $B$ containing the vertex adjacent to this leaf, depending on the level $k_B$ of $B$, $N$ can be decomposed in the following way. 
\begin{itemize}
    \item If $k_B=0$ and $B$ consists of a single vertex of degree $1$ in $N$, then $N$ is just the leaf $l_{\#}$ adjacent to another leaf. 
    \item If $k_B=0$ and $B$ is not a single vertex of degree $1$ in $N$, then the leaf $l_{\#}$ is adjacent to a vertex $v$ of degree $3$ in $N$, such that the other two edges incident to $v$ are cut-edges. 
    $N$ is described by the edge between $l_{\#}$ and $v$, plus the two other edges incident with $v$, which are in turn identified with edges of two unrooted level-$k$ networks $N_1$ and $N_2$ with disjoint sets of leaf labels (not containing $\#$) in such a way that $v$ is identified with a leaf $l_{\#1}$ (resp. $l_{\#2}$) of $N_1$ (resp. $N_2$), removing the leaf labels of $l_{\#1}$ and $l_{\#2}$ during this identification.   
    \item Otherwise $0<k_B\leq k$. In this case, $N$ is described by taking a level-$k_{B}$ generator whose edges are subdivided to insert vertices, and then performing identification of these inserted vertices (in a same flavor as in the previous case). Specifically, one of these inserted vertices is identified with the neighbour of $l_{\#}$ in $N$, and all others are identified with leaves of unrooted level-$k$ networks with disjoint sets of leaf labels (not containing $\#$). Again, each leaf that is identified with another vertex looses its label during this identification.
\end{itemize}
These decompositions of rooted and unrooted level-$k$ networks will be the key to our counting results below.

\section{Generating functions: some basics tools and techniques}
\label{sec:GF}

This section summarizes some of the basics on combinatorial classes and their generating functions that we will use in our work. 
Our presentation follows closely \cite{FlajoletSedgewick2008} (although with much less details), 
and the reader interested to know more on the topic is referred to \cite[mainly Chapters I.5, II.1, II.5, VI.3, VII.3, VII.4]{FlajoletSedgewick2008}. 
The reader familiar with the classical tools of analytic combinatorics may safely skip this section.

\subsection{(Univariate) generating functions and counting}

Generally speaking, a \emph{combinatorial class} $\mathcal{C}$ is a set of discrete objects, equipped with a notion of size, such that for every integer $n$ there is a finite number of objects of size $n$ in $\mathcal{C}$. We denote by $\mathcal{C}_n$ the set of objects of size $n$ in $\mathcal{C}$, and by $c_n$ the cardinality of $\mathcal{C}_n$. Specifically in this paper, 
each combinatorial class we consider is a family of level-$k$ phylogenetic networks, 
and the size of such a network is its number of leaves.

Objects of size $n$ in $\mathcal{C}$  can be seen as an arrangement (following some rules to be made precise) of $n$ atoms, which are objects of size $1$. In our context, these atoms are the leaves of the networks, representing the current species, or \emph{taxa}. 
When the atoms constituting an object are distinguishable among themselves, 
the considered combinatorial objects are said to be \emph{labeled}\footnote{Although it is also very classical, the case of \emph{unlabeled} objects (with their corresponding \emph{ordinary} generating functions) will not be useful in our work, and is therefore omitted from our presentation.}. 
Because leaves of level-$k$ networks correspond to taxa, our networks are indeed labeled combinatorial objects.
Without loss of generality (\emph{i.e.}, up to relabeling), atoms in a labeled object of size $n$ are simply labeled by integers from $1$ to $n$, and we will take this convention in our work.

To a (labeled) combinatorial class $\mathcal{C}$, we can associate its \emph{exponential} \emph{generating function} $C(z) = \sum_{n\geq 0} c_n \frac{z^n}{n!}$, which is a formal power series in $z$ encapsulating the entire enumeration of $\mathcal{C}$. 

A \emph{specification} for a combinatorial class is an unambiguous description of the objects in the class using simpler classes and possibly the class itself. 
For instance, consider labeled rooted ordered binary trees, and define their size to be the number of their leaves. Such a tree is unambiguously described as being either a leaf or composed of a root to which a left and a right subtree are attached, which are themselves labeled rooted ordered binary trees, with a \emph{consistent relabeling} of their atoms. 
By this, we mean the following: considering two trees whose atoms are labeled by $\{1, \dots, k\}$ and  $\{1, \dots, k'\}$, we can build a tree using the first (resp. second) as left (resp. right) subtree; 
the atoms of this tree are labeled by $\{1, \dots, k+k'\}$, and need to be such that the relative order between the labels in the left (resp. right) subtree is preserved (and they may be in any such way). 
This specification for labeled rooted ordered binary trees can be formally written as follows: $\mathcal{B} =  \bullet$ \ $ \uplus $ \ {\begin{tikzpicture}[level distance=15mm,baseline=-10pt]
\begin{scope}[scale=0.4]
\tikzstyle{level 1}=[sibling distance=15mm]
\node[inner sep=0pt]{$\circ$}
  child {node[inner sep=0pt]{$\mathcal{B}$}}
  child {node[inner sep=0pt]{$\mathcal{B}$}}
;
\end{scope}
\end{tikzpicture}}, 
where $\bullet$ represents a leaf (contributing $1$ to the size of the object) and $\circ$ represents an internal node (which contributes $0$ to the size). 

Specifications describing (labeled) combinatorial classes can be translated into equations satisfied by the corresponding (exponential) generating functions. The precise statement that we refer to is \cite[Theorem II.1]{FlajoletSedgewick2008}.
The following proposition summarizes the simplest cases of this translation, 
which we will often use later in this paper.
\begin{proposition}[Dictionary]
Let $\mathcal{A}$ and $\mathcal{B}$ be two labeled combinatorial classes. 
Denote by $A(z)$ and $B(z)$ their respective exponential generating functions. 
Then the generating function of the class which is the disjoint union of $\mathcal{A}$ and $\mathcal{B}$ 
(resp. the Cartesian product of $\mathcal{A}$ and $\mathcal{B}$) is $A(z) +B(z)$ (resp. $A(z)\cdot B(z)$). 
In addition, if $\mathcal{A}$ contains no object of size $0$, the class which consists of sequences of objects of $\mathcal{A}$ (\emph{i.e.}, $m$-tuples of objects of $\mathcal{A}$, for any $m \geq 0$) has generating function $\frac{1}{1-A(z)}$.
\label{dictionary}
\end{proposition}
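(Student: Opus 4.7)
The plan is to verify the three claims in turn; all three are standard instances of the labeled-sum, labeled-product, and labeled-sequence rules recorded in \cite[Theorem II.1]{FlajoletSedgewick2008}. The first assertion is immediate: when $\mathcal{C} = \mathcal{A} \uplus \mathcal{B}$, one has $c_n = a_n + b_n$, and dividing by $n!$ and summing over $n$ gives at once $C(z) = A(z) + B(z)$.

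For the Cartesian product, the key combinatorial content is the counting of consistent relabelings. An object of size $n$ in $\mathcal{A} \times \mathcal{B}$ is a tuple consisting of an integer $k$ with $0 \le k \le n$, an element $\alpha \in \mathcal{A}_k$, an element $\beta \in \mathcal{B}_{n-k}$, together with a choice of which $k$ of the $n$ labels $\{1,\dots,n\}$ are to be borne by the atoms of $\alpha$ (the other $n-k$ labels going to $\beta$), the relative order of labels on each side being inherited from the original labelings of $\alpha$ and $\beta$. This yields the binomial convolution $c_n = \sum_{k=0}^n \binom{n}{k} a_k b_{n-k}$, which, upon dividing by $n!$, is exactly the coefficient identity characterizing $C(z) = A(z) \cdot B(z)$ as exponential generating functions.

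The sequence case then follows by iterating the first two rules. Writing $\mathrm{SEQ}(\mathcal{A}) = \bigsqcup_{m \geq 0} \mathcal{A}^m$, where $\mathcal{A}^0$ is the singleton consisting of an empty sequence of size $0$, we obtain $C(z) = \sum_{m \geq 0} A(z)^m$ formally. The hypothesis that $\mathcal{A}$ contains no object of size $0$, i.e.\ $a_0 = 0$, ensures that $A(z)^m$ has valuation at least $m$ in $\mathbb{Q}[[z]]$, so only finitely many terms of the sum contribute to each coefficient and the expression is well defined as a formal power series; the usual geometric identity then gives $C(z) = \tfrac{1}{1-A(z)}$.

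The entire statement is bookkeeping around the binomial coefficient governing labeled products, and I do not foresee any genuine obstacle. The only point requiring care is to check that, in the product rule, each object is produced exactly once by the relabeling process, and that in the sequence rule the valuation hypothesis rules out any convergence issue at the level of formal power series.
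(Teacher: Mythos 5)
Your proof is correct. Note that the paper itself gives no proof of this proposition: it is stated as a summary of standard facts and delegated to \cite[Theorem II.1]{FlajoletSedgewick2008}. Your argument is exactly the standard one underlying that reference --- the disjoint-union case by adding coefficients, the product case via the binomial convolution $c_n = \sum_k \binom{n}{k} a_k b_{n-k}$ counting consistent relabelings, and the sequence case by summing the geometric series, with the hypothesis $a_0 = 0$ correctly invoked to guarantee that $\sum_{m\geq 0} A(z)^m$ is well defined as a formal power series. Nothing is missing.
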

On the previous example of binary trees, it follows from the above proposition that the corresponding generating function satisfies $B(z) = z+ B(z)^2$.

The next step is to have access to the enumeration sequence $(c_n)$ of a class $\mathcal{C}$ from an equation satisfied by the generating function $C(z)$ of $\mathcal{C}$. 
A possible way, especially in the case of tree-like objects, is to appeal to the \emph{Lagrange inversion formula} (\cite[Theorem A.2]{FlajoletSedgewick2008}). 
To state it, we introduce the notation $[z^n] C(z)$ to denote the $n$-th coefficient of the series $C(z)$; 
that is to say, writing $C(z) = \sum_{n\geq 0} \frac{c_n}{n!} z^n$, we have $[z^n] C(z) =\frac{c_n}{n!}$, or equivalently $c_n = n! \cdot [z^n] C(z)$. 

The Lagrange inversion formula is as follows. 

\begin{proposition}[Lagrange inversion formula]
Assume that a generating function $C$ satisfies an equation of the form $C(z) = z \phi(C(z))$ for $\phi(z) = \sum_{n\geq 0} \phi_n z^n$ a formal power series such that $\phi_0 \neq 0$. Then, we have:
$$ 
[z^n] C(z) = \frac{1}{n} [z^{n-1}] \phi(z)^n\textrm{.}
$$
\end{proposition}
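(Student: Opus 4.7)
The plan is to prove the formula via formal residue calculus combined with a change of variable. Since $\phi_0 \neq 0$, the equation $C(z)=z\phi(C(z))$ forces $C(0)=0$ and $C'(0)=\phi_0\neq 0$, so $C$ admits a compositional inverse as a formal power series. Rewriting the equation as $z=C(z)/\phi(C(z))$, this inverse is explicitly $\psi(w):=w/\phi(w)$, satisfying $C(\psi(w))=w$.

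My starting point is the elementary identity $[z^n]C(z)=\mathrm{res}_z\bigl(z^{-n-1}C(z)\bigr)$, where $\mathrm{res}_z(f):=[z^{-1}]f(z)$ denotes the formal residue. I then invoke the change-of-variable formula for formal residues: if $\psi$ is a formal power series with $\psi(0)=0$ and $\psi'(0)\neq 0$, then for any formal Laurent series $h$,
\[
\mathrm{res}_z\bigl(h(z)\bigr)=\mathrm{res}_w\bigl(h(\psi(w))\,\psi'(w)\bigr).
\]
Applying this with $h(z)=z^{-n-1}C(z)$ and using $\psi(w)^{-n-1}=\phi(w)^{n+1}/w^{n+1}$ together with the computation $\psi'(w)=(\phi(w)-w\phi'(w))/\phi(w)^2$, the right-hand side rewrites as
\[
\mathrm{res}_w\!\left(\frac{\phi(w)^{n-1}}{w^n}\bigl(\phi(w)-w\phi'(w)\bigr)\right)=[w^{n-1}]\phi(w)^n-[w^{n-2}]\bigl(\phi(w)^{n-1}\phi'(w)\bigr).
\]

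To conclude, I use the algebraic identity $\phi^{n-1}\phi'=\tfrac{1}{n}(\phi^n)'$ combined with the coefficient-shift rule $[w^{k-1}]f'(w)=k\,[w^k]f(w)$ to rewrite the second term as $\tfrac{n-1}{n}[w^{n-1}]\phi(w)^n$. Subtracting then yields exactly $\tfrac{1}{n}[w^{n-1}]\phi(w)^n$, as claimed.

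The main obstacle is justifying the change-of-variable formula for formal residues in a purely algebraic setting (without appealing to contour integration). The cleanest approach is to verify it first for monomials $h(z)=z^k$, and then extend by linearity and continuity in the formal topology. For $k\neq -1$, the identity $\psi^k\psi'=\tfrac{1}{k+1}(\psi^{k+1})'$ shows that both sides are residues of exact derivatives, which always vanish; the case $k=-1$ reduces to checking $[w^{-1}]\bigl(\psi'(w)/\psi(w)\bigr)=1$, which follows by writing $\psi(w)=w\,u(w)$ with $u(0)\neq 0$ and computing the logarithmic derivative explicitly.
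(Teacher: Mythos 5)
Your proof is correct. Note first that the paper does not actually prove this proposition: it is quoted as a known result with a pointer to \cite[Theorem A.2]{FlajoletSedgewick2008}, so there is no in-paper argument to compare against. What you supply is the classical, purely algebraic proof via formal residues, and all the steps check out: $\phi_0\neq 0$ does give $C(0)=0$ and $C'(0)=\phi_0\neq 0$, so the compositional inverse $\psi(w)=w/\phi(w)$ exists and has valuation $1$; the substitution $h(z)=z^{-n-1}C(z)$ correctly yields $\mathrm{res}_w\bigl(\phi(w)^{n-1}w^{-n}(\phi(w)-w\phi'(w))\bigr)$; and the identity $\phi^{n-1}\phi'=\tfrac1n(\phi^n)'$ together with $[w^{k-1}]f'=k\,[w^k]f$ turns the second term into $\tfrac{n-1}{n}[w^{n-1}]\phi^n$, giving the claimed $\tfrac1n[w^{n-1}]\phi(w)^n$ (implicitly for $n\geq 1$, which is also what the statement requires). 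Your justification of the residue change-of-variable lemma is the right one: exactness kills all monomials $z^k$ with $k\neq -1$, the logarithmic-derivative computation handles $k=-1$, and the extension to Laurent series is legitimate because $\psi$ has valuation $1$, so each coefficient of $h(\psi(w))\psi'(w)$ receives only finitely many contributions. Compared with the paper's citation-only treatment, your argument has the advantage of being self-contained and entirely formal (no analyticity or contour integration needed), at the cost of having to establish the residue substitution lemma from scratch.
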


Even though defined as formal power series, it is often useful to consider that generating functions are analytic functions of the complex variable $z$, in a small disk of convergence around the origin of the complex plane. This sometimes allows to find a closed form for the generating function in its disk of convergence, but not always. Even in this least favorable case, it enables to inherit fundamental results from complex analysis, which can be used for the purpose of enumerating combinatorial objects. In particular, we have in this tool box the \emph{Singular Inversion Theorem} (Theorem VI.6 of~\cite{FlajoletSedgewick2008}), 
which allows to derive asymptotic estimates of the coefficients of generating functions.

\begin{theorem}[Singular Inversion Theorem]
Let $C(z)$ be a generating function such that $C(0)=0$, satisfying the equation $C(z) = z \phi(C(z))$ for $\phi(z) = \sum_{n\geq 0} \phi_n z^n$ a power series such that $\phi_0 \neq 0$, all $\phi_n$ are non-negative real numbers, and $\phi(z) \neq \phi_0 + \phi_1 z$. 
Denote by $R$ the radius of convergence of $\phi$ at $0$. Assume that $\phi$ is analytic at $0$ (so that $R>0$), that the \emph{characteristic equation} $\phi(z) - z \phi'(z) = 0$ has a solution $\tau \in (0,R)$ (that is necessarily unique), and that $\phi$ is aperiodic\footnote{Aperiodicity is needed only for the third item below. The definition of aperiodicity is omitted from this paper, and can be found in~\cite[Definition IV.5]{FlajoletSedgewick2008}. A sufficient condition for a power series to be aperiodic (which applies to all examples considered in this paper), is to have $\phi_n >0$ for all $n$.\label{footnote:aperiodic}}. Then the followings hold:
\begin{itemize}
 \item $\rho = \frac{\tau}{\phi(\tau)}$ is the radius of convergence of $C$ at $0$;
 \item near $\rho$, $C(z) \sim \tau - \sqrt{\frac{2\phi(\tau)}{\phi''(\tau)}} \sqrt{1-\frac{z}{\rho}}$;
 \item when $n$ grows, $[z^n] C(z) \sim \sqrt{\frac{\phi(\tau)}{2\phi''(\tau)}} \frac{\rho^{-n}}{\sqrt{\pi n^3}}$.
\end{itemize}
\label{thm:singular_inversion}
\end{theorem}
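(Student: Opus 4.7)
The plan is to invert the functional equation analytically by studying its inverse. Writing $z = \psi(y)$ with $y = C(z)$ and $\psi(y) := y/\phi(y)$, the task reduces to inverting $\psi$ near the origin. A direct computation gives
\[ \psi'(y) = \frac{\phi(y) - y\phi'(y)}{\phi(y)^2}, \]
whose numerator is the left-hand side of the characteristic equation and whose own derivative is $-y\phi''(y)$. Since $\phi$ has non-negative coefficients and is not of the form $\phi_0 + \phi_1 z$, we have $\phi''(y) > 0$ for $y \in (0, R)$, so the numerator is strictly decreasing on $(0, R)$, starts from $\phi_0 > 0$, and has the hypothesized unique zero $\tau$. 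Hence $\psi$ strictly increases on $[0, \tau]$ to its maximum value $\rho = \psi(\tau) = \tau/\phi(\tau)$, and decreases afterwards.

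From this I would derive both the radius of convergence and the singular behaviour of $C$ at $\rho$. The real analytic inverse function theorem yields an increasing analytic $C : [0, \rho) \to [0, \tau)$ with $C(z) \to \tau$ as $z \to \rho^-$; the complex analytic implicit function theorem then extends $C$ to an analytic function on the disk $|z| < \rho$, since $\psi'(y) \neq 0$ on $[0, \tau)$. Conversely, $C$ cannot extend analytically across $\rho$, for that would provide an analytic inverse of $\psi$ in a neighbourhood of $\tau$, contradicting $\psi'(\tau) = 0$. Thus $\rho$ is exactly the radius of convergence. For the singular expansion, Taylor-expanding $\psi$ at $\tau$ gives
\[ z - \rho = \tfrac{1}{2}\psi''(\tau)(C(z) - \tau)^2 + O\bigl((C(z) - \tau)^3\bigr), \]
with $\psi''(\tau) = -\tau\phi''(\tau)/\phi(\tau)^2$ by a short calculation that uses $\phi(\tau) = \tau\phi'(\tau)$. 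Choosing the branch of the square root for which $C(z) \to \tau$ from below as $z \to \rho^-$, and substituting $\rho = \tau/\phi(\tau)$, produces exactly $C(z) \sim \tau - \sqrt{2\phi(\tau)/\phi''(\tau)}\,\sqrt{1 - z/\rho}$.

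The coefficient asymptotic then follows from the standard transfer theorem of singularity analysis (\cite[Ch.~VI]{FlajoletSedgewick2008}): a singular contribution of the form $-K(1 - z/\rho)^{1/2}$ translates into $[z^n] C(z) \sim K\rho^{-n}/(2\sqrt{\pi n^3})$, and with $K = \sqrt{2\phi(\tau)/\phi''(\tau)}$ this yields the stated formula. The main technical obstacle is justifying the passage from a purely local singular expansion at $\rho$ to a global coefficient asymptotic. Two ingredients are required. First, one must rule out any other singularity of $C$ on the circle $|z| = \rho$; this is exactly where aperiodicity of $\phi$ enters, since a nontrivial period would force several dominant singularities symmetric around $\rho$. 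Second, $C$ must be continued analytically to a Delta-domain slit at $\rho$; this is handled by a Weierstrass-preparation-style argument, or equivalently by observing that near the branch point one can write $\rho - z = w^2 h(w)$ with $w = \tau - C(z)$ and $h$ analytic with $h(0) \neq 0$, so that $\sqrt{\rho - z}$ becomes a local analytic coordinate along which the transfer-theorem contour may be deformed.
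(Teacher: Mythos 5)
The paper does not prove this statement: it is quoted verbatim as Theorem VI.6 of Flajolet--Sedgewick \cite{FlajoletSedgewick2008} and used as a black box, so there is no internal proof to compare against. Your sketch is, in substance, the standard proof from that reference (invert $\psi(y)=y/\phi(y)$, locate the branch point at $\tau$ via the characteristic equation, extract the square-root singular expansion, and transfer), and the computations you give ($\psi''(\tau)=-\tau\phi''(\tau)/\phi(\tau)^2$, the constant $\sqrt{2\phi(\tau)/\phi''(\tau)}$, the $\Gamma(-1/2)$ transfer constant) are all correct. One small gloss: the complex implicit function theorem by itself only gives local analytic continuation of $C$, not analyticity on the full disk $|z|<\rho$; the standard way to close this is to note that $C$ has non-negative coefficients (by induction on the functional equation, or combinatorially), so by Pringsheim's theorem its dominant singularity lies on the positive real axis, and the monotonicity analysis of $\psi$ then pins it at $\rho$. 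You correctly identify the two remaining technical points (aperiodicity to exclude other singularities on $|z|=\rho$, and the $\Delta$-domain continuation needed for singularity analysis), so the argument is sound as a proof sketch.
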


\subsection{Multivariate generating functions and estimating parameters}

Until now, our generating functions had only a single variable, $z$, recording the size of the objects we were counting. 
We now consider \emph{multivariate} generating functions, where additional variables ($x$, $y$, \dots) record the value of other parameters of our objects. 
In our cases, we will consider at most two such parameters, which are numbers of certain ``substructures'' occurring in our objects. 
Namely, denoting $c_{n,k,m}$ the number of objects of size $n$ in the combinatorial class $\mathcal{C}$ 
such that the first parameter has value $k$ and the second has value $m$, the multivariate exponential generating function we consider is $C(z,x,y) = \sum_{n,k,m} \tfrac{c_{n,k,m}}{n!} z^nx^ky^m$. 

To continue our earlier example of binary trees, we could consider one additional parameter, 
which is the number of internal nodes. (Of course,we are aware that the number of internal nodes is always the number of leaves -- \emph{i.e.}, the size -- minus one; but we keep this example just to illustrate definitions and tools available.)
The coefficient of $z^nx^k$ in the generating function $B(z,x)$ is then the number of binary trees with $n$ leaves and $k$ internal nodes, divided by $n!$. 

The ``dictionary'' translating combinatorial specifications to equations satisfied by the generating function extends to multivariate series, and our earlier specification 
$\mathcal{B} =  \bullet$ \ $ \uplus $ \ {\begin{tikzpicture}[level distance=15mm,baseline=-10pt]
\begin{scope}[scale=0.4]
\tikzstyle{level 1}=[sibling distance=15mm]
\node[inner sep=0pt]{$\circ$}
  child {node[inner sep=0pt]{$\mathcal{B}$}}
  child {node[inner sep=0pt]{$\mathcal{B}$}}
;
\end{scope}
\end{tikzpicture}} 
gives $B(z,x) = z + x B(z,x)^2$. 

Here again, the Lagrange inversion formula may be used to derive a closed formula for the coefficients $c_{n,k,m}$. 
Indeed, assuming that our multivariate exponential generating function $C(z,x,y)$ satisfies an equation of the form $C(z,x,y) = z \phi(C(z),x,y)$ for $\phi(z,x,y) = \sum_{n\geq 0} \phi_n(x,y) z^n$ a formal power series such that $\phi_0 \neq 0$, then we have:
$$ 
\tfrac{c_{n,k,m}}{n!} = [z^nx^ky^m] C(z,x,y) = \frac{1}{n} [z^{n-1}x^ky^m] \phi(z,x,y)^n\textrm{.}
$$

Moreover, under some hypotheses, the following theorem (see \cite[Theorem 2.23]{Paperback}) 
allows to prove that the considered parameters are asymptotically normally distributed. 
The notation used in the statement of this theorem is as follows: 
if $F$ is a function of several variables, including $v$, $F_v$ denotes the partial derivative of $F$ with respect to $v$; 
as usual, $\mathbb{E}$ and $\mathbb{V}ar$ respectively denote expectation and variance; 
$\mathcal{N}(0,1)$ is the standard normal distribution; 
and $\xrightarrow{d}$ denotes convergence in distribution. 

\begin{theorem}\label{Drmota}
Assume that $ C(z, x) $ is a power series that is the (necessarily unique and analytic) solution of the functional equation $ C = F (C,z, x) $, where $ F (C,z, x) $ satisfies the following assumptions:
$F(C,z,x)$ is analytic in $C$, $z$ and $x$ around $0$, 
$F(C,0,x)=0$, 
$F(0,z,x) \neq 0$, 
and all coefficients $[z^n C^m] F(C,z,1)$ are real and non-negative. 

Assume in addition that the region of convergence of $F(C,z,x)$ is large enough 
for having non-negative solutions $z=z_0$ and $C=C_0$ of the system of equations 
\begin{align*} 
C&=F (C, z, 1)
\\
1&=F_{C} (C, z, 1)
\end{align*}
with $F_z(C_0,z_0,1) \neq 0$ and $F_{CC}(C_0,z_0,1) \neq 0$.

Then, if $X_n$ is a sequence of random variables such that 
\[
\mathbb{E} x^{X_n}= \dfrac{[z^n]C(z,x)}{[z^n]C(z,1)},
\]
then $X_n$ is asymptotically normally distributed. 

More precisely, setting\\
\qquad\qquad  $ \mu  =\dfrac{F_x}{z_{0} F_z} $
{\fontsize{8}{10}\selectfont
\begin{align*}
\sigma^{2}& = \mu +\mu^{2}+\dfrac{1}{z_{0}F_{z}^{3}F_{CC}}  \Big( F_{z}^{2}(F_{CC}F_{xx}-F_{Cx}^{2})-2F_{z}F_{x}(F_{CC}F_{zx}-F_{Cz}F_{Cx})+F_{x}^{2}(F_{CC}F_{zz}-F_{Cz}^{2}) \Big)
\end{align*}
}
where all partial derivatives are evaluated at the point $ (C_0 , z_0 , 1) $, we
have
\[
\mathbb{E} {X_n}= \mu n+ O(1) \;\;\;\;\;\; and \;\;\;\;\;\;\;\;\;\mathbb{V}ar{X_n}=\sigma^{2} n+O(1)
\]
and if $ \sigma^{2} > 0 $ then

\[
\dfrac{X_{n}-\mathbb{E} {X_n}}{\sqrt{\mathbb{V}ar{X_n}}}\xrightarrow{d} \mathcal{N}(0,1).
\]
\end{theorem}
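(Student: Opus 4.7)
The plan is to follow the standard strategy of smooth implicit-function schemas in analytic combinatorics: obtain a square-root singular expansion of $C(z,x)$ that is uniform for $x$ in a neighborhood of $1$, extract coefficient asymptotics by Flajolet--Odlyzko singularity analysis, and conclude asymptotic normality via Hwang's quasi-power theorem.

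First, I would apply the analytic implicit function theorem to the $2\times2$ system
\[
C - F(C,z,x) = 0, \qquad 1 - F_C(C,z,x) = 0
\]
at the base point $(C_0,z_0,1)$. The Jacobian with respect to $(C,z)$ there equals $-F_z(C_0,z_0,1)\cdot F_{CC}(C_0,z_0,1)$, which is nonzero by hypothesis. Hence there exist analytic functions $\tilde{C}(x)$ and $\rho(x)$, defined in a complex neighborhood of $x=1$, with $\tilde{C}(1)=C_0$, $\rho(1)=z_0$, and satisfying $\tilde{C}(x)=F(\tilde{C}(x),\rho(x),x)$ together with $F_C(\tilde{C}(x),\rho(x),x)=1$.

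Second, I would show that for each such $x$ the series $C(z,x)$ has a square-root type singularity at $z=\rho(x)$ and admits the uniform expansion
\[
C(z,x) = \tilde{C}(x) - h(x)\sqrt{1-z/\rho(x)} \; + \; O\!\left(1-z/\rho(x)\right),
\]
with $h(x)$ analytic and positive near $x=1$. This is obtained in the usual way by expanding $F$ at $(\tilde{C}(x),\rho(x),x)$ to second order in $C-\tilde{C}(x)$ and first order in $z-\rho(x)$, solving the resulting local quadratic equation, and picking the branch compatible with the non-negativity of the coefficients of $C(z,1)$. A standard compactness/aperiodicity argument ensures $\rho(x)$ is the unique dominant singularity of $C(\cdot,x)$ on its circle of convergence for $x$ real near $1$. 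Applying Flajolet--Odlyzko transfer then gives
\[
[z^n]C(z,x) \;\sim\; \frac{h(x)}{2\sqrt{\pi}}\,n^{-3/2}\,\rho(x)^{-n}
\]
uniformly for $x$ near $1$, so that $\mathbb{E}\, x^{X_n}=[z^n]C(z,x)/[z^n]C(z,1)$ satisfies a quasi-power expansion of the form $A(x)\,B(x)^n(1+o(1))$ with $B(x)=\rho(1)/\rho(x)$. Hwang's quasi-power theorem then delivers the claimed expansion of $\mathbb{E} X_n$ and $\mathbb{V}\mathrm{ar}\,X_n$ and, whenever the variance constant is positive, the central limit theorem, with
\[
\mu = -\frac{\rho'(1)}{\rho(1)}, \qquad \sigma^2 = -\frac{\rho''(1)}{\rho(1)} + \mu - \mu^2.
\]

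Finally, one identifies $\mu$ and $\sigma^2$ with the explicit expressions in the statement by implicit differentiation of the defining system. Differentiating $F(\tilde{C}(x),\rho(x),x)=\tilde{C}(x)$ and $F_C(\tilde{C}(x),\rho(x),x)=1$ once at $x=1$, using $F_C=1$ to simplify, yields a linear relation determining $\rho'(1)$ (from which $\mu = F_x/(z_0 F_z)$ pops out after a short calculation). A second differentiation produces a larger linear system in $\rho''(1)$ and $\tilde{C}''(1)$ involving all second-order partials of $F$; solving it, substituting into $-\rho''(1)/\rho(1) + \mu - \mu^2$, and simplifying yields the bracketed expression in the statement. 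I expect this last bookkeeping step to be the main obstacle: the conceptual machinery is standard, but tracking all nine second-order partials of $F$ and combining them without algebraic slip is the only genuinely delicate part of the argument.
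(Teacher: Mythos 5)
This statement is not proved in the paper at all: it is quoted verbatim (as Theorem~2.23 of the cited reference) and used as a black box, so there is no ``paper proof'' to compare against. Your outline is essentially the standard proof of that theorem --- singular implicit functions, a uniform square-root expansion of $C(z,x)$ at $z=\rho(x)$, singularity analysis, and Hwang's quasi-power theorem --- and the structural steps are right: the Jacobian of the system $(C-F,\,1-F_C)$ with respect to $(C,z)$ does equal $-F_z F_{CC}\neq 0$ at $(C_0,z_0,1)$, and the first implicit differentiation does give $\rho'(1)=-F_x/F_z$, hence $\mu=-\rho'(1)/\rho(1)=F_x/(z_0F_z)$ as claimed.

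There is, however, a concrete error in your variance formula. With $B(x)=\rho(1)/\rho(x)$ and $U(s)=\log B(e^s)$, the quasi-power theorem gives $\sigma^2=U''(0)=\frac{B'(1)}{B(1)}+\frac{B''(1)}{B(1)}-\bigl(\frac{B'(1)}{B(1)}\bigr)^2$, and substituting $B=\rho(1)/\rho$ yields
\[
\sigma^2=-\frac{\rho''(1)}{\rho(1)}+\mu+\mu^{2},
\]
not $-\rho''(1)/\rho(1)+\mu-\mu^{2}$ as you wrote. (Sanity check: if $\rho(x)=\rho(1)e^{-\mu(x-1)}$ then $U(s)=\mu(e^s-1)$, the Poisson-type case, so $\sigma^2$ must equal $\mu$; the correct formula gives $-\mu^2+\mu+\mu^2=\mu$, yours gives $\mu-2\mu^2$.) This matters because the explicit expression you are asked to recover begins with $\mu+\mu^{2}$, so carrying your version into the final bookkeeping step would produce a formula off by $2\mu^2$ and fail to match the statement. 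A secondary, smaller point: the theorem's hypotheses do not include aperiodicity, so the ``standard compactness/aperiodicity argument'' for uniqueness of the dominant singularity needs to be replaced by (or reduced to) the positivity-of-coefficients argument that Drmota actually uses; as written, you are invoking a hypothesis you do not have.
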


\subsection{Implementation and note about computations}
Some of the computations used to obtain the results of this paper were programmed in Maple. A companion Maple document is available from the authors webpage\footnote{at \url{http://user.math.uzh.ch/bouvel/publications/BouvelGambetteMansouri_Version2_WithoutMultipleEdges.mw}}.

We also point out to the interested reader that a first version of this article 
was  considering a variant of the model of level-$k$ phylogenetic networks, 
where multiple (\emph{i.e.} parallel) edges are allowed. 
The counting results for this alternate model of course differ (starting from level $2$), 
and can be found in~\cite{ArxivV2}, again with an associated Maple document\footnote{available at  \url{http://user.math.uzh.ch/bouvel/publications/BouvelGambetteMansouri_Version1_WithMultipleEdges.mw}}.
Similarly, these files can easily be used to adapt the computations in case other restrictions are imposed on the structure of level-1 or level-2 phylogenetic networks, for example if \emph{tiny cycles}, defined in~\cite{HIMSW2017} as bridgeless components with exactly three vertices, are not allowed.

\section{Counting unrooted level-1 networks}\label{sec:unrooted1}

\subsection{Generating function and exact enumeration formula}

Unrooted level-1 networks (also called unrooted galled trees) have been enumerated in~\cite{SempleSteel2006}. The enumeration does not only consider the number of leaves of the galled trees, but is refined according to two parameters: the number of cycles (\emph{i.e.}, level-1 generators) and the total number of edges which are part of a cycle (that we will call \emph{inner edges}). We only reproduce in Theorem~\ref{thm:GF_Unrooted_1} a simplified version of the results of~\cite{SempleSteel2006}, taking into account the number of leaves only.

\begin{theorem}
For any $n\geq 0$, let $g_n$ denote the number of unrooted level-1 networks with $(n+1)$ leaves, and denote by $G(z) = \sum_{n\geq 0} g_n \frac{z^n}{n!}$ the corresponding generating function.
Then $G$ satisfies the following equation:
$$ 
G(z) = z + \frac{1}{2} G(z)^2 + \frac{1}{2} \frac{G(z)^2}{1-G(z)}
\textrm{,}
$$
or equivalently 
$$
G(z) = z\phi(G(z)) \textrm{ with } \phi(z) = \frac{1}{1-\frac{1}{2}z(1+\frac{1}{1-z})}
\textrm{.}
$$

Moreover, for any $n\geq 0$, let $g_n$ denote the number of unrooted level-1 networks with $(n+1)$ leaves. We have:
\begin{equation}
g_n = \frac{(2n-2)!}{2^{n-1}(n-1)!} + \sum_{1 \leq i \leq k \leq n-1} \frac{(n+i-1)! (n+k-i-2)!}{k! (k-1)! (i-k)! (n-i-1)!}2^{-i}
\textrm{.}\label{eq:enum_Unrooted_1}
\end{equation}
\label{thm:GF_Unrooted_1}
\end{theorem}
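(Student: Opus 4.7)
The plan is to derive the functional equation from the combinatorial specification of unrooted level-$1$ networks recalled in Section~\ref{sec:phylo_intro}, and then extract the closed-form enumeration via the Lagrange inversion formula. I view $G(z)$ as the exponential generating function for unrooted level-$1$ networks pointed at one distinguished leaf $l_\#$, so that the coefficient of $z^n/n!$ counts networks on $n+1$ leaves, with the $n$ non-distinguished leaves bearing labels from $\{1,\dots,n\}$. This pointing is convenient because the decomposition of unrooted level-$k$ networks recalled in Section~\ref{sec:phylo_intro} is precisely phrased in terms of the bridgeless component containing the neighbor of a chosen leaf.

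Writing $v$ for the unique neighbor of $l_\#$ and $B$ for the bridgeless component containing $v$, the three cases recalled in Section~\ref{sec:phylo_intro} translate as follows. First, if $B$ is a single degree-$1$ vertex, then $N$ consists of only two leaves, contributing a term $z$. Second, if $B$ is a single degree-$3$ vertex, then $v$ carries two cut-edges to two unordered pointed level-$1$ subnetworks on disjoint label sets, contributing $\tfrac{1}{2}G(z)^2$ by the dictionary (Proposition~\ref{dictionary}). Third, if $B$ is a cycle of length $k+1$ for some $k \geq 2$, then $v$ lies on this cycle while each of the other $k$ vertices carries a cut-edge to a pointed level-$1$ subnetwork; once $v$ is fixed, the only remaining nontrivial symmetry of the cycle is the reflection through $v$, and since labeled sequences of length $k \geq 2$ of objects with pairwise disjoint nonempty label sets cannot coincide with their reverse, quotienting by this reflection yields $\tfrac{1}{2}\sum_{k \geq 2} G(z)^k = \tfrac{1}{2}\cdot\tfrac{G(z)^2}{1-G(z)}$. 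Summing the three contributions yields the stated equation, and an elementary algebraic rearrangement rewrites it as $G(z) = z\,\phi(G(z))$ with the claimed $\phi$.

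For the exact formula, I would apply the Lagrange inversion formula, which gives $g_n = n! \cdot [z^n]G(z) = (n-1)! \cdot [z^{n-1}]\phi(z)^n$. Writing $\phi(z) = (1-u(z))^{-1}$ with $u(z) = z(2-z)/(2(1-z))$, I would expand $\phi(z)^n = \sum_{j \geq 0}\binom{n+j-1}{j}u(z)^j$, then apply the binomial theorem to $(2-z)^j$ and the negative binomial series to $(1-z)^{-j}$. Extracting the coefficient of $z^{n-1}$ under the constraint $j+i+m = n-1$ (with $i$ and $m$ indexing the last two expansions) produces a triple sum. The main obstacle lies in the ensuing simplification: isolating the summand $\tfrac{(2n-2)!}{2^{n-1}(n-1)!} = (2n-3)!!$, which is the number of unrooted binary phylogenetic trees on $n+1$ leaves and corresponds to the tree-like contribution, and collapsing the remaining terms via Vandermonde-type binomial identities onto the double sum of~(\ref{eq:enum_Unrooted_1}). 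The combinatorial interpretation of the first summand, together with the refined multivariate statement of~\cite{SempleSteel2006}, provides a useful sanity check throughout the calculation.
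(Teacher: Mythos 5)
Your derivation of the functional equation is correct and is essentially the paper's own argument: the same device of pointing the network at a distinguished leaf $\#$, the same three cases according to the bridgeless component containing its neighbour, and the same symmetry factors $\tfrac12$ for the unordered pair and for the unoriented sequence around a cycle. That part needs no changes.

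The second half, however, has a genuine gap. You set up Lagrange inversion correctly, $g_n=(n-1)!\,[z^{n-1}]\phi(z)^n$, but you then choose to write $\phi=(1-u)^{-1}$ with $u(z)=\tfrac{z(2-z)}{2(1-z)}$ and to expand $(2-z)^j$ by the binomial theorem. This produces a \emph{triple} sum with alternating signs coming from the powers of $(-z)$, and you leave the collapse of that signed sum onto the two-index formula~\eqref{eq:enum_Unrooted_1} as an acknowledged ``obstacle,'' to be handled by unspecified Vandermonde-type identities. That collapse is precisely the content of the claimed formula, so the proof is not complete as written; in particular the summand $\tfrac{(2n-2)!}{2^{n-1}(n-1)!}$ does not appear as an isolated term in your expansion but would only emerge after the cancellation you have not carried out. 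The paper avoids the issue by a different, sign-free organization of the same expansion: after $\phi(z)^n=\sum_{i\geq 0}\binom{n+i-1}{i}\bigl(\tfrac12 z(1+\tfrac{1}{1-z})\bigr)^i$, it applies the binomial theorem to $\bigl(1+\tfrac{1}{1-z}\bigr)^i=\sum_{k=0}^{i}\binom{i}{k}(1-z)^{-k}$, keeping $1$ and $(1-z)^{-1}$ as the two binomial terms. The $k=0$ contribution immediately yields the first summand of~\eqref{eq:enum_Unrooted_1}, and the $k\geq 1$ contributions, expanded via Eq.~\eqref{eq:classical_expansion}, land directly on the double sum with all terms positive, so no cancellation or hypergeometric summation is needed. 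Either exhibit explicitly the identity that reduces your signed triple sum to~\eqref{eq:enum_Unrooted_1}, or switch to this grouping. (A side remark useful for your sanity checks: the index condition in the displayed formula should read $1\leq k\leq i\leq n-1$, as in the final line of the paper's computation, since $(i-k)!$ must be the factorial of a non-negative integer.)
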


Notice that even if the formulas seem different, Eq.~\eqref{eq:enum_Unrooted_1} can be recovered from Theorem~4 of~\cite{SempleSteel2006} by summing over $k$ and $m$ and performing the change of variable $m = n-i+3k-1$. 
The first values of $g_n$ have been included in Table~\ref{TableNumbers}.

\begin{proof}
We recall the main steps of the proofs of Theorem~\ref{thm:GF_Unrooted_1} given in~\cite{SempleSteel2006}. To prepare the ground for future proofs, we emphasize their embedding in the context we presented in Section~\ref{sec:GF}.

\smallskip

Since counting rooted objects is far easier that counting unrooted objects, we establish a bijective correspondence between unrooted level-1 networks, and a rooted version of these networks, that we call \emph{pointed} level-1 networks. 
Pointed level-1 networks on a set of taxa $X$ are simply 
unrooted level-1 networks on the set of taxa $X \uplus \{\#\}$, 
where we declare that the leaf labeled by $\{\#\}$ is the ``root'' of the network. 
This provides a bijection between unrooted level-1 networks on the set of taxa $X \uplus \{\#\}$ and pointed level-1 networks on $X$, that have a root labeled by $\{\#\}$.
Therefore, there are as many unrooted level-1 networks on the set of taxa $X \uplus \{\#\}$ as pointed level-1 networks on $X$ rooted in a leaf labeled by $\# \notin X$. Hence $g_n$ is the number of pointed level-1 networks with $n$ leaves in addition to the root.

In a pointed level-1 network $N$ (with at least two leaves), we consider the other extremity of the edge to which the root belongs. This vertex may belong to a cycle or not. In the latter case, $N$ is simply described as an unordered pair of two pointed level-1 networks. In the former case, it is described as a non-oriented sequence of at least two pointed level-1 networks. Taking into account the trivial pointed level-1 network with one leaf, a specification for the pointed level-1 networks is therefore 
the one shown in Fig.~\ref{fig:Galled}, where an arrow labeled by \emph{sym} indicates that there is a symmetry w.r.t. the vertical axis to take into account, and the dashed edge corresponds to an edge or a path with internal vertices that are incident with cut-edges, themselves identified with edges of other pointed level-1 networks, the vertex lying on the cycle being identified with a leaf of corresponding network. 
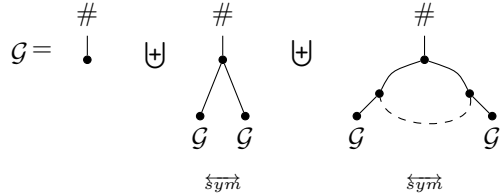
\begin{figure}[ht]
	\begin{center}
\begin{tabular}{cccc}
\begin{tikzpicture}[baseline=(top.base)]
\begin{scope}[scale=0.3]

		\draw (-6,4.5) node [] (ppointed) {\#};
		
        \draw (-3cm,2.7cm) node[inner sep=1.5pt,circle] (5) {$\biguplus$};
		\draw (-6cm,2.5cm) node[inner sep=1pt,circle,fill,draw] (top) {};
		\draw (-8cm,2.9cm) node[inner sep=1.5pt,circle] (8) {$=$};
		\draw (-9cm,2.8cm) node[inner sep=1.5pt,circle] (9) {${\mathcal G}$};
		\draw (ppointed) -- (top);

\draw (0,4.5) node [] (ppointed2) {\#};
\draw (0,2.5) node [circle, inner sep=1,fill,draw] (top2) {};
		\draw (-1cm,0cm) node[circle,inner sep=1,fill,draw] (2) {};
		\draw (1cm,0cm) node[circle,inner sep=1,fill,draw] (3) {};
		\draw (-1cm,-1cm) node[inner sep=1.5pt,circle] (4) {${\mathcal G}$};
		\draw (1cm,-1cm) node[inner sep=1.5pt,circle] (4) {${\mathcal G}$};
		\draw (0cm,-3cm) node[inner sep=1.5pt,circle] (5) {$\tiny{\overleftrightarrow{{sym}}}$};
    	\draw (top2)--(2);
		\draw (top2)--(3);  
		\draw (ppointed2) -- (top2);

\end{scope}
\end{tikzpicture}
&
$\biguplus$
&
\begin{tikzpicture}[baseline=(top.base)]
\begin{scope}[scale=0.3]
\draw (2,4.5) node [] (pointed) {\#};
\draw (2,2.5) node [circle, inner sep=1,fill,draw] (top) {};
\draw (0,1) node [circle, inner sep=1,fill,draw] (left) {};
\draw (4,1) node [circle, inner sep=1,fill,draw] (right) {};
\draw (5,-1) node {$\mathcal{G}$};
\draw (-1,0) node [circle, inner sep=1,fill,draw] (bleft) {};
\draw (-1,-1) node {$\mathcal{G}$};
\draw (5,0) node [circle, inner sep=1,fill,draw] (bright) {};
\draw (pointed) -- (top);
\draw (top) .. controls(0.5,2) .. (left);
\draw (top) .. controls(3.5,2) .. (right);
\draw  [black,dashed] (right) to[out=-80,in=-80] (left);
\draw (right) -- (bright);
\draw (left) -- (bleft);
\draw (2cm,-3cm) node[inner sep=1.5pt,circle] (16) {$\tiny{\overleftrightarrow{{sym}}}$};
\end{scope}
\end{tikzpicture}
\end{tabular}
	\end{center}
	\caption{The combinatorial specification for unrooted level-1 networks (a.k.a. galled trees).	\label{fig:Galled}}
\end{figure}

Thanks to the ``dictionary'', the generating function therefore satisfies $G(z) = z + \frac{1}{2} G(z)^2 + \frac{1}{2} \frac{G(z)^2}{1-G(z)}$ as claimed by Theorem~\ref{thm:GF_Unrooted_1}. The second statement about $G(z)$ in Theorem~\ref{thm:GF_Unrooted_1} is obtained by simple algebraic manipulations.

\smallskip

From $G(z) = z\phi(G(z))$, where $\phi(z) = \frac{1}{1-\frac{1}{2}z(1+\frac{1}{1-z})}$, we can apply Lagrange inversion to find $g_n$. Indeed, $g_n = n! [z^n] G(z) = (n-1)! [z^{n-1}] \phi(z)^n$.

Recall the following development of $(1-z)^{-n}$, for any $n\geq 1$, which will be used here and several times later on:
\begin{equation}
\left( \frac{1}{1-z} \right)^n = \sum_{i \geq 0} {{n+i-1} \choose {i}} z^i \textrm{.}
\label{eq:classical_expansion}
\end{equation}

Applying this identity twice and the binomial theorem, we get that 
\begin{align*}
\phi(z)^n = & \sum_{i \geq 0} {{n+i-1} \choose {i}} \left( \frac{1}{2}z\left(1+\frac{1}{1-z}\right)\right)^i \\
 = & \sum_{i \geq 0} {{n+i-1} \choose {i}} \left( 1+ \sum_{k=1}^i \sum_{p\geq 0} {{i} \choose {k}} {{k+p-1} \choose {p}} z^p \right) \frac{1}{2^i} z^i \\
 = & \sum_{i \geq 0} {{n+i-1} \choose {i}} \frac{z^i}{2^i} + \sum_{i \geq 0} \sum_{k=1}^i \sum_{p\geq 0} {{n+i-1} \choose {i}} {{i} \choose {k}} {{k+p-1} \choose {p}} \frac{z^{i+p}}{2^i}
\textrm{.}
\end{align*}

It follows that
\begin{align*}
[z^{n-1}] \phi(z)^n = & {{2n-2} \choose {n-1}} \frac{1}{2^{n-1}} + \sum_{i = 0}^{n-1} \sum_{k=1}^i \frac{1}{2^i} {{n+i-1} \choose {i}} {{i} \choose {k}} {{n+k-i-2} \choose {n-i-1}} \\
\textrm{ and } g_n = & \frac{(2n-2)!}{2^{n-1}(n-1)!} + \sum_{1 \leq k \leq i \leq n-1} \frac{(n+i-1)! (n+k-i-2)!}{k! (k-1)! (i-k)! (n-i-1)!}2^{-i}
\textrm{.} 
\end{align*}
\end{proof}

\subsection{Asymptotic evaluation}
From Theorem~\ref{thm:GF_Unrooted_1}, we can furthermore derive an asymptotic evaluation of the number $g_n$
of unrooted level-1 networks on $(n+1)$ leaves, using Theorem~\ref{thm:singular_inversion}.

\begin{proposition}
The number $g_n$ of unrooted level-1 networks on $(n+1)$ leaves is asymptotically equivalent to $c_1 \cdot c_2^n \cdot n^{n-1}$ for constants $c_1$ and $c_2$ such that $c_1 \approx 0.20748$ and $c_2 \approx 1.89004$.
\label{prop:asym_Unrooted_1}
\end{proposition}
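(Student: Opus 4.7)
The plan is to apply the Singular Inversion Theorem (Theorem~\ref{thm:singular_inversion}) to the equation $G(z) = z\phi(G(z))$ with $\phi(z) = \frac{1}{1-\frac{1}{2}z(1+\frac{1}{1-z})}$ established in Theorem~\ref{thm:GF_Unrooted_1}, and then use Stirling's formula to convert the resulting estimate for $[z^n]G(z)$ into one for $g_n = n!\cdot [z^n]G(z)$.

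First I would verify the hypotheses of the theorem. Rewriting
\[
\phi(z) = \frac{2(1-z)}{z^2-4z+2}
\]
makes clear that the radius of convergence at $0$ is $R = 2-\sqrt{2}$. Writing $u(z) = \frac{1}{2}z(1+\frac{1}{1-z}) = z + \sum_{k\geq 2}\frac{z^k}{2}$, the identity $\phi(z) = \sum_{n\geq 0} u(z)^n$ shows that every $\phi_n$ is strictly positive; in particular $\phi_0 = 1 \neq 0$, the series is aperiodic (cf. the footnote in Theorem~\ref{thm:singular_inversion}), and $\phi \neq \phi_0 + \phi_1 z$. Also $G(0) = 0$.

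Next I would locate the root of the characteristic equation $\phi(\tau) - \tau\phi'(\tau) = 0$ inside $(0,R)$. A short calculation of $\phi'$ and clearing of denominators shows that this characteristic equation is equivalent to the cubic
\[
2\tau^3 - 7\tau^2 + 8\tau - 2 = 0,
\]
whose left-hand side equals $-2$ at $\tau=0$ and is positive at $\tau = 2-\sqrt{2}$, so that the intermediate value theorem provides a (necessarily unique, by Theorem~\ref{thm:singular_inversion}) root $\tau \in (0, R)$. Setting $\rho = \tau/\phi(\tau)$, Theorem~\ref{thm:singular_inversion} then yields
\[
[z^n]G(z) \;\sim\; \sqrt{\frac{\phi(\tau)}{2\phi''(\tau)}}\;\frac{\rho^{-n}}{\sqrt{\pi n^3}}.
\]

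Finally, multiplying by $n!$ and applying Stirling's $n! \sim \sqrt{2\pi n}(n/e)^n$, the factors of $\sqrt{2\pi}$ combine to give
\[
g_n \;\sim\; \sqrt{\frac{\phi(\tau)}{\phi''(\tau)}}\cdot n^{n-1}\cdot (e\rho)^{-n},
\]
so that $c_2 = 1/(e\rho) = \phi(\tau)/(e\tau)$ and $c_1 = \sqrt{\phi(\tau)/\phi''(\tau)}$. Solving the cubic (numerically or via Cardano) gives $\tau \approx 0.34272$, whence $\rho \approx 0.19464$, and plugging in yields the claimed constants $c_2 \approx 1.89004$ and $c_1 \approx 0.20748$. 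There is no real conceptual obstacle here: the entire argument is a direct application of the theorem, and the only ``difficulty'' is the routine computation of $\phi''(\tau)$ and the numerical evaluation of $c_1,c_2$, which is carried out in the companion Maple file mentioned in Section~\ref{sec:GF}.
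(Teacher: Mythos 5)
Your proposal is correct and follows essentially the same route as the paper's proof: apply the Singular Inversion Theorem to $G(z)=z\phi(G(z))$ after checking its hypotheses, then convert $[z^n]G(z)$ to $g_n$ via Stirling's formula. The only (welcome) difference is that you make the verification of the characteristic equation explicit by reducing it to the cubic $2\tau^3-7\tau^2+8\tau-2=0$ and invoking the intermediate value theorem, where the paper simply defers to a numerical computation in the companion Maple worksheet.
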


\begin{proof}
Recall that $G(z)$ satisfies $G(z) = z\phi(G(z))$, where 
$\phi(z) = \frac{1}{1-\frac{1}{2}z(1+\frac{1}{1-z})}$. 
Equivalently, this can be rewritten as $\phi(z) = \frac{2-2z}{z^2 -4z+2}$. 
So, $\phi(z)$ is a rational fraction, whose pole with smallest absolute value is $2-\sqrt{2} \approx 0.5858$. 
As such, $\phi(z)$ is analytic at $0$, with radius of convergence $R=2-\sqrt{2}$. 
Moreover, owing to footnote~\ref{footnote:aperiodic}, $\phi(z)$ is aperiodic. 
Finally, the characteristic equation $\phi(z)-z\phi'(z)=0$ can be numerically solved 
(see companion Maple worksheet), showing that it admits a unique solution in the disk of convergence of $\phi$, namely $\tau \approx 0.34270$.
Therefore, the hypotheses of Theorem~\ref{thm:singular_inversion} are all satisfied, 
and denoting $\rho = \frac{\tau}{\phi(\tau)} \approx 0.19464$, Theorem~\ref{thm:singular_inversion} gives: 
$$[z^n]G(z) \sim \sqrt{\frac{\phi(\tau)}{2\phi''(\tau)}} \frac{\rho^{-n}}{\sqrt{\pi n^3}}\textrm{.}
$$

Using the Stirling estimate of the factorial $n! \sim \left(\frac{n}{e}\right)^n \sqrt{2\pi n}$, we get:
$$g_n \sim \left(\frac{n}{e}\right)^n \sqrt{2 \pi n} \sqrt{\frac{\phi(\tau)}{2\phi''(\tau)}} \frac{\rho^{-n}}{\sqrt{\pi n^3}} \sim \frac{n^{n-1}}{(e\rho)^n}\sqrt{\frac{\phi(\tau)}{\phi''(\tau)}}\textrm{.}
$$

Replacing $\tau$ and $\rho$ by their numerical approximations, we obtain the announced result. 
\end{proof}

\subsection{Refined enumeration and asymptotic distribution of parameters}

From the specification of pointed level-$1$ networks seen in the proof of Theorem~\ref{thm:GF_Unrooted_1}, 
it follows easily, as done in~\cite{SempleSteel2006}, 
that the multivariate generating function $G(z,x,y) = \sum_{n,k,m}\tfrac{g_{n,k,m}}{n!}z^n x^k y^m $,
where $g_{n,k,m}$ is the number of unrooted level-$1$ networks with $n+1$ leaves, $k$ cycles, and $m$ inner edges,
satisfies 
\begin{equation}
G(z,x,y) = z + \frac12 G(z,x,y)^2 + \frac12 x y^3 \frac{G(z,x,y)^2}{1-y G(z,x,y)}.
\label{eq:unrooted1multivariate}
\end{equation}
This equation can be rewritten as 
$G(z,x,y) = z \phi(G(z,x,y),x,y)$
where $\phi$ is defined by $\phi(z,x,y) = \tfrac{1}{1-\frac12 z \left(1+ \frac{xy^3}{1-yz}\right)}$. As done in~\cite{SempleSteel2006}, we can apply the Lagrange inversion formula to obtain an explicit expression for $g_{n,k,m}$ -- see \cite[Thm. 4]{SempleSteel2006}. 

Using Theorem~\ref{Drmota}, 
the above equation may also be used to prove that the parameters ``number of cycles'' and ``number of inner edges'' are both asymptotically normally distributed. 

\begin{proposition}
Let $X_n$ (resp. $Y_n$) be the random variable counting the number of cycles (resp. inner edges) 
in unrooted level-$1$ networks with $n+1$ leaves. 
Both $X_n$ and $Y_n$ are asymptotically normally distributed, 
and more precisely, we have 
\[
\mathbb{E} {X_n}= \mu_X n+ O(1), \;\;\;\;\;\; \mathbb{V}ar{X_n}=\sigma_X^{2} n+O(1) \;\;\;\; and \;\;\;\;\; \dfrac{X_{n}-\mathbb{E} {X_n}}{\sqrt{\mathbb{V}ar{X_n}}}\xrightarrow{d} \mathcal{N}(0,1),
\]
\[
\mathbb{E} {Y_n}= \mu_Y n+ O(1), \;\;\;\;\;\; \mathbb{V}ar{Y_n}=\sigma_Y^{2} n+O(1) \;\;\;\; and \;\;\;\;\; \dfrac{Y_{n}-\mathbb{E} {Y_n}}{\sqrt{\mathbb{V}ar{Y_n}}}\xrightarrow{d} \mathcal{N}(0,1),
\]
where $\mu_X \approx 0.46 $, $\sigma_X^2 \approx 0.18 $, $\mu_Y \approx 1.61 $ and $\sigma_Y^2 \approx 1.44 $. 
\label{prop:normaldistribunrooted1}
\end{proposition}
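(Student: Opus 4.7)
The plan is to apply Theorem~\ref{Drmota} twice, once for each of the parameters $X_n$ (cycles) and $Y_n$ (inner edges), by suitably specializing the multivariate functional equation~\eqref{eq:unrooted1multivariate}.

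First, for the cycle count, I would set $y=1$ in~\eqref{eq:unrooted1multivariate} to get the bivariate equation $G(z,x) = F_X(G(z,x),z,x)$ with
\[
F_X(C,z,x) = z + \tfrac{1}{2}C^2 + \tfrac{x}{2}\cdot\frac{C^2}{1-C}\textrm{.}
\]
Analogously, setting $x=1$ gives $G(z,y) = F_Y(G(z,y),z,y)$ with
\[
F_Y(C,z,y) = z + \tfrac{1}{2}C^2 + \tfrac{y^3}{2}\cdot\frac{C^2}{1-yC}\textrm{.}
\]
In both cases, the bivariate specialization has the form required by Theorem~\ref{Drmota}: each $F$ is a rational function analytic near the origin (the only singularity, $C=1$ or $yC=1$, is away from $0$), all Taylor coefficients at $(C,z,u)=(0,0,1)$ are non-negative, and the generating function $G(z,x,y)$ itself is uniquely determined by the specification as a power series with non-negative coefficients (and in particular is analytic at $0$).

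Next, I would check the existence of a valid solution $(C_0,z_0)$ to the system $C=F(C,z,1)$ and $1=F_C(C,z,1)$. Since setting $x=1$ or $y=1$ in either of our $F$'s reduces to the univariate $\phi$ of Theorem~\ref{thm:GF_Unrooted_1} (via $F(C,z,1) = z\phi(C)$, so the characteristic equation $F_C=1$ is equivalent to $z\phi'(C)=1$, which combined with $C=z\phi(C)$ gives $\phi(C)=C\phi'(C)$), the values $C_0 = \tau \approx 0.34270$ and $z_0 = \rho \approx 0.19464$ already identified in the proof of Proposition~\ref{prop:asym_Unrooted_1} can be reused directly. One then verifies numerically (companion Maple worksheet) that $F_z(C_0,z_0,1)\ne 0$ (this is immediate, as $\partial_z F = 1$) and $F_{CC}(C_0,z_0,1)\ne 0$ (both $F_X$ and $F_Y$ have a genuinely non-linear $C$-dependence), so all hypotheses of Theorem~\ref{Drmota} hold.

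The final step is mechanical: compute $\mu$ and $\sigma^2$ from the closed-form formulas of Theorem~\ref{Drmota} by evaluating the partial derivatives of $F_X$ and $F_Y$ at $(C_0,z_0,1)$. For $F_X$ one has in particular $F_x = \tfrac{1}{2}\tfrac{C^2}{1-C}$, while for $F_Y$ the derivatives involve products of powers of $y$ and $(1-yC)^{-1}$, so the algebraic expressions are a bit heavier. Substituting the numerical values of $\tau$ and $\rho$ then yields the stated approximations $\mu_X\approx 0.46$, $\sigma_X^2\approx 0.18$, $\mu_Y\approx 1.61$, $\sigma_Y^2\approx 1.44$; crucially, both variances come out strictly positive, which licenses the central limit conclusion $\tfrac{X_n-\mathbb{E}X_n}{\sqrt{\mathbb{V}\mathrm{ar}\,X_n}}\xrightarrow{d}\mathcal{N}(0,1)$ and similarly for $Y_n$. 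The main obstacle is not conceptual but computational: carefully expanding the nine partial derivatives (especially the mixed ones $F_{Cx},F_{zx},F_{Cz}$ etc.\ for $F_Y$, which mix the factor $y^3$ and $(1-yC)^{-1}$) and confirming $\sigma^2>0$; I would delegate this symbolic manipulation to Maple, as already done elsewhere in the paper.
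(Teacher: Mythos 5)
Your overall strategy is exactly the one the paper follows: specialize the multivariate equation~\eqref{eq:unrooted1multivariate} at $y=1$ (resp.\ $x=1$), apply Theorem~\ref{Drmota}, observe that the critical point $(C_0,z_0)$ coincides with $(\tau,\rho)$ from Proposition~\ref{prop:asym_Unrooted_1}, and delegate the evaluation of the nine partial derivatives to Maple. The identification of the system $C=F(C,z,1)$, $1=F_C(C,z,1)$ with the characteristic equation $\phi(C)=C\phi'(C)$ is correct and matches the paper.

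There is, however, one concrete defect in your verification of the hypotheses. Theorem~\ref{Drmota} as stated requires $F(C,0,x)=0$, i.e.\ the right-hand side must vanish identically when the size variable $z$ is set to $0$. Your additive choices
\[
F_X(C,z,x) = z + \tfrac{1}{2}C^2 + \tfrac{x}{2}\,\frac{C^2}{1-C},
\qquad
F_Y(C,z,y) = z + \tfrac{1}{2}C^2 + \tfrac{y^3}{2}\,\frac{C^2}{1-yC},
\]
do not satisfy this: $F_X(C,0,x)=\tfrac{1}{2}C^2+\tfrac{x}{2}\tfrac{C^2}{1-C}\not\equiv 0$. You assert that ``the bivariate specialization has the form required by Theorem~\ref{Drmota}'' but never check this hypothesis, and with your $F$ it fails. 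The repair is immediate and is what the paper does: rewrite the same functional equation in the multiplicative form $C = z\,\phi(C,x)$, i.e.\ take $F(C,z,x) = z\cdot\bigl(1-\tfrac{1}{2}C(1+\tfrac{x}{1-C})\bigr)^{-1}$ (and the analogous expression in $y$), for which $F(C,0,x)=0$ holds trivially and the remaining hypotheses (analyticity, $F(0,z,x)=z\neq 0$, non-negativity of coefficients, $F_z\neq 0$, $F_{CC}\neq 0$) are checked as you describe. Since $\mu=F_x/(z_0F_z)$ and $\sigma^2$ are intrinsic to the generating function, the numerical values you would obtain are unaffected, but the proof as written invokes the theorem on a function that does not meet its stated assumptions.
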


\begin{proof}
Consider first $X_n$. 
Defining $G(z,x) := G(z,x,1)$, it holds that 
$$
\mathbb{E} x^{X_n}= \dfrac{[z^n]G(z,x)}{[z^n]G(z,1)}.
$$
It follows from the equation for $G(z,x,y)$ that $G(z,x) = F(G(z,x),z,x)$, 
where $F$ is defined by $F(G,z,x) = z \frac{1}{1-\frac{1}{2}G\left(1+\frac{x}{1-G}\right)}$. 
Being rational, we see immediately that $F(G,z,x)$ is analytic in $G$, $z$ and $x$ around $0$. 
Moreover, performing the substitution $z=0$ (resp. $G=0$) gives $F(G,0,x) = 0$ (resp. $F(0,z,x) = z$, which is not identically $0$). 
Finally, it is readily checked that $F$ satisfies $[z^nG^m]F(G,z,1) \geq 0$ for all $n,m$
(noting for instance that $F$ is obtained using several times the quasi-inverse operator $A \mapsto \frac{1}{1-A}$, which has a combinatorial counterpart, as seen in Proposition~\ref{dictionary}).
In addition, we can determine numerically that
the system 
\begin{align*}
G=&F(G,z,1) \\
1=&F_G(G,z,1)
\end{align*}
admits a solution $(G_0,z_0)$ such that $G_0 \approx 0.3427$ and $z_0 \approx 0.1946$, 
which satisfies the hypothesis of Theorem~\ref{Drmota} 
(see the companion Maple worksheet to determine the solution and to check it satisfies the required hypotheses).
The result then follows from Theorem~\ref{Drmota}, 
and the numerical estimates of $\mu_X$ and $\sigma_X^2$ 
are obtained plugging the numerical estimates for $G_0$ and $z_0$ into the explicit formulas given  by Theorem~\ref{Drmota} (see again companion Maple worksheet for details). The proof for $Y_n$ follows the exact same steps, considering this time $G(z,y) := G(z,1,y)$ instead, and adjusting the definition of $F$ accordingly. 
		As expected, the solution $(G_0,z_0)$ of the associated system is the same as above.
\end{proof}

\begin{remark}
\label{rk:usingThm}
In the above proof of Proposition~\ref{prop:asym_Unrooted_1} (resp. Proposition~\ref{prop:normaldistribunrooted1}), 
we have provided some details on how Theorem~\ref{thm:singular_inversion} (resp. Theorem~\ref{Drmota}) was used and on how its hypotheses were checked. 
This is omitted in later proofs using Theorem~\ref{thm:singular_inversion} (see Propositions~\ref{prop:asym_Rooted_1}, \ref{prop:asym_Unrooted_2} and \ref{asy_rooted2}) 
or Theorem~\ref{Drmota} (see Propositions~\ref{prop:normaldistribunrooted1}, \ref{prop:normaldistribrooted2} and \ref{prop:normaldistribunrooted2}),  
since they work following the exact same steps. 
Note also that all numerical resolutions of equations are done in the companion Maple worksheet.

\end{remark}

\section{Counting rooted level-1 networks}\label{sec:rooted1}

\subsection{Combinatorial specification and generating function}

As for unrooted level-1 networks, we start by a combinatorial specification that describes rooted level-1 networks (also called rooted galled trees). Because every cycle in a rooted level-1 network not only has a tree vertex above all other vertices of the cycle, but also a reticulation vertex which is below all other  vertices of the cycle, notice that these objects are different from the pointed level-1 networks that we considered in the proof of Theorem~\ref{thm:GF_Unrooted_1}.

Recall that each cycle of a level-1 network has strictly more than one outgoing arc (otherwise there would be an infinite number of level-1 networks on $n$ taxa).

Let us denote by $\mathcal{R}$ the set of rooted level-1 networks. The size of a network of $\mathcal{R}$ is the number of its leaves. 
Distinguishing on the level (0 or 1) of the bridgeless component containing its root, 
a network of $\mathcal{R}$ is described in exactly one of the following ways. It may be:
\begin{itemize}
 \item a single leaf (case $0a$);
 \item a binary root vertex with two children that are roots of networks of $\mathcal{R}$, whose left-to-right order is irrelevant (case $0b$);
 \item a cycle containing the root with at least two outgoing cut arcs leading to networks of $\mathcal{R}$. This last possibility splits into two subcases, since the reticulation vertex of the cycle may be a child of the root:
\begin{itemize}
\item a cycle whose reticulation vertex is attached to a network of $\mathcal{R}$, is a child of the root and is the lowest vertex of a path coming from the root, where a sequence of at least one network of $\mathcal{R}$ is attached (case $1a$);
\item a cycle whose reticulation vertex is attached to a network of $\mathcal{R}$, and such that a sequence of at least one network of $\mathcal{R}$ is attached to each path of this cycle, the left-to-right order of these two paths being irrelevant (case $1b$).
\end{itemize}
\end{itemize}

The specification for $\mathcal{R}$ is therefore the one given in Fig.~\ref{rootedlevel1}. 
\begin{figure}[h]
	\begin{center}
		\begin{tikzpicture}[
		scale=0.7,
		level/.style={thick},
		virtual/.style={thick,densely dashed},
		trans/.style={thick,<->,shorten >=2pt,shorten <=2pt,>=stealth},
		classical/.style={thin,double,<->,shorten >=4pt,shorten <=4pt,>=stealth}
		]
		\draw (-4cm,0cm) node[circle,inner sep=1,fill,draw] (1) {};
		\draw (-4.5cm,-1cm) node[circle,inner sep=1,fill,draw] (2) {};
		\draw (-3.5cm,-1cm) node[circle,inner sep=1,fill,draw] (3) {};
		\draw (-4.5cm,-1.5cm) node[inner sep=1.5pt,circle] (4) {${\mathcal R}$};
		\draw (-3.5cm,-1.5cm) node[inner sep=1.5pt,circle] (4) {${\mathcal R}$};
		\draw (-4cm,-2cm) node[inner sep=1.5pt,circle] (5) {$\tiny{\overleftrightarrow{{sym}}}$};
    	\draw (1)--(2);
		\draw (1)--(3);

		\draw (-2cm,0cm) node[circle,inner sep=1,fill,draw] (1) {};
		\draw (-2cm,-1.5cm) node[circle,inner sep=1,fill,draw] (2) {};
		\draw (-2cm,-2cm) node[circle,inner sep=1,fill,draw] (3) {};
		\draw (-2cm,-2.5cm) node[inner sep=1.5pt,circle] (4) {${\mathcal R}$};
		\draw  [line width=2pt,blue,->] (1) to[out=0,in=45] (2);
		\draw  [black,-] (1) to[out=180,in=180] (2);
		\draw (2)--(3);

		\draw (0cm,0cm) node[circle,inner sep=1,fill,draw] (1) {};
		\draw (0cm,-1.5cm) node[circle,inner sep=1,fill,draw] (2) {};
		\draw (0cm,-2cm) node[circle,inner sep=1,fill,draw] (3) {};
		\draw (0cm,-2.5cm) node[inner sep=1.5pt,circle] (4) {${\mathcal R}$};
		\draw  [line width=2pt,blue,->] (1) to[out=0,in=45] (2);
		\draw  [line width=2pt,red,->] (1) to[out=180,in=135] (2);
		\draw (0cm,-.8cm) node[inner sep=1.5pt,circle] (5) {$\tiny{\overleftrightarrow{{sym}}}$};
		\draw (2)--(3);
				
        \draw (-1cm,-.5cm) node[inner sep=1.5pt,circle] (4) {$\biguplus$};
		\draw (-3cm,-.5cm) node[inner sep=1.5pt,circle] (5) {$\biguplus$};
		\draw (-5cm,-.5cm) node[inner sep=1.5pt,circle] (6) {$\biguplus$};
		\draw (-6cm,-.5cm) node[inner sep=1.5pt,circle] (7) {$\bullet$};
		\draw (-8cm,-.5cm) node[inner sep=1.5pt,circle] (8) {$=$};
		\draw (-9cm,-.5cm) node[inner sep=1.5pt,circle] (9) {${\mathcal R}$};
		
			\draw (-8cm,.5cm) node[inner sep=1.5pt,circle] (7) {Cases:};
			\draw (-6cm,.5cm) node[inner sep=1.5pt,circle] (7) {$0a$};
		   \draw (-4cm,.5cm) node[inner sep=1.5pt,circle] (7) {$0b$};
		   \draw (-2cm,.5cm) node[inner sep=1.5pt,circle] (7) {$1a$};
		   \draw (0cm,.5cm) node[inner sep=1.5pt,circle] (7) {$1b$};

		\end{tikzpicture}
	\end{center}		
	\caption{The combinatorial specification for rooted level-1 networks. 
	(In this picture, all arcs are directed downwards, the thick arcs each represent a directed path which contains at least one internal vertex incident with a cut arc.)}
	\label{rootedlevel1}
\end{figure}
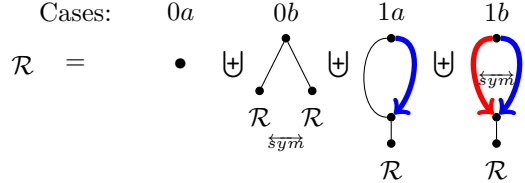

Denoting $r_n$ the number of rooted level-1 networks on $n$ leaves, and $R(z)=\sum_{n \geq 0} r_n\frac{z^n}{n!}$ the associated exponential generating function, we deduce from the specification that 
$$R=z+\frac{1}{2}R^2+\frac{R^2}{1-R}+\frac{R}{2}\left(\frac{R}{1-R}\right)^2\textrm{.}$$

Unlike for the other generating functions considered in this paper, 
the above equation for $R$ allows to find a closed formula for $R$. 
Indeed, the above equation has four solutions that can be made explicit with the help of a solver. We can further notice that evaluating the generating function $R(z)$ at $z=0$, we must obtain $R(0) = r_0 = 0$. Among the four candidate solutions for $R$, we therefore select the only one which has value 0 for $z=0$ and obtain an explicit form for $R(z)$, given in Theorem~\ref{thm:GF_Rooted_1}.

\begin{theorem}\label{thm:GF_Rooted_1}
The exponential generating function $R(z)$ of rooted level-1 networks is expressed as 
$$R(z)=-\frac{\sqrt 2 \sqrt{-4(\sqrt{1-8z} - 2)z + 9 \sqrt{1-8z} - 1} }{4(1-8z)^{\frac{1}{4}}} - \frac{1}{4} \sqrt{1-8z} + \frac{5}{4}$$
within its disk of convergence of radius $\frac{1}{8}$.
\end{theorem}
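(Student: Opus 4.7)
The plan is to convert the implicit functional equation
$$R = z + \tfrac12 R^2 + \tfrac{R^2}{1-R} + \tfrac{R}{2}\Bigl(\tfrac{R}{1-R}\Bigr)^2$$
derived from the specification of Fig.~\ref{rootedlevel1} into an explicit one by clearing denominators. Multiplying through by $2(1-R)^2$ and collecting like terms yields a polynomial identity $P(R,z)=0$ of degree four in $R$ with coefficients linear in $z$; explicitly, one obtains $R^4-5R^3+(7+2z)R^2-(2+4z)R+2z = 0$ after a routine algebraic simplification.

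Next I would apply Ferrari's method (or, equivalently, hand the step off to a computer algebra system, as the statement already suggests) to solve this quartic for $R$ as a function of $z$. This produces four algebraic branches, each expressible via nested radicals; the outer radical $\sqrt{1-8z}$ in the theorem arises from the discriminant of the associated resolvent cubic, which also accounts for the exponent $1/4$ of $(1-8z)$ in the denominator. Of the four branches, only one is a formal power series vanishing at $z=0$: indeed, the original equation can be put in Lagrangean form $R=z\,\psi(R)$ with $\psi(0)=1\neq 0$ (because $f(R) := R-\tfrac12 R^2-\tfrac{R^2}{1-R}-\tfrac{R^3}{2(1-R)^2}$ satisfies $f(0)=0$ and $f'(0)=1$), so by the implicit function theorem there is a unique such analytic branch. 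A direct evaluation at $z=0$ identifies it as the expression in the statement: the inner radicand equals $-4(1-2)\cdot 0+9-1=8$, so the proposed formula evaluates to $-\tfrac{\sqrt{2}\cdot 2\sqrt{2}}{4}-\tfrac14+\tfrac54=0$, matching $r_0=0$ and thereby fixing the sign choices in both nested radicals.

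It remains to identify the radius of convergence. The closed-form expression can fail to be analytic only at zeros of $1-8z$ (through the branching of $(1-8z)^{1/4}$ and $\sqrt{1-8z}$) or at zeros of the inner radicand $-4(\sqrt{1-8z}-2)z + 9\sqrt{1-8z}-1$. Substituting $s:=\sqrt{1-8z}$, the inner radicand simplifies to $\tfrac{s}{2}(s^2-2s+17)$, whose second factor has negative discriminant and is therefore strictly positive on $\mathbb{R}$; hence the inner radicand vanishes exactly when $s=0$, i.e.\ at $z=\tfrac18$. The two potential branch points on the positive real axis thus coincide at $z=\tfrac18$, and the disk of convergence has radius $\tfrac18$. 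This is consistent with Proposition~\ref{prop:asym_Rooted_1}, since $c_2\cdot e\approx 8$. The main obstacle throughout is the sign bookkeeping inside the quartic formula, and the evaluation at $z=0$ above is the cleanest way to resolve it.
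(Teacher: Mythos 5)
Your proposal matches the paper's own argument: the paper likewise clears denominators in the functional equation $R=z+\tfrac12 R^2+\tfrac{R^2}{1-R}+\tfrac{R}{2}\bigl(\tfrac{R}{1-R}\bigr)^2$ to a quartic in $R$, solves it with a computer algebra system, and selects the unique root vanishing at $z=0$; your quartic $R^4-5R^3+(7+2z)R^2-(2+4z)R+2z=0$ and the evaluation showing the closed form vanishes at $z=0$ are both correct. Your treatment of the radius of convergence (which the paper does not justify here, deferring in effect to Proposition~\ref{prop:asym_Rooted_1}) is also sound, though to exclude complex singularities inside $|z|<\tfrac18$ you should either invoke Pringsheim's theorem for the non-negative-coefficient series $R$ or note that the complex zeros $s=1\pm 4i$ of $s^2-2s+17$ correspond to $z=2\mp i$, far outside the disk.
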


\subsection{Exact enumeration formula}
The first terms of the sequence $(r_0,r_1, r_2, \ldots)$ can be read on the Taylor expansion of $R(z)$, and have been collected in Table~\ref{TableNumbers}:
$$R(z) = z + 3 \frac{z^2}{2!} + 36 \frac{z^3}{3!} + 723 \frac{z^4}{4!} + 20280 \frac{z^5}{5!} + o(z^{5}) \textrm{.}$$

More generally, we have:
\begin{proposition}
For any $n \geq 1$, the number $r_n$ of rooted level-1 networks with $n$ leaves is given by 
$$\frac{(2n-2)!}{2^{n-1}(n-1)!} + \sum_{1 \leq k \leq i \leq n-1 \atop 0 \leq p\leq k} \frac{(n+i-1)! (n+k-i-2)! \quad 2^{p-i}}{(i-k)!(k-p)!p!(n-1-i-k+p)!(2k-p-1)!}\textrm{.}
$$
\label{prop:enum_Rooted_1}
\end{proposition}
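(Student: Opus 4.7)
The plan is to apply the Lagrange inversion formula to the functional equation for $R(z)$ given just before Theorem~\ref{thm:GF_Rooted_1}, following the same strategy as the proof of the exact formula for $g_n$ in Theorem~\ref{thm:GF_Unrooted_1}.

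First, I would isolate $z$ in that equation, rewriting it as $R = z\phi(R)$ with $\phi(z) = 1/(1-A(z))$ and $A(z) = \frac{z}{2} + \frac{z}{1-z} + \frac{z^2}{2(1-z)^2}$. Since $\phi(0) = 1 \neq 0$, Lagrange inversion gives $r_n = (n-1)!\,[z^{n-1}]\phi(z)^n$. I would then expand $\phi(z)^n = (1-A(z))^{-n}$ using Equation~\eqref{eq:classical_expansion}, so that $\phi(z)^n = \sum_{i\geq 0}\binom{n+i-1}{i}A(z)^i$, and apply the trinomial theorem to the three summands of $A(z)$ to obtain
$$A(z)^i \;=\; \sum_{a+b+c=i}\frac{i!}{a!\,b!\,c!}\cdot\frac{z^{a+b+2c}}{2^{a+c}(1-z)^{b+2c}}\textrm{.}$$
A second application of Equation~\eqref{eq:classical_expansion} to each factor $(1-z)^{-(b+2c)}$ then extracts the coefficient of $z^{n-1-a-b-2c}$.

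Two subcases arise. When $b+2c=0$, necessarily $a=i=n-1$, and this term contributes exactly the first summand $(2n-2)!/(2^{n-1}(n-1)!)$ of the claimed formula, after multiplication by the Lagrange prefactor $(n-1)!$. When $b+2c\geq 1$, I would perform the change of variables $k := b+c$ and $p := b$, so that $a=i-k$, $c=k-p$, $a+c=i-p$, and $b+2c=2k-p$; the condition $b+2c\geq 1$ translates into $k\geq 1$, and requiring the extracted coefficient to be nonzero yields the ranges $1\leq k\leq i\leq n-1$ and $0\leq p\leq k$. Under this substitution $2^{-(a+c)}$ becomes $2^{p-i}$, and $[z^{n-1-i-k+p}](1-z)^{-(2k-p)}$ evaluates via Equation~\eqref{eq:classical_expansion} to $\binom{n+k-i-2}{n-1-i-k+p} = (n+k-i-2)!/\bigl((n-1-i-k+p)!\,(2k-p-1)!\bigr)$. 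Combining these factors with the Lagrange prefactor $(n-1)!\binom{n+i-1}{i} = (n+i-1)!/i!$ and simplifying produces precisely the summand stated in the proposition.

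The main obstacle is the careful bookkeeping across the three-index trinomial expansion and the choice of the change of variables $(a,b,c)\mapsto(i-k,\,p,\,k-p)$, which is exactly what brings the expression into the form stated in the proposition; once these are in place, the remainder of the computation is a routine rearrangement of factorials.
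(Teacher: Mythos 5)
Your proposal is correct and follows essentially the same route as the paper's proof: Lagrange inversion applied to $R=z\phi(R)$, expansion of $\phi(z)^n=(1-A(z))^{-n}$ via Eq.~\eqref{eq:classical_expansion}, and a multinomial expansion of $A(z)^i$ whose indices are then re-parametrized to produce the stated summand. The only cosmetic difference is that you expand the three terms of $A(z)$ with a single trinomial theorem, whereas the paper factors out $\tfrac{z}{2}$ and applies the binomial theorem twice; your change of variables $(a,b,c)=(i-k,\,p,\,k-p)$ recovers exactly the paper's indexing.
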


\begin{proof}
To obtain a generic formula for $r_n$, we apply the Lagrange inversion formula, rewriting $R(z)$ as $R(z)=z \phi(R(z))\textrm{ where }\phi (z) = \frac{1}{1-\frac{1}{2}z-\frac{z}{1-z}-\frac{1}{2}\left(\frac{z}{1-z}\right)^2}\textrm{.}$

Using twice the usual development of $(1-z)^{-n}$ (for $n \geq 1$) which we recalled in Eq.~\eqref{eq:classical_expansion} 
and twice the binomial theorem, we obtain that
\begin{align*}
\phi(z)^n = &\sum_{i \geq 0} {{n+i-1} \choose {i}} \frac{z^i}{2^i} \\
+ & \sum_{i \geq 0} \sum_{k=1}^i \sum_{p=0}^k \sum_{j \geq 0} {{n+i-1} \choose {i}} {{i} \choose {k}} {{k} \choose {p}} {{2k-p+j-1} \choose {j}} \frac{z^{i+k-p+j}}{2^{i-p}}\textrm{,}
\end{align*}

and we deduce that
\begin{align*}
r_n & = n! [z^n] R(z) = n! \frac{1}{n} [z^{n-1}] \phi(z)^n = (n-1)! [z^{n-1}] \phi(z)^n \\
 & = \frac{(2n-2)!}{2^{n-1}(n-1)!} \\
 & + \sum_{1 \leq k \leq i \leq n-1 \atop 0 \leq p\leq k} \frac{(n+i-1)! (n+k-i-2)!}{(i-k)!(k-p)!p!(n-1-i-k+p)!(2k-p-1)!} 2^{p-i}
\end{align*}

as announced.
\end{proof}

\subsection{Asymptotic evaluation}

The equation for $R(z)$ also enables us to derive an asymptotic estimate of $r_n$. 

\begin{proposition}
The number $r_n$ of rooted level-1 networks on $n$ leaves is asymptotically equivalent to $c_1 \cdot c_2^n \cdot n^{n-1}$ for $c_1 = \frac{\sqrt{34}(\sqrt{17}-1)}{136} \approx 0.1339$ and $c_2 = \frac{8}{e} \approx 2.943$.
\label{prop:asym_Rooted_1}
\end{proposition}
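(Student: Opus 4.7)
The plan is to carry out a direct singularity analysis of the closed-form expression for $R(z)$ from Theorem~\ref{thm:GF_Rooted_1}, rather than to invoke the Singular Inversion Theorem on $R(z)=z\phi(R(z))$ (which would in principle also work, with $\phi$ from the proof of Proposition~\ref{prop:enum_Rooted_1}). The advantage of the closed-form route is that it yields $c_1$ and $c_2$ in exact algebraic form, matching the announced values. First I would record that the only non-analyticity in the closed form comes from $\sqrt{1-8z}$, that the dominant singularity therefore sits at $z=1/8$ (consistent with the radius of convergence stated in Theorem~\ref{thm:GF_Rooted_1}), and that it is the unique singularity on the circle of convergence, so the hypotheses of the standard transfer theorems (\cite[Chapter~VI]{FlajoletSedgewick2008}) are satisfied.

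The core preparatory step is the singular expansion of $R(z)$ around $z=1/8$. The factor $(1-8z)^{-1/4}$ in the closed form looks more singular than a square root, but I expect a cancellation. Setting $u:=\sqrt{1-8z}$ and substituting $z=(1-u^2)/8$ into the radicand $-4(\sqrt{1-8z}-2)z+9\sqrt{1-8z}-1$, a short computation gives $\tfrac{u}{2}(17-2u+u^2)$. Taking the square root, multiplying by $\sqrt{2}$, and dividing by $4(1-8z)^{1/4}=4u^{1/2}$, the $u^{1/2}$ factors cancel and the first summand of $R(z)$ collapses to $-\tfrac{1}{4}\sqrt{17-2u+u^2}$, which is analytic in $u$ at $u=0$. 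Hence
$$R(z) = \tfrac{5}{4} - \tfrac{u}{4} - \tfrac{1}{4}\sqrt{17-2u+u^2},$$
and expanding $\sqrt{17-2u+u^2} = \sqrt{17} - u/\sqrt{17} + O(u^2)$ yields
$$R(z) = \tfrac{5-\sqrt{17}}{4} + \tfrac{\sqrt{17}-1}{4\sqrt{17}}(1-8z)^{1/2} + O(1-8z)$$
near $z=1/8$.

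From here, the final leg is routine. Applying the transfer $[z^n](1-8z)^{1/2}\sim -8^n/(2\sqrt{\pi n^3})$ (with the $O(1-8z)$ part contributing only a polynomially smaller term) gives $[z^n]R(z)\sim \tfrac{\sqrt{17}-1}{8\sqrt{17\pi n^3}}\,8^n$. Multiplying by $n!$ and using Stirling's formula $n!\sim\sqrt{2\pi n}\,(n/e)^n$ turns this into $r_n \sim \tfrac{(\sqrt{17}-1)\sqrt{2}}{8\sqrt{17}}\,n^{n-1}(8/e)^n$; rationalizing $\sqrt{2}/\sqrt{17}=\sqrt{34}/17$ produces the advertised $c_1=\tfrac{\sqrt{34}(\sqrt{17}-1)}{136}$ and $c_2=8/e$. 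The one step that genuinely requires care, and which I view as the main (mild) obstacle, is the cancellation that transforms the apparent $(1-8z)^{-1/4}$ behavior into a clean square-root singularity: overlooking it would lead to the wrong singular type, whereas once performed it makes the remainder of the argument a direct application of standard tools.
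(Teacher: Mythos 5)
Your proof is correct in substance, but it follows a genuinely different route from the paper's. The paper applies the Singular Inversion Theorem (Theorem~\ref{thm:singular_inversion}) to the functional equation $R(z)=z\phi(R(z))$ with $\phi(z)=\frac{1}{1-\frac{1}{2}z-\frac{z}{1-z}-\frac{1}{2}(\frac{z}{1-z})^2}$: it solves the characteristic equation exactly ($\tau=\frac{5-\sqrt{17}}{4}$), obtains $\rho=\frac{1}{8}$ and $\sqrt{\phi(\tau)/(2\phi''(\tau))}=\frac{\sqrt{17}(\sqrt{17}-1)}{136}$, and concludes by Stirling. You instead perform a direct singularity analysis of the closed form of Theorem~\ref{thm:GF_Rooted_1} and invoke the Transfer Theorem --- precisely the alternative the authors mention in the remark following their proof without carrying it out. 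Your key computation is right: substituting $z=(1-u^2)/8$ with $u=\sqrt{1-8z}$ does turn the radicand into $\frac{u}{2}(17-2u+u^2)$, the $u^{1/2}$ factors cancel against $(1-8z)^{1/4}$, and $R(z)=\frac54-\frac{u}{4}-\frac14\sqrt{17-2u+u^2}$ has a genuine square-root singularity at $z=\frac18$; identifying this cancellation is indeed the only delicate point. Both methods produce the same constants (note $\frac{\sqrt{17}(\sqrt{17}-1)}{136}=\frac{\sqrt{17}-1}{8\sqrt{17}}$, matching your coefficient). The paper's route avoids manipulating the unwieldy closed form and generalizes uniformly to the level-2 cases where no closed form is available, whereas yours yields the singular expansion and the constants in exact algebraic form with no need to verify the hypotheses of the inversion theorem. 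One slip: in your displayed singular expansion the coefficient of $(1-8z)^{1/2}$ should be $-\frac{\sqrt{17}-1}{4\sqrt{17}}$ (negative), since $-\frac{u}{4}+\frac{u}{4\sqrt{17}}=-\frac{(\sqrt{17}-1)u}{4\sqrt{17}}$ and $R$ must increase to $\tau$ as $z\to\frac18^-$; your subsequent transfer step, which combines with the negative constant $[z^n](1-8z)^{1/2}\sim-8^n/(2\sqrt{\pi n^3})$ to give a positive asymptotic, is consistent with the correct sign, so this is a typographical error rather than a flaw in the argument.
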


\begin{proof}
Recall that  $R(z)=z \phi(R(z))\textrm{ where }\phi (z) = \frac{1}{1-\frac{1}{2}z-\frac{z}{1-z}-\frac{1}{2}\left(\frac{z}{1-z}\right)^2}$ so that we can apply the Singular Inversion Theorem. Unlike in the case of unrooted level-1 networks, the solution $\tau$ of the characteristic equation $\phi(z)-z\phi'(z)=0$ to be considered has a nice explicit expression here, and we have $\tau =\frac{5- \sqrt{17}}{4}$. We obtain $\rho = \frac{\tau}{\phi(\tau)} = \frac{1}{8}$ and $\sqrt{\frac{\phi(\tau)}{2\phi''(\tau)}} = \frac{\sqrt{17} (\sqrt{17}-1)}{136}$. 
Consequently, from Theorem~\ref{thm:singular_inversion} we have:
$$ [z^n] R(z) \sim \frac{\sqrt{17} (\sqrt{17}-1)}{136} \frac{8^n}{\sqrt{\pi n^3}}\textrm{.}
$$

Since $r_n = n! [z^n] R(z)$, using the Stirling estimate of the factorial, we finally get:
$$ r_n \sim \frac{\sqrt{34}(\sqrt{17}-1)}{136} \left(\frac{8}{e}\right)^n n^{n-1}\textrm{.} \hfill
$$
\end{proof}

Notice that with the explicit expression of the generating function $R(z)$ in Theorem~\ref{thm:GF_Rooted_1}, another way of proving Proposition~\ref{prop:asym_Rooted_1} would have been to use the Transfer Theorem (Corollary VI.1 of~\cite{FlajoletSedgewick2008}). We do not enter the details of this other method here, but we can check that it gives the same result.

\subsection{Refined enumeration formula}

As in the work of Semple and Steel~\cite{SempleSteel2006}, we can refine the enumeration of rooted level-1 networks according to two additional parameters, which are typical of the ``level-1'' nature of our networks: their number of cycles and their total number of arcs among cycles. To do so, let us introduce the multivariate generating function $R(z,x,y) = \sum \frac{r(n,k,m)}{n!} z^n x^k y^m$, where $r(n,k,m)$ is the number of rooted level-1 networks with $n$ leaves, $k$ cycles and $m$ inner arcs (i.e. the total number of arcs inside those $k$ cycles is $m$). The specification for $\mathcal{R}$ translates into the following equation for $R= R(z,x,y)$:
\begin{equation}
R=z+\frac{1}{2}R^2+x\frac{R^2y^3}{1-yR}+xR\frac{1}{2}\left(\frac{Ry^2}{1-yR}\right)^2\textrm{.}
\label{eq:rooted1multivariate}
\end{equation}

The equation can be rewritten as follows:
$$R=z \phi(R,x,y)\textrm{ where }\phi (z,x,y) = \frac{1}{1-\frac{1}{2}z-x\frac{zy^3}{1-yz}-\frac{1}{2}xy^4\left(\frac{z}{1-yz}\right)^2}\textrm{.}$$

Applying the Lagrange inversion formula again, we have 
$$\frac{r(n,k,m)}{n!} = [z^n x^k y^m] R(z,x,y) = \frac{1}{n}[z^{n-1} x^k y^m] \phi(z,x,y)^n,$$ 
and by the exact same steps of computation as in the proof of Proposition~\ref{prop:enum_Rooted_1}, we get:

\begin{proposition}
The number $r(n,k,m)$ of level-1 networks with $n$ leaves, $k$ cycles and $m$ inner arcs 
(with $k\geq 1$ and $m\geq 1$) is 
$$
r(n,k,m) = \sum_{p=0}^k \frac{(2n+3k-m-2)! (m-2k-1)! 2^{p+m+1-n-3k}}{(n+2k-m-1)! p! (k-p)! (m-4k+p)! (2k-p-1)!} \textrm{.}
$$
\label{prop:refinedEnumerationRooted1}
\end{proposition}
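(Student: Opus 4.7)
The plan is to mirror the proof of Proposition~\ref{prop:enum_Rooted_1}, but carrying the refined variables $x$ and $y$ through every step of the computation. Starting from the multivariate functional equation~\eqref{eq:rooted1multivariate}, I would first rewrite it in the Lagrange-friendly form $R=z\phi(R,x,y)$ with $\phi(z,x,y)=\bigl(1-\tfrac{1}{2}z - \tfrac{xzy^3}{1-yz} - \tfrac{1}{2}xy^4\bigl(\tfrac{z}{1-yz}\bigr)^{\!2}\bigr)^{-1}$ and then invoke the (multivariate version of the) Lagrange inversion formula recalled in Section~\ref{sec:GF}, giving
\[
r(n,k,m) = (n-1)!\,[z^{n-1}x^ky^m]\,\phi(z,x,y)^n.
\]

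The next step is to extract this coefficient explicitly. First I would use the quasi-inverse expansion $\phi^n = \sum_{i\geq 0}\binom{n+i-1}{i} A^i$ where $A$ denotes the sum inside the bracket; then expand $A^i$ by the multinomial theorem in its three summands, producing a triple sum indexed by $(i_1,i_2,i_3)$ with $i_1+i_2+i_3=i$; and finally expand each factor $(1-yz)^{-j}$ via the classical identity~\eqref{eq:classical_expansion}, introducing a further summation index $q$. This yields an explicit multiple sum, every monomial of which is of the form $c\cdot x^{i_2+i_3}y^{3i_2+4i_3+q}z^{i_1+i_2+2i_3+q}$ for a product of binomial coefficients and a power of $1/2$.

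Reading off the coefficient of $x^ky^mz^{n-1}$ then amounts to solving the linear system
\[
i_2+i_3=k,\qquad 3i_2+4i_3+q=m,\qquad i_1+i_2+2i_3+q=n-1,
\]
which is of rank $3$ in four unknowns. Picking the natural free parameter $p:=i_2$, one finds that $i_3=k-p$, $q=m-4k+p$ and $i_1=n+2k-m-1$ are all forced, and hence so is $i=n+3k-m-1$. Positivity of the remaining factorial arguments cuts the range down to $0\leq p\leq k$ (with the other positivity constraints absorbed into vanishing factorials in the convention $1/(\text{negative integer})!=0$). After substituting these values into $\binom{n+i-1}{i}\cdot\tfrac{i!}{i_1!i_2!i_3!}\cdot\binom{i_2+2i_3+q-1}{q}\cdot 2^{-(i_1+i_3)}$, multiplying by $(n-1)!$, and simplifying the exponent of $2$ as $-(i_1+i_3)=p+m+1-n-3k$, one lands exactly on the stated closed form.

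The computation is essentially routine; the main obstacle is purely bookkeeping: choosing the parametrization so that the surviving free index produces the factorials $p!,(k-p)!,(m-4k+p)!,(2k-p-1)!$ in the denominator of the announced formula (as opposed to, say, $(k+p-1)!$ that would appear under the alternative choice $p=i_3$). Once one notices that $i_2$ is the right index to keep, all remaining manipulations are straightforward algebraic simplifications, and the resulting expression matches Proposition~\ref{prop:refinedEnumerationRooted1} verbatim.
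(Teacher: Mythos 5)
Your proposal is correct and follows essentially the same route as the paper: the paper likewise applies the multivariate Lagrange inversion formula to $R=z\phi(R,x,y)$ and extracts the coefficient by the same quasi-inverse/multinomial/$(1-yz)^{-j}$ expansions (it merely says "by the exact same steps of computation as in the proof of Proposition~\ref{prop:enum_Rooted_1}", which organizes the expansion as two nested binomial theorems rather than one multinomial, an equivalent bookkeeping). Your identification of the forced indices $i_3=k-p$, $q=m-4k+p$, $i_1=n+2k-m-1$ and the resulting simplification to the stated closed form check out.
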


Notice that from $r_n = r(n,0,0) + \sum_{k=1}^{n-1} \sum_{m=3k}^{n+2k-1} r(n,k,m)$ and the above theorem, we can recover Proposition~\ref{prop:enum_Rooted_1} by the change of variable $m=n+3k-i-1$.

\subsection{Asymptotic distribution of parameters}

As we have seen with Proposition~\ref{prop:normaldistribunrooted1}, 
the equation for the refined generating function does not only 
give access to the explicit formula of Proposition~\ref{prop:refinedEnumerationRooted1} above, 
but also allows to prove that the two parameters of interest 
are each asymptotically normally distributed. 

\begin{proposition}\label{prop:normaldistribrooted2}
Let $X_n$ (resp. $Y_n$) be the random variable counting the number of cycles (resp. inner arcs) 
in rooted level-$1$ networks with $n$ leaves. 
Both $X_n$ and $Y_n$ are asymptotically normally distributed, 
and more precisely, we have 
\[
\mathbb{E} {X_n}= \mu_X n+ O(1), \;\;\;\;\;\; \mathbb{V}ar{X_n}=\sigma_X^{2} n+O(1) \;\;\;\; and \;\;\;\;\; \dfrac{X_{n}-\mathbb{E} {X_n}}{\sqrt{\mathbb{V}ar{X_n}}}\xrightarrow{d} \mathcal{N}(0,1),
\]
\[
\mathbb{E} {Y_n}= \mu_Y n+ O(1), \;\;\;\;\;\; \mathbb{V}ar{Y_n}=\sigma_Y^{2} n+O(1) \;\;\;\; and \;\;\;\;\; \dfrac{Y_{n}-\mathbb{E} {Y_n}}{\sqrt{\mathbb{V}ar{Y_n}}}\xrightarrow{d} \mathcal{N}(0,1),
\]
where $\mu_X \approx 0.56 $, $\sigma_X^2 \approx 0.18 $, $\mu_Y \approx 1.93 $ and $\sigma_Y^2 \approx 1.24 $. 
\end{proposition}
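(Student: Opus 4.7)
The plan is to apply Theorem~\ref{Drmota} to each parameter separately, following the template set up in the proof of Proposition~\ref{prop:normaldistribunrooted1} (and, as indicated in Remark~\ref{rk:usingThm}, without repeating the routine verifications). I treat $X_n$ first. Setting $R(z,x) := R(z,x,1)$, we have $\mathbb{E} x^{X_n} = [z^n]R(z,x) / [z^n] R(z,1)$, so the setup matches the hypothesis of Theorem~\ref{Drmota}. Specializing Eq.~\eqref{eq:rooted1multivariate} at $y=1$ and solving algebraically for $z$ puts $R(z,x)$ in the fixed-point form $R = F(R,z,x)$ where
$$F(C,z,x) = \frac{z}{1 - \tfrac{1}{2}C - \tfrac{xC}{1-C} - \tfrac{xC^{2}}{2(1-C)^{2}}}.$$
I would then verify the basic hypotheses of Theorem~\ref{Drmota}: $F$ is rational and analytic at the origin in all three variables; $F(C,0,x)=0$ and $F(0,z,x)=z \not\equiv 0$; and all coefficients $[z^{n}C^{m}]F(C,z,1)$ are non-negative (this is transparent from the combinatorial interpretation, since at $x=1$ the class recovered is simply $\mathcal{R}$).

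The next step is to solve the system $C=F(C,z,1)$, $1=F_{C}(C,z,1)$. At $x=1$, this coincides with the characteristic system for the univariate generating function $R(z)$, which was already solved in the proof of Proposition~\ref{prop:asym_Rooted_1}: the solution is $C_{0}=\tau=\tfrac{5-\sqrt{17}}{4}$ and $z_{0}=\rho=\tfrac{1}{8}$. From the explicit rational form of $F$ one checks by direct computation that $F_{z}(C_{0},z_{0},1) \neq 0$ and $F_{CC}(C_{0},z_{0},1) \neq 0$, so the full hypothesis list of Theorem~\ref{Drmota} is satisfied. The theorem then yields asymptotic normality of $X_{n}$ together with the announced forms of $\mathbb{E}X_{n}$ and $\mathbb{V}ar X_{n}$. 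The numerical estimates $\mu_{X}\approx 0.56$ and $\sigma_{X}^{2}\approx 0.18$ are obtained by substituting the exact values of $C_{0}$ and $z_{0}$ into the explicit formulas for $\mu$ and $\sigma^{2}$ given in Theorem~\ref{Drmota}; this is delegated to the companion Maple worksheet.

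For $Y_n$ the argument is the same up to a change of variable. I set $R(z,y):=R(z,1,y)$, specialize Eq.~\eqref{eq:rooted1multivariate} at $x=1$, and rearrange to obtain an analogous $F$ with the role of $x$ played by $y$. Because fixing one of the two marks equal to $1$ does not alter the univariate generating function $R(z)$, the characteristic system has the same solution $(C_{0},z_{0})=\bigl(\tfrac{5-\sqrt{17}}{4},\tfrac{1}{8}\bigr)$, and Theorem~\ref{Drmota} applies identically; only the numerical plug-in to obtain $\mu_{Y}\approx 1.93$ and $\sigma_{Y}^{2}\approx 1.24$ differs.

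I do not anticipate a real obstacle in this proof: the combinatorial specification has already been translated into the functional equation~\eqref{eq:rooted1multivariate}, and the univariate analysis of Proposition~\ref{prop:asym_Rooted_1} has already located the singular point. The only mildly delicate point is the algebraic rearrangement turning Eq.~\eqref{eq:rooted1multivariate} into the form $R=F(R,z,x)$ required by Theorem~\ref{Drmota} (in particular ensuring $F(C,0,x)=0$), together with the bookkeeping of partial derivatives needed for the closed-form expressions of $\mu$ and $\sigma^{2}$; these computations are mechanical and naturally done in Maple.
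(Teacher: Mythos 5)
Your proposal follows exactly the paper's own argument: specialize Eq.~\eqref{eq:rooted1multivariate} at $y=1$ (resp.\ $x=1$), write $R = F(R,z,x)$ with $F(C,z,x)=z\phi(C,x,1)$, verify the hypotheses of Theorem~\ref{Drmota}, solve the characteristic system at $(C_0,z_0)=\bigl(\tfrac{5-\sqrt{17}}{4},\tfrac18\bigr)$, and plug into the closed formulas for $\mu$ and $\sigma^2$. The only (correct and welcome) refinement is your explicit observation that the Drmota system at $x=1$ coincides with the characteristic system already solved for Proposition~\ref{prop:asym_Rooted_1}, which the paper only states implicitly by reporting the same numerical solution.
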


\begin{proof}
Recall that, defining $\phi (z,x,y) = \frac{1}{1-\frac{1}{2}z-x\frac{zy^3}{1-yz}-\frac{1}{2}xy^4\left(\frac{z}{1-yz}\right)^2}$, $R(z,x,y)$ satisfies $R=z \phi(R,x,y)$. 
We focus first on $X_n$, setting $y=1$, and we consider $R(z,x) := R(z,x,1)$. 
It holds that 
$$
\mathbb{E} x^{X_n}= \dfrac{[z^n]R(z,x)}{[z^n]R(z,1)}.
$$
Defining the function $F$ by $F(R,z,x) = z\phi(R,x,1)$, 
it also holds that $R(z,x) = F(R(z,x),z,x)$. 
It is readily checked that $F$ satisfies all hypotheses of Theorem~\ref{Drmota}. 
Moreover, the system 
\begin{align*}
R=&F(R,z,1) \\
1=&F_R(R,z,1)
\end{align*}
admits a solution $(R_0,z_0)$ with $z_0= 1/8$ and $R_0 \approx 0.2192$, 
which satisfies the hypothesis of Theorem~\ref{Drmota}.
The result and numerical estimates of $\mu_X$ and $\sigma_X^2$ then follow from Theorem~\ref{Drmota}.

For $Y_n$ instead of $X_n$, 
the proof works in the exact same way, considering this time $R(z,y) := R(z,1,y)$ instead, and adjusting the definition of $F$ accordingly. 
As in the proof of Proposition~\ref{prop:normaldistribunrooted1}, 
we find the same solution $(R_0,z_0)$ of the associated system, as it should be.
\end{proof}

\section{Counting unrooted level-2 networks} \label{sec:unrooted2}

\subsection{Combinatorial specification}

First of all, let us recall that any bridgeless component in an unrooted level-2 network contains at least three vertices incident with a cut-edge (since otherwise there would be an infinite number of such networks with a given number of leaves). 

As in the case of level-1 unrooted networks, we consider \emph{pointed} level-2 networks, that are unrooted level-2 networks equipped with a fictitious root, which is a new leaf labeled by the special taxa $\#$. This provides a bijection between unrooted level-2 networks on the set of taxa $X \uplus \{\#\}$ and pointed level-2 networks on $X$.
Therefore, there are as many unrooted level-2 networks of the set of taxa $X \uplus \{\#\}$ as pointed level-2 networks on $X$ rooted in a leaf labeled by $\# \notin X$.
Notice that pointed level-2 networks do not correspond to classical rooted level-2 networks. Indeed, every bridgeless component in a pointed level-2 network has a distinguished vertex which could be considered as the equivalent of a root, but no reticulation vertices, whereas it has both in the usual definition of rooted level-2 networks.

Let us denote by $\mathcal{U}$ the set of such pointed level-2 networks, the size of a network of $\mathcal{U}$ being the number of its leaves different from the root. 
Let $u_n$ be the number of networks of size $n$ in $\mathcal{U}$. 
The above argument shows that $u_n$ counts the number of unrooted level-2 networks on $(n+1)$ leaves. We introduce $U(z)=\sum_{n \geq 0} u_n\frac{z^n}{n!}$ the associated exponential generating function.

To obtain a combinatorial specification for $\mathcal{U}$, and hence an equation satisfied by $U(z)$, we describe the possible shapes of a network $N$ of $\mathcal{U}$, depending on 
the level (0, 1 or 2) of the bridgeless component that contains the neighbouring vertex of the fictitious root. 

Let $v$ be the neighbor of the fictitious root. Then we have the following cases to consider, depending on the level of the bridgeless component that contains $v$. 

\begin{itemize}
 \item The first case is that $v$ is a leaf.
 \item If $v$ does not belong to a cycle nor to a bridgeless component of level 2, then $N$ is described as an unordered pair of two pointed level-2 networks.
 \item If $v$ belongs to a cycle but not to a bridgeless component of level 2, then $N$ is described as an unoriented sequence of at least two pointed level-2 networks. \\
 (These first three cases are the same as in Section~\ref{sec:rooted1}.)
 \item The last possibility is that $v$ belongs to a bridgeless component of level 2. 
 The underlying level-2 generator, $G$, is necessarily of the shape 
\begin{tikzpicture}[baseline=(top.base)]
\begin{scope}[scale=0.3]
\draw (2,4.5) node [] (pointed) {\#};
\draw (2,2.5) node [circle, inner sep=1,fill,draw] (top) {};
\draw (0,1) node [circle, inner sep=1,fill,draw] (left) {};
\draw (4,1) node [circle, inner sep=1,fill,draw] (right) {};
\draw (pointed) -- (top);
\draw (top) .. controls(0.5,2) .. (left);
\draw (top) .. controls(3.5,2) .. (right);
\draw (right) .. controls(3,-1) .. (left);
\draw (left) .. controls(1,-1) .. (right);
\end{scope}
\end{tikzpicture}.
\noindent In this case, we distinguish many cases in Section~\ref{appendix-unrooted-l2} of the Appendix, depending on whether each edge of the level-2 generator contains exactly one vertex incident with a cut-edge, several or none.
\end{itemize}

\subsection{Generating function}
The specification is directly translated into the following equation for the generating function $U$ (see the Appendix for details):
\begin{align*}\label{eq:unrooted2multivariate}
{U}&=z+
\frac{{U}^2}{2}+\frac{{U}^2}{2(1- {U})} 
+\frac{{U}^2}{2(1- {U})} 
+\frac{3}{2}{U}^2 
+\frac{5{U}^3}{2(1- {U})} 
+\frac{5{U}^4}{4(1- {U})^2} 
+{U}^3 
+\frac{3{U}^4}{1-{U}} 
\\
 	&
+\frac{3{U}^5}{(1- {U})^2} 
+\frac{{U}^6}{(1- {U})^3} 
+\frac{{U}^4}{4} 
+\frac{{U}^5}{1- {U}} 
+\frac{3{U}^6}{2(1- {U})^2} 
+\frac{{U}^7}{(1- {U})^3} 
+\frac{{U}^8}{4(1- {U})^4}. 
\end{align*}

This equation for the generating function allows to derive the first coefficients of the series expansion of $U(z)$, namely: 
 \[
 {U}(z)=z+3z^2+\frac{45}{2}z^3+\frac{421}{2}z^4+\frac{8809}{4}z^5+\cdots.
 \]
The corresponding first values of $u_n$ have been included in Table~\ref{TableNumbers}. 
(Recall indeed that $U(z)=\sum_{n \geq 0} u_n\frac{z^n}{n!}$ and that $u_n$ is the number of unrooted level-2 networks on $(n+1)$ leaves). 

The above equation for $U(z)$ can also be rewritten as follows: 
\begin{theorem}
The generating function $U(z)$ satisfies:  
$$U(z)=z \phi(U(z))\textrm{ where }\phi (z) = \frac{1}{1-\frac{3z^5-16z^4+32z^3-30z^2+12z}{4(1-z)^4}}\textrm{.}$$
\label{thm:GF_Unrooted_2}
\end{theorem}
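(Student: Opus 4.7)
The theorem is essentially an algebraic reformulation of the long equation for $U$ that was obtained from the combinatorial specification. The plan is therefore purely computational: show that the right-hand side of that equation, minus $z$, simplifies to $\dfrac{U \cdot P(U)}{4(1-U)^4}$, where $P(U) = 3U^5 - 16U^4 + 32U^3 - 30U^2 + 12U$, which is exactly what the claimed identity $U = z\phi(U)$ rearranges to.

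First, I would rewrite the target identity in an equivalent but more tractable form. Starting from $U = z\phi(U) = \dfrac{z}{1 - \frac{P(U)}{4(1-U)^4}}$ and clearing the denominator, one gets
\[
U - z = U \cdot \frac{P(U)}{4(1-U)^4} = \frac{3U^6 - 16U^5 + 32U^4 - 30U^3 + 12U^2}{4(1-U)^4}.
\]
So the whole task reduces to verifying that the 15 terms appearing on the right of the long equation (everything other than $z$) add up to the single rational fraction above.

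Second, I would group the 15 summands according to the power of $(1-U)$ in their denominator. Specifically: the terms with no $(1-U)$ factor collect into $2U^2 + U^3 + \tfrac{1}{4}U^4$; the terms with denominator a multiple of $(1-U)$ collect into an expression of the form $\tfrac{aU^2 + bU^3 + cU^4 + dU^5}{1-U}$; similarly for $(1-U)^2$, $(1-U)^3$, and $(1-U)^4$. Then I would bring all five groups to the common denominator $4(1-U)^4$ by multiplying each numerator by the appropriate power of $(1-U)$, expanded via the binomial theorem.

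Finally I would sum the five resulting numerators and check that the total equals $3U^6 - 16U^5 + 32U^4 - 30U^3 + 12U^2$, coefficient by coefficient in $U$. This is a finite polynomial identity, so verification is completely mechanical; the companion Maple worksheet cited in Section~\ref{sec:GF} handles it automatically. The only real obstacle is the bookkeeping in grouping terms and expanding $(1-U)^k$ correctly, but there is no conceptual difficulty: once the algebra is carried out carefully, the identity falls out, establishing the claimed fixed-point equation $U(z) = z\phi(U(z))$.
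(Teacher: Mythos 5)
Your proposal is correct and matches the paper's proof, which simply states that the fixed-point form is ``obtained from the above equation for $U$ by algebraic manipulations'': grouping the fifteen summands by the power of $(1-U)$ in their denominators, clearing to the common denominator $4(1-U)^4$, and checking that the numerator equals $3U^6-16U^5+32U^4-30U^3+12U^2$ is exactly that manipulation, and the polynomial identity does check out coefficient by coefficient.
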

\begin{proof}
This is simply obtained from the above equation for $U$ by algebraic manipulations. 
\end{proof}

\subsection{Exact enumeration formula}
To obtain a closed form for $u_n$, we start from the equation for $U$ given in Theorem~\ref{thm:GF_Unrooted_2}.
By the Lagrange inversion formula we obtain that:
$$u_n = n! [z^n]U(z) = \frac{n!}{n}[z^{n-1}] \phi^n(z) = (n-1)! [z^{n-1}] \phi^n(z),$$
so, to compute the first values of $u_n$, we can compute
the Taylor expansions of $\phi^n(z)$ and get the values in Table~\ref{TableNumbers}. 

As for the case of level-1 networks, we may also deduce with routine algebra 
an explicit formula for $u_n$. 
This formula being however rather involved, we provide it only in Appendix.

\subsection{Asymptotic evaluation}

From Theorem~\ref{thm:GF_Unrooted_2}, we can furthermore derive an asymptotic evaluation of the number $u_n$ of unrooted level-2 networks on $(n+1)$ leaves, using Theorem~\ref{thm:singular_inversion}.

\begin{proposition}
The number $u_n$ of unrooted level-2 networks on $(n+1)$ leaves is asymptotically equivalent to $c_1 \cdot c_2^n \cdot n^{n-1}$ for constants $c_1$ and $c_2$ such that $c_1 \approx 0.07695$ and $c_2 \approx 5.4925$.
\label{prop:asym_Unrooted_2}
\end{proposition}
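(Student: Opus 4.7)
The plan is to follow exactly the same template as the proof of Proposition~\ref{prop:asym_Unrooted_1}, now applied to the equation $U(z) = z\phi(U(z))$ of Theorem~\ref{thm:GF_Unrooted_2} with $\phi(z) = \frac{4(1-z)^4}{4(1-z)^4 - (3z^5 - 16z^4 + 32z^3 - 30z^2 + 12z)}$. The key observation is that $\phi$ is a rational function, hence analytic at $0$, and its coefficients at $0$ are all non-negative (this is guaranteed by the combinatorial origin of $\phi$, since $\phi$ arises as a quasi-inverse of a series with non-negative coefficients). By footnote~\ref{footnote:aperiodic}, $\phi$ is aperiodic.

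First I would identify the radius of convergence $R$ of $\phi$ as the modulus of the pole of $\phi$ closest to the origin, i.e.\ the smallest positive real root of the polynomial $4(1-z)^4 - (3z^5 - 16z^4 + 32z^3 - 30z^2 + 12z)$. A numerical resolution (in the companion Maple worksheet) yields a unique such root strictly inside the real interval $(0,1)$. Next I would solve the characteristic equation $\phi(z) - z\phi'(z) = 0$ numerically; since this reduces to a polynomial equation after clearing denominators, one can check that it admits a unique real solution $\tau \in (0,R)$, as required by Theorem~\ref{thm:singular_inversion}.

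Once $\tau$ is identified, all hypotheses of the Singular Inversion Theorem are satisfied, and I would compute $\rho = \tau/\phi(\tau)$ numerically, together with $\sqrt{\phi(\tau)/(2\phi''(\tau))}$. Theorem~\ref{thm:singular_inversion} then gives
\[
[z^n]U(z) \sim \sqrt{\frac{\phi(\tau)}{2\phi''(\tau)}} \frac{\rho^{-n}}{\sqrt{\pi n^3}}.
\]
Finally, applying Stirling's formula $n! \sim (n/e)^n \sqrt{2\pi n}$ to $u_n = n! \, [z^n]U(z)$ yields
\[
u_n \sim \sqrt{\frac{\phi(\tau)}{\phi''(\tau)}} \cdot \frac{n^{n-1}}{(e\rho)^n},
\]
from which we read off $c_2 = 1/(e\rho)$ and $c_1 = \sqrt{\phi(\tau)/\phi''(\tau)}$. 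Plugging in the numerical values of $\tau$ and $\rho$ should produce the announced constants $c_1 \approx 0.07695$ and $c_2 \approx 5.4925$.

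The only non-routine part is the verification that the characteristic equation has a unique root in $(0,R)$: unlike the level-1 case, there is no clean closed form here and the argument must be numerical. However, by Remark~\ref{rk:usingThm} this kind of numerical check has already been delegated to the Maple worksheet, so the proof in the paper can simply invoke Theorem~\ref{thm:singular_inversion} after referring to the worksheet for the verification of its hypotheses, exactly as was done in the proof of Proposition~\ref{prop:asym_Unrooted_1}.
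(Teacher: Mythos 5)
Your proposal is correct and follows essentially the same route as the paper: the paper's proof likewise applies the Singular Inversion Theorem to $U(z)=z\phi(U(z))$ from Theorem~\ref{thm:GF_Unrooted_2}, takes $\tau\approx 0.12117$ and $\rho=\tau/\phi(\tau)\approx 0.06698$ from a numerical resolution in the companion Maple worksheet, and concludes with Stirling's formula exactly as you do. The additional details you give on checking the hypotheses (analyticity, non-negativity, aperiodicity, uniqueness of $\tau$) are precisely what the paper omits by appealing to Remark~\ref{rk:usingThm}.
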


\begin{proof}
Denoting by $\tau \approx 0.12117$ the unique solution of the characteristic equation $\phi(z)-z\phi'(z)=0$ in the disk of convergence of $\phi$, and $\rho = \frac{\tau}{\phi(\tau)} \approx 0.06698$, we have:
$$[z^n]U(z) \sim \sqrt{\frac{\phi(\tau)}{2\phi''(\tau)}} \frac{\rho^{-n}}{\sqrt{\pi n^3}}\textrm{.}
$$

Using the Stirling estimate of the factorial, we get:
$$u_n \sim \left(\frac{n}{e}\right)^n \sqrt{2 \pi n} \sqrt{\frac{\phi(\tau)}{2\phi''(\tau)}} \frac{\rho^{-n}}{\sqrt{\pi n^3}} \sim \frac{n^{n-1}}{(e\rho)^n}\sqrt{\frac{\phi(\tau)}{\phi''(\tau)}}\textrm{.}
$$

Replacing $\tau$ and $\rho$ by their numerical approximations, we get the announced result. 
\end{proof}

\subsection{Refined enumeration formula and asymptotic distribution of parameters}
 Consider the refined generating function $ {U}(z,x,y)$ 
 for unrooted level-2 networks, 
 where the variable $z$ counts the size as before, 
 the variable $x$ counts the number of bridgeless components of level 1 or 2 
 (or equivalently, the number of level-1 or level-2 generators in the decomposition of these networks), 
 and the variable $y$ counts the number of inner edges, defined as the total number of edges across all level-1 and level-2 bridgeless components. 
 The specification provided in the Appendix can be refined for these statistics, 
 yielding the following equation for $ U := {U}(z,x,y)$:  

\begin{align}
 	{U}=z&+\frac{{U}^2}{2}+\frac{xy^3{U}^2}{2(1-y {U})}+\frac{xy^6{U}^2}{2(1-y {U})}+\frac{3}{2}xy^6 {U}^2+\frac{5xy^7{U}^3}{2(1-y {U})}+\frac{5xy^8{U}^4}{4(1-y {U})^2} \nonumber \\
 	&+xy^7{U}^3+\frac{3xy^{8}{U}^4}{1-y {U}}+\frac{3xy^{9}{U}^5}{(1-y {U})^2}+\frac{xy^{10}{U}^6}{(1-y {U})^3}+\frac{xy^8{U}^4}{4}+\frac{xy^{9}{U}^5}{1-y {U}} \nonumber\\
 	&+\frac{3xy^{10}{U}^6}{2(1-y {U})^2}+\frac{xy^{11}{U}^7}{(1-y {U})^3}+\frac{xy^{12}{U}^8}{4(1-y {U})^4}.
  \label{eq:multi_unrooted2}
 \end{align}

From the above equation, and similarly to Proposition~\ref{prop:refinedEnumerationRooted1}, it would be possible (although computations and result are not reported in this paper) to derive an explicit formula for the number of unrooted level-2 networks with $n$ leaves, $k$ bridgeless components of level 1 or 2, and $m$ edges across them. 
Furthermore, some information on the asymptotic behavior of these parameters can be obtained from  Eq.~\eqref{eq:multi_unrooted2}. 

\begin{proposition}
Let $X_n$ (resp. $Y_n$) be the random variable counting the number of 
level-1 or level-2 bridgeless components (resp. the number of edges across them)
in unrooted level-$2$ networks with $n+1$ leaves. 
Both $X_n$ and $Y_n$ are asymptotically normally distributed, 
and more precisely, we have 
\[
\mathbb{E} {X_n}= \mu_X n+ O(1), \;\;\;\;\;\; \mathbb{V}ar{X_n}=\sigma_X^{2} n+O(1) \;\;\;\; and \;\;\;\;\; \dfrac{X_{n}-\mathbb{E} {X_n}}{\sqrt{\mathbb{V}ar{X_n}}}\xrightarrow{d} \mathcal{N}(0,1),
\]
\[
\mathbb{E} {Y_n}= \mu_Y n+ O(1), \;\;\;\;\;\; \mathbb{V}ar{Y_n}=\sigma_Y^{2} n+O(1) \;\;\;\; and \;\;\;\;\; \dfrac{Y_{n}-\mathbb{E} {Y_n}}{\sqrt{\mathbb{V}ar{Y_n}}}\xrightarrow{d} \mathcal{N}(0,1),
\]
where $\mu_X \approx 0.69944 $, $\sigma_X^2 \approx 0.16919 $, $\mu_Y \approx 4.01349 $ and $\sigma_Y^2 \approx 4.68675 $. 
\label{prop:normaldistribunrooted2}
\end{proposition}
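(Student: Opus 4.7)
The plan is to apply Theorem~\ref{Drmota} twice, once for $X_n$ and once for $Y_n$, following exactly the template used in the proofs of Propositions~\ref{prop:normaldistribunrooted1} and~\ref{prop:normaldistribrooted2} (as suggested by Remark~\ref{rk:usingThm}). For $X_n$, I would set $y=1$ in Eq.~\eqref{eq:multi_unrooted2}, put $U(z,x) := U(z,x,1)$, and rewrite the equation as $U(z,x) = F(U(z,x), z, x)$, where $F(U,z,x)$ is the right-hand side of Eq.~\eqref{eq:multi_unrooted2} specialised at $y=1$. For $Y_n$, I would symmetrically set $x=1$, define $U(z,y) := U(z,1,y)$, and obtain an analogous equation $U = F(U,z,y)$ with an adjusted $F$.

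Next I would verify the hypotheses of Theorem~\ref{Drmota}. In each case $F$ is a rational function of its three arguments whose denominator does not vanish at the origin, hence is analytic there. Substitution shows $F(U,0,x)=0$ (since every summand of the right-hand side of Eq.~\eqref{eq:multi_unrooted2} other than $z$ contains a factor $U$, and setting $z=0$ forces $U=0$) and $F(0,z,x)=z \neq 0$. Non-negativity of the coefficients $[z^n U^m] F(U,z,1)$ follows from the fact that $F$ is assembled using only sums and products of $z$, $U$ and the quasi-inverse operator $A \mapsto 1/(1-A)$, which all have non-negative coefficients, exactly as in the proof of Proposition~\ref{prop:normaldistribunrooted1}.

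The key step is to solve the system $U = F(U,z,1)$, $1 = F_U(U,z,1)$. Since setting the secondary variable to $1$ recovers the univariate specification of Theorem~\ref{thm:GF_Unrooted_2}, the solution $(U_0, z_0)$ coincides in both cases with the pair arising in Proposition~\ref{prop:asym_Unrooted_2}, namely $U_0 = \tau \approx 0.12117$ and $z_0 = \rho \approx 0.06698$. I would check numerically in the companion Maple worksheet that $F_z(U_0, z_0, 1)$ and $F_{UU}(U_0, z_0, 1)$ are both non-zero, so that Theorem~\ref{Drmota} applies and yields the asymptotic normality of $X_n$ and of $Y_n$. The announced values of $\mu_X, \sigma_X^2, \mu_Y, \sigma_Y^2$ are then obtained by plugging $(U_0, z_0)$ into the explicit formulas for $\mu$ and $\sigma^2$ given in the statement of Theorem~\ref{Drmota}.

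The main obstacle, though essentially computational rather than conceptual, is evaluating the various second-order partial derivatives of $F$ at $(U_0, z_0, 1)$ that enter the explicit formula for $\sigma^2$: because $F$ has fifteen summands, several with denominators of the form $(1-U)^k$ up to $k=4$, the expressions are lengthy and are best handled in the Maple worksheet. The only potential analytic subtlety is to confirm that $\sigma_X^2$ and $\sigma_Y^2$ come out strictly positive so that the normal-limit conclusion of Theorem~\ref{Drmota} is non-degenerate; the numerical values above show this is the case.
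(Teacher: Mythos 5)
Your overall strategy is exactly the paper's: specialise Eq.~\eqref{eq:multi_unrooted2} at $y=1$ (resp.\ $x=1$), apply Theorem~\ref{Drmota}, and note that the critical pair $(U_0,z_0)$ coincides with $(\tau,\rho)$ from Proposition~\ref{prop:asym_Unrooted_2}. There is, however, one step that fails as written: your choice of $F$. If $F(U,z,x)$ is literally the right-hand side of Eq.~\eqref{eq:multi_unrooted2} at $y=1$, then $F(U,0,x)=\tfrac{U^2}{2}+\tfrac{x U^2}{1-U}+\tfrac{3x}{2}U^2+\cdots$, which is \emph{not} identically zero, so the hypothesis $F(C,0,x)=0$ of Theorem~\ref{Drmota} is violated. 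Your justification (``setting $z=0$ forces $U=0$'') conflates the solution $U(z,x)$ of the functional equation, which does vanish at $z=0$, with the free variable $U$ of the trivariate function $F$, which does not. The paper avoids this by first rewriting the specification in the form $U=z\,\phi(U,x)$ (in the spirit of Theorem~\ref{thm:GF_Unrooted_2}) and taking $F(U,z,x)=z\cdot\frac{1}{1-A(U,x)}$ for an explicit series $A$ with non-negative coefficients; the prefactor $z$ makes $F(U,0,x)=0$ immediate, while the quasi-inverse form still yields the non-negativity of the coefficients $[z^nU^m]F(U,z,1)$ exactly as you argue. With that (small but necessary) change of $F$, the remainder of your argument --- the numerical solution of the system, the checks $F_z\neq 0$ and $F_{UU}\neq 0$ at $(U_0,z_0,1)$, and the extraction of $\mu$ and $\sigma^2$ via the companion worksheet --- is precisely the paper's proof.
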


\begin{proof}
Consider first $X_n$. 
Defining $U(z,x) := U(z,x,1)$, it holds that 
$$
\mathbb{E} x^{X_n}= \dfrac{[z^n]U(z,x)}{[z^n]U(z,1)}.
$$
It follows from the equation for $U(z,x,y)$ that $U(z,x) = F(U(z,x),z,x)= z \frac{1}{1-A(U(z,x),z,x)}$ with 
\begin{align*}
A(U,z,x) =& 
\frac{{U}}{2}+\frac{x{U}}{2(1-{U})}+\frac{x{U}}{2(1-{U})}+\frac{3x {U}}{2}+\frac{5x{U}^2}{2(1- {U})}+\frac{5x{U}^3}{4(1- {U})^2}+x{U}^2\\ 	&+\frac{3x{U}^3}{1-{U}}+\frac{3x{U}^4}{(1-{U})^2}+\frac{x{U}^5}{(1-{U})^3}+\frac{x{U}^3}{4}+\frac{x{U}^4}{1-{U}}+\frac{3x{U}^5}{2(1-{U})^2}\\
&+\frac{x{U}^6}{(1-{U})^3}+\frac{x{U}^7}{4(1- {U})^4}.
 \end{align*}
It is readily checked that $F$ satisfies all hypotheses of Theorem~\ref{Drmota}. 
The system 
\begin{align*}
U=&F(U,z,1) \\
1=&F_U(U,z,1)
\end{align*}
admits a solution $(U_0,z_0)$ such that $U_0 \approx 0.1212$ and $z_0 \approx 0.06698$, 
which satisfies the hypothesis of Theorem~\ref{Drmota}.
The result then follows from Theorem~\ref{Drmota}, 
and the numerical estimates of $\mu_X$ and $\sigma_X^2$ 
are obtained plugging the numerical estimates for $U_0$ and $z_0$ into the explicit formulas given  by Theorem~\ref{Drmota}. The proof for $Y_n$ follows the exact same steps, considering this time $U(z,y) := U(z,1,y)$ instead, and adjusting the definition of $F$ accordingly. 
Again, as expected, the solution $(U_0,z_0)$ of the associated system is the same as above.
\end{proof}

\section{Counting rooted level-2 networks}\label{sec:rooted2}
        
\subsection{Combinatorial specification and generating function}
To derive a specification for rooted level-2 networks, 
we distinguish cases depending on the level ($0$, $1$ or $2$) of the generator to which the root belongs. 
The cases corresponding to levels $0$ and $1$ will be the same as in Section~\ref{sec:rooted1}. 
When the root of a rooted level-2 network belongs to a level-$2$ generator, 
we have to remember that these generators have one vertex which is above all their other vertices (which is the root of the network) and not just one but two reticulation vertices. 
As for rooted level-1 networks, it is important to keep in mind that 
any bridgeless component of level $2$ in a rooted level-2 network has 
at least two outgoing cut arcs 
(since otherwise there would be an infinite number of such networks with a given number of leaves).

We denote by $\mathcal{L}$ the set of rooted level-2 networks, 
where the size corresponds to the number of leaves. 
And we denote by $L(z)$ the corresponding exponential generating function. 
Distinguishing on the level (0, 1 or 2) of the bridgeless component containing the root, 
we can see that any 
network $N$ of $\mathcal{L}$ satisfies exactly one of the following (see Fig.~\ref{rootedlevel2}). 
\begin{itemize}
    \item  $N$ is just a leaf. 
    This contributes $z$ to the generating function (case $0a$). 
    \item The root of $N$ belongs to a bridgeless component of level 0, 
    that is to say it is a binary root vertex. 
    Its children are themselves networks of $\mathcal{L}$ whose left-to-right order is irrelevant. 
    This contributes $\frac{L^2}{2}$ to the generating function (case $0b$). 
    \item The root of $N$ belongs to a bridgeless component of level 1. 
    This case splits into two subcases, just as in Section~\ref{sec:rooted1}.  
    \begin{itemize}
 		\item Either $N$ consists of a cycle whose reticulation vertex is attached to a network of $\mathcal{L}$, is a child of the root and is the lowest vertex of a path from the root where a sequence of at least one network of $\mathcal{L}$ is attached. This contributes $\frac{L^2}{1-L}$ to the generating function (case $1a$). 
 		\item Or $N$ consists of a cycle whose reticulation vertex is attached to a network of $\mathcal{L}$, and a sequence of at least one network of $\mathcal{L}$ is attached to each path of this cycle (case $1b$).
 		This contributes $\frac{L}{2}\left(\frac{L}{1-L}\right)^2$. 
 	\end{itemize}  
    \item The root of $N$ belongs to a bridgeless component of level 2. 
    The level-2 generators are displayed in Fig.~\ref{rootedlevel2}, cases $2a$ to $2d$. 
    From these generators, the networks whose root belong to a bridgeless component of level 2 are obtained 
    attaching networks of $\mathcal{L}$ to their reticulation vertex or vertices with out-degree 0,
    and possibly replacing their arcs with sequences of at least one network of $\mathcal{L}$. Note that in cases $2b$ and $2d$, depending on our choices for such arcs, we may have to cope with horizontal and vertical symmetry. 
    We study these cases in order, and find their contribution to the generating function $L(z)$. 
\end{itemize}
 
\begin{figure}[h]
	\begin{center}
		\begin{tikzpicture}[
		scale=0.7,
		level/.style={thick},
		virtual/.style={thick,densely dashed},
		trans/.style={thick,<->,shorten >=2pt,shorten <=2pt,>=stealth},
		classical/.style={thin,double,<->,shorten >=4pt,shorten <=4pt,>=stealth}
		]
		\draw (-4cm,0cm) node[circle,inner sep=1,fill,draw] (1) {};
		\draw (-4.5cm,-1cm) node[circle,inner sep=1,fill,draw] (2) {};
		\draw (-3.5cm,-1cm) node[circle,inner sep=1,fill,draw] (3) {};
		\draw (-4.5cm,-1.5cm) node[inner sep=1.5pt,circle] (4) {${\mathcal L}$};
		\draw (-3.5cm,-1.5cm) node[inner sep=1.5pt,circle] (4) {${\mathcal L}$};
		\draw (-4cm,-2cm) node[inner sep=1.5pt,circle] (5) {$\tiny{\overleftrightarrow{{sym}}}$};
    	\draw (1)--(2);
		\draw (1)--(3);

		\draw (-2cm,0cm) node[circle,inner sep=1,fill,draw] (1) {};
		\draw (-2cm,-1.5cm) node[circle,inner sep=1,fill,draw] (2) {};
		\draw (-2cm,-2cm) node[circle,inner sep=1,fill,draw] (3) {};
		\draw (-2cm,-2.5cm) node[inner sep=1.5pt,circle] (4) {${\mathcal L}$};
		\draw  [blue,->] (1) to[out=0,in=0] (2);
		\draw  [black,-] (1) to[out=180,in=180] (2);
		\draw (2)--(3);

		\draw (0cm,0cm) node[circle,inner sep=1,fill,draw] (1) {};
		\draw (0cm,-1.5cm) node[circle,inner sep=1,fill,draw] (2) {};
		\draw (0cm,-2cm) node[circle,inner sep=1,fill,draw] (3) {};
		\draw (0cm,-2.5cm) node[inner sep=1.5pt,circle] (4) {${\mathcal L}$};
		\draw  [blue,->] (1) to[out=0,in=0] (2);
		\draw  [red,->] (1) to[out=180,in=180] (2);
		\draw (0cm,-.8cm) node[inner sep=1.5pt,circle] (5) {$\tiny{\overleftrightarrow{{sym}}}$};
		\draw (2)--(3);
		
		\draw (-6.5cm,-3.5cm) node[circle,inner sep=1,fill,draw] (1) {};
		\draw (-5.8cm,-4cm) node[circle,inner sep=1,fill,draw] (2) {};
		\draw (-7cm,-4.5cm) node[circle,inner sep=1,fill,draw] (3) {};
		\draw (-6.5cm,-5cm) node[circle,inner sep=1,fill,draw] (4) {};
		\draw (-6.5cm,-5.5cm) node[circle,inner sep=1,fill,draw] (5) {};
		\draw (-6.5cm,-6cm) node[inner sep=1.5pt,circle] (6) {${\mathcal L}$};
		\draw (1)--(2);\draw (1)--(3);\draw (2)--(3);\draw (2)--(4);\draw (3)--(4);\draw (4)--(5);

	\draw (-3cm,-3.5cm) node[circle,inner sep=1,fill,draw] (1) {};
	\draw (-4cm,-4cm) node[circle,inner sep=1,fill,draw] (2) {};
	\draw (-4cm,-5cm) node[circle,inner sep=1,fill,draw] (3) {};
	\draw (-3cm,-5.5cm) node[circle,inner sep=1,fill,draw] (4) {};
	\draw (-3cm,-6.2cm) node[circle,inner sep=1,fill,draw] (5) {};
	\draw (-3cm,-6.7cm) node[inner sep=1.5pt,circle] (6) {${\mathcal L}$};
	\draw (1)--(2);\draw (1)--(4);\draw (3)--(4);\draw (4)--(5);
	\draw  [black,-] (2) to[out=0,in=0] (3);
	\draw  [black,-] (2) to[out=180,in=180] (3);
	\draw (-4.6cm,-4.6cm) node[inner sep=1.5pt,circle] (7) {$e$};
	\draw (-3.3cm,-4.5cm) node[inner sep=1.5pt,circle] (8) {$e^\prime$};
	\draw (-4cm,-5.5cm) node[inner sep=1.5pt,circle] (9) {$\tiny{\overleftrightarrow{{sym}}}$};

	\draw (-.5cm,-3.5cm) node[circle,inner sep=1,fill,draw] (1) {};
	\draw (-1cm,-4cm) node[circle,inner sep=1,fill,draw] (2) {};
	\draw (.2cm,-5cm) node[circle,inner sep=1,fill,draw] (3) {};
	\draw (-.5cm,-4.5cm) node[circle,inner sep=1,fill,draw] (4) {};
	\draw (-1cm,-5cm) node[circle,inner sep=1,fill,draw] (5) {};
	\draw (-1cm,-5.5cm) node[circle,inner sep=1,fill,draw] (6) {};
	\draw (.2cm,-5.5cm) node[circle,inner sep=1,fill,draw] (7) {};
	\draw (-1cm,-6cm) node[inner sep=1.5pt,circle] (8) {${\mathcal L}$};
	\draw (.2cm,-6cm) node[inner sep=1.5pt,circle] (9) {${\mathcal L}$};
	\draw (1)--(2);\draw (1)--(3);\draw (2)--(4);\draw (2)--(5);\draw (4)--(3);\draw (4)--(5);\draw (3)--(7);\draw (5)--(6);
	
		
		\draw (2cm,-3.5cm) node[circle,inner sep=1,fill,draw] (1) {};
		\draw (1cm,-4cm) node[circle,inner sep=1,fill,draw] (2) {};
		\draw (3cm,-4cm) node[circle,inner sep=1,fill,draw] (3) {};
		\draw (2cm,-4.5cm) node[circle,inner sep=1,fill,draw] (4) {};
		\draw (2cm,-6.5cm) node[circle,inner sep=1,fill,draw] (5) {};
		\draw (2cm,-5.2cm) node[circle,inner sep=1,fill,draw] (6) {};
		\draw (2cm,-5.7cm) node[inner sep=1.5pt,circle] (8) {${\mathcal L}$};
		\draw (2cm,-7.5cm) node[circle,inner sep=1,fill,draw] (7) {};
		\draw (2cm,-8cm) node[inner sep=1.5pt,circle] (9) {${\mathcal L}$};
	    \draw (1)--(2);\draw (1)--(3);\draw (2)--(4);\draw (3)--(4);\draw (4)--(6);\draw (2)--(5);\draw (3)--(5);\draw (5)--(7);
		\draw (1.3cm,-3.6cm) node[inner sep=1.5pt,circle] (10) {$e_{1}$};
		\draw (2.7cm,-3.6cm) node[inner sep=1.5pt,circle] (11) {$e_{1}^\prime$};
		\draw (1.6cm,-4.6cm) node[inner sep=1.5pt,circle] (12) {$e_{2}$};
		\draw (2.4cm,-4.6cm) node[inner sep=1.5pt,circle] (13) {$e_{2}^\prime$};
		\draw (1.3cm,-5.5cm) node[inner sep=1.5pt,circle] (14) {$e_{3}$};
		\draw (2.7cm,-5.5cm) node[inner sep=1.5pt,circle] (15) {$e_{3}^\prime$};
		\draw (2cm,-4.1cm) node[inner sep=1.5pt,circle] (16) {$\tiny{\overleftrightarrow{{sym}}}$};
		
	\draw (2.5cm,-4.2cm) node[inner sep=1.5pt,circle] (17) {};
		\draw (2.2cm,-6cm) node[inner sep=1.5pt,circle] (18) {};
		\draw (3.6cm,-5.5cm) node[inner sep=1.5pt,circle] (19) {$ \tiny{sym} $};
		\draw  [black,<->] (17) to[out=-20,in=-20] (18);

		\draw (.5cm,-4cm) node[inner sep=1.5pt,circle] (1) {$\biguplus$};
		\draw (-2cm,-4cm) node[inner sep=1.5pt,circle] (1) {$\biguplus$};
		\draw (-5cm,-4cm) node[inner sep=1.5pt,circle] (2) {$\biguplus$};
		\draw (-8cm,-4cm) node[inner sep=1.5pt,circle] (3) {$\biguplus$};
		\draw (-1cm,-.5cm) node[inner sep=1.5pt,circle] (4) {$\biguplus$};
		\draw (-3cm,-.5cm) node[inner sep=1.5pt,circle] (5) {$\biguplus$};
		\draw (-5cm,-.5cm) node[inner sep=1.5pt,circle] (6) {$\biguplus$};
		\draw (-6cm,-.5cm) node[inner sep=1.5pt,circle] (7) {$\bullet$};
		\draw (-8cm,-.5cm) node[inner sep=1.5pt,circle] (8) {$=$};
		\draw (-9cm,-.5cm) node[inner sep=1.5pt,circle] (9) {${\mathcal L}$};
		
			\draw (-6cm,.5cm) node[inner sep=1.5pt,circle] (7) {$0a$};
		   \draw (-4cm,.5cm) node[inner sep=1.5pt,circle] (7) {$0b$};
		   \draw (-2cm,.5cm) node[inner sep=1.5pt,circle] (7) {$1a$};
		   \draw (0cm,.5cm) node[inner sep=1.5pt,circle] (7) {$1b$};
		   \draw (-6.5cm,-7cm) node[inner sep=1.5pt,circle] (7) {$2a$};
		   \draw (-3.5cm,-7cm) node[inner sep=1.5pt,circle] (7) {$2b$};
		   \draw (-.5cm,-7cm) node[inner sep=1.5pt,circle] (7) {$2c$};
		   \draw (1.5,-7cm) node[inner sep=1.5pt,circle] (7) {$2d$};
		\end{tikzpicture}
		
	\end{center}	
	\caption{The specification of the class $\mathcal L$.}	
	\label{rootedlevel2}
\end{figure}
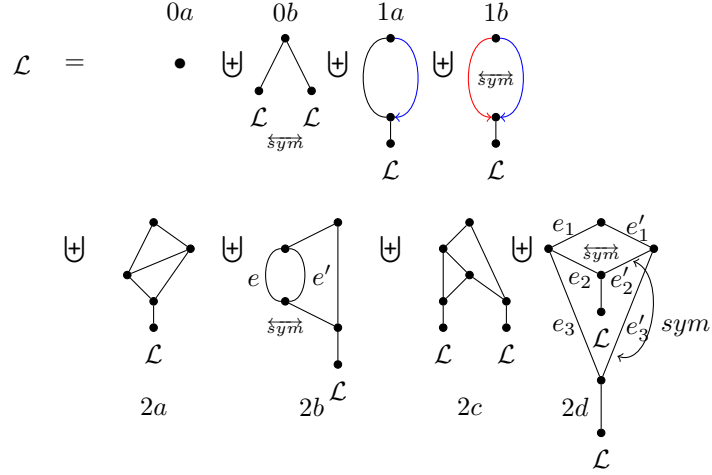

\begin{itemize}[leftmargin=2cm]
	\item[--]  We first deal with the case where the level-2 generator 
	to which the root belongs is of type $2a$. 
	This generator has $5$ internal arcs, all distinguished from each other by the structure of the generator.
	A network of $\mathcal{L}$ is attached to its reticulation vertex of out-degree 0.
	Moreover, recalling that each bridgeless component must be have at least two outgoing cut arcs, 
	at least one of the five internal arcs of the generator must carry a non-empty sequence of networks of $\mathcal{L}$. 
	Therefore, the contribution of case $2a$ to the generating function of $\mathcal{L}$ is 
	\[  {L}\cdot \sum_{i=1}^5 \binom{5}{i}  \left(\dfrac{{L}}{1-{L}}\right)^i. \]
	
	\item[--]  In the case where the level-2 generator 
	to which the root belongs is of type $2b$, we similarly have $5$ internal arcs in the generator, 
	at least one of which must be replaced by a non-empty sequence of networks of $\mathcal{L}$. 
	However, the two arcs $e$ and $e^\prime$ are not distinguishable. 
	The contribution to the generating function is therefore more subtle to analyze, 
	and we perform this detailed analysis in the Section~\ref{appendix-rooted-l2} of the Appendix. 
	The overall contribution of case $2b$ to the generating function of $\mathcal{L}$ is then shown to be 
	{\fontsize{8}{10}\selectfont
	\begin{align*} 
{L} \frac{{L}}{1-{L}} 
	+\frac{7}{2}  {L} \left(\dfrac{{L}}{1-{L}}\right)^2
	+\frac{9}{2} {L}  \left(\dfrac{{L}}{1-{L}}\right)^3 
	+ \frac52 {L}      \left(\dfrac{{L}}{1-{L}}\right)^4
    +\frac{1}{2} {L}   \left(\dfrac{{L}}{1-{L}}\right)^5.
	\end{align*}
}
\item[--] We now consider the case where the level-2 generator 
	to which the root belongs is of type $2c$. 
	This generator has $6$ internal arcs, all distinguished from each other by the structure of the generator.
	Moreover, two networks of $\mathcal{L}$ are attached to its reticulation vertices, 
	so that the condition that each bridgeless component must be have at least two outgoing cut arcs is already satisfied. 
	Therefore, all $6$ internal arcs of the generator carry possibly empty sequences of networks of $\mathcal{L}$. 
	As a consequence, the contribution of case $2c$ to the generating function of $\mathcal{L}$ is 
	$${L}^2 \left(\dfrac{1}{1-{L}}\right)^6. $$
	\item[--] Similarly, when the root belongs to a level-2 generator of type $2d$, 
the $6$ arcs of the generator carry possibly empty sequences of networks of $\mathcal{L}$. 
However, this generator enjoys both a horizontal symmetry (mapping $e_i$ to $e_i^\prime$ for $i=1,2,3$) 
and a vertical symmetry (exchanging the indices $2$ and $3$ and the corresponding pending networks of $\mathcal{L}$). 
In case all arcs carry empty sequences, the horizontal symmetry is actually the identity, so that only the vertical symmetry applies, yielding a factor $\tfrac{1}{2}$. 
Otherwise, both the horizontal and the vertical symmetry need to be taken into account, yielding a factor $\tfrac{1}{4}$. 
The total contribution of case $2d$ to the generating function of $\mathcal{L}$ is therefore 
\[ \frac{1}{2}L^2 +  \dfrac{1}{4} {L}^2\cdot \sum_{i=1}^6 \binom{6}{i} \left(\dfrac{{L}}{1-{L}}\right)^i . \]
  \end{itemize}

Following this case analysis we obtain an equation characterizing the generating function of $ \mathcal{L} $. 

\begin{theorem}\label{thm:GF_rooted_level2}
The exponential generating function $L(z)$ of rooted level-2 networks counted by number of leaves satisfies
\begin{align*}
{L}=z&+{L}^2+\frac{7{L}^2}{1-{L}}+\frac{3{L}^3}{2(1-{L})} + \frac{14{L}^3}{(1-{L})^2}+\frac{15{L}^4}{4(1-{L})^2} + \frac{29{L}^4}{2(1-{L})^3}+\frac{5{L}^5}{(1-{L})^3}\\ 
&+\frac{15{L}^5}{2(1-{L})^4}+\frac{15{L}^6}{4(1-{L})^4}+\frac{3{L}^6}{2(1-{L})^5} +\frac{3{L}^7}{2(1-{L})^5} +\frac{{L}^2}{(1-{L})^6}+\frac{{L}^8}{4(1-{L})^6},
\end{align*}
or equivalently 
\[  {L}(z)= z\phi( {L}(z)) \;\;\; where\;\;\; \phi(z)=\dfrac{1}{1-\frac{36z-102z^2+159z^3-148z^4+81z^5-24z^6+3z^7}{4(1-z)^6}}.\] 
\end{theorem}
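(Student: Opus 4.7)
The approach is to translate the combinatorial specification laid out in the preceding case analysis into an equation for the exponential generating function $L(z)$ via Proposition~\ref{dictionary}. Concretely, I would collect, one by one, the eight contributions from cases $0a$, $0b$, $1a$, $1b$, $2a$, $2b$, $2c$, $2d$. Cases $0a$, $0b$, $1a$, $1b$ reproduce exactly the four terms appearing in the rooted level-$1$ specification of Section~\ref{sec:rooted1}, contributing $z + \tfrac{L^2}{2} + \tfrac{L^2}{1-L} + \tfrac{L^3}{2(1-L)^2}$ (with $L$ in place of $R$). Cases $2a$ and $2d$ involve binomial sums in $L/(1-L)$ which simplify using the identity $\sum_{i=0}^{n}\binom{n}{i}\bigl(\tfrac{L}{1-L}\bigr)^i = (1-L)^{-n}$; thus $L \cdot \sum_{i=1}^{5}\binom{5}{i}\bigl(\tfrac{L}{1-L}\bigr)^i = L\bigl((1-L)^{-5}-1\bigr)$ and analogously for case $2d$. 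Case $2c$ contributes $L^2/(1-L)^6$ directly, and case $2b$ yields the five rational terms displayed in the case analysis, whose symmetry-aware justification is deferred to the appendix.

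With these contributions in hand, I would sum them and group by the denominator $(1-L)^{-k}$ for $k = 0, 1, \dots, 6$, which should produce exactly the first displayed equation of the theorem. To deduce the equivalent form $L(z) = z\phi(L(z))$, I would rearrange this equation as $L - z = L \cdot \Psi(L)$, where $\Psi(L)$ denotes the sum of all non-$z$ terms divided by $L$. Solving for $L$ then gives $L = z/(1 - \Psi(L))$, so that $\phi(L) = 1/(1 - \Psi(L))$. Placing $1 - \Psi(L)$ over the common denominator $4(1-L)^6$ and collecting should yield a numerator equal to $4(1-L)^6 - (36L - 102L^2 + 159L^3 - 148L^4 + 81L^5 - 24L^6 + 3L^7)$, giving the stated form of $\phi$.

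The main obstacle is purely computational: there are on the order of fifteen rational terms to add, each supported on a different power of $(1-L)$ in the denominator, and bringing the final expression over the common denominator $4(1-L)^6$ while expanding $(1-L)^6$ and tracking the coefficients of $L^k$ for small $k$ is mechanical but extensive. A symbolic computation system handles this routinely, and indeed the authors refer to their companion Maple worksheet for precisely such manipulations. Beyond the bookkeeping, the only conceptual subtlety lies in the symmetry factors of cases $2b$ and $2d$, which require careful accounting of the automorphism groups of the underlying level-$2$ generators, with a separate treatment of ``symmetric'' configurations (those in which some automorphism acts trivially on the arc fillings) as compared to generic configurations.
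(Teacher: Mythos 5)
Your proposal is correct and follows essentially the same route as the paper: translating the eight cases $0a$--$2d$ of the specification into generating-function contributions via the dictionary, summing them (using $\sum_{i}\binom{n}{i}\bigl(\tfrac{L}{1-L}\bigr)^{i}=(1-L)^{-n}$ for cases $2a$ and $2d$), and then rearranging $L=z+L\,\Psi(L)$ into $L=z\phi(L)$ over the common denominator $4(1-L)^{6}$. The individual contributions you list match those in the paper, and their sum does reduce to the displayed equation, so the remaining work is exactly the mechanical bookkeeping you identify.
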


We therefore obtain the first terms of the series expansion of $L(z)$, 
\[
 {L}(z)=z+9z^2+\frac{381}{2}z^3+\frac{20013}{4}z^4+\frac{588119}{4}z^5+\frac{37023465}{8}z^6+\cdots,
\]
as reported in Table~\ref{TableNumbers}. 

\subsection{Exact enumeration formula}

As in the previous sections, Theorem~\ref{thm:GF_rooted_level2} 
allows to derive an explicit formula for the number $\ell_n$ of rooted level-2 phylogenetic networks with $n$ leaves. This complicated formula is given in Appendix.

\subsection{Asymptotic evaluation}

Similarly, from Theorem~\ref{thm:GF_rooted_level2}, we can also derive the asymptotic behavior of $\ell_n$. 

\begin{proposition}\label{asy_rooted2}
The number $ \ell_n $ of rooted level-2 phylogenetic networks  with $ n $ leaves behaves asymptotically as
\[
\ell_n \sim c_{1}c_{2}^n n^{n-1}, 
\]
where $c_{1}\approx 0.02931$ and $ c_{2}\approx 15.433$.
\end{proposition}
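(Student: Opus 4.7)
The plan is to apply the Singular Inversion Theorem (Theorem~\ref{thm:singular_inversion}) to the equation $L(z) = z\phi(L(z))$ provided by Theorem~\ref{thm:GF_rooted_level2}, following exactly the same template as the proofs of Propositions~\ref{prop:asym_Unrooted_1} and~\ref{prop:asym_Unrooted_2}. As emphasized in Remark~\ref{rk:usingThm}, nothing new is required conceptually; only the numerical data and the verification of hypotheses for this specific $\phi$ differ.

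First, I would check the hypotheses of Theorem~\ref{thm:singular_inversion}. Being a rational fraction, $\phi(z) = \tfrac{4(1-z)^6}{4(1-z)^6 - (36z - 102z^2 + 159z^3 - 148z^4 + 81z^5 - 24z^6 + 3z^7)}$ is analytic at $0$, and its radius of convergence $R$ equals the modulus of its pole of smallest absolute value, which is computed numerically in the companion Maple worksheet. Aperiodicity follows from footnote~\ref{footnote:aperiodic}, since all Taylor coefficients of $\phi$ at $0$ are positive. The characteristic equation $\phi(z) - z\phi'(z) = 0$ then reduces to a polynomial equation, and I would locate numerically its unique solution $\tau \in (0,R)$ using the same worksheet.

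Setting $\rho = \tau/\phi(\tau)$, Theorem~\ref{thm:singular_inversion} gives $[z^n]L(z) \sim \sqrt{\phi(\tau)/(2\phi''(\tau))} \cdot \rho^{-n}/\sqrt{\pi n^3}$. Combining this with $\ell_n = n!\,[z^n]L(z)$ and the Stirling estimate $n! \sim (n/e)^n \sqrt{2\pi n}$ yields
$$\ell_n \sim \frac{n^{n-1}}{(e\rho)^n} \sqrt{\frac{\phi(\tau)}{\phi''(\tau)}},$$
so that $c_2 = 1/(e\rho) \approx 15.433$ and $c_1 = \sqrt{\phi(\tau)/\phi''(\tau)} \approx 0.02931$.

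The only potential obstacle is the numerical nature of the computations: unlike the nicely algebraic level-1 rooted case of Proposition~\ref{prop:asym_Rooted_1}, here the characteristic equation is a polynomial of relatively high degree, so $\tau$ and $\rho$ are not expected to have pleasant closed forms. One must therefore rely on Maple to extract the relevant root, verify that it lies in the disk of convergence of $\phi$, and check that $\phi''(\tau) \neq 0$ so that the asymptotic expansion is well defined; all of this is routine but depends on the companion worksheet.
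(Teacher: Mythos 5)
Your proposal follows exactly the paper's argument: apply the Singular Inversion Theorem to $L(z)=z\phi(L(z))$ from Theorem~\ref{thm:GF_rooted_level2}, locate $\tau\approx 0.0445$ and $\rho=\tau/\phi(\tau)\approx 0.0238$ numerically, and conclude via $\ell_n=n!\,[z^n]L(z)$ and Stirling, yielding $c_2=1/(e\rho)$ and $c_1=\sqrt{\phi(\tau)/\phi''(\tau)}$. This is correct and matches the paper's proof, including its reliance on the companion Maple worksheet for the numerical verifications.
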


\begin{proof}
Recall that 
 \[
  {L}(z)= z\phi( {L}(z)) \;\;\; where\;\;\; \phi(z)=\dfrac{1}{1-\frac{36z-102z^2+159z^3-148z^4+81z^5-24z^6+3z^7}{4(1-z)^6}}.
 \]
Denoting by $\tau \approx 0.0445$ the unique solution of the characteristic equation $\phi(z)-z\phi'(z)=0$ in the disk of convergence of $\phi$, and $\rho = \frac{\tau}{\phi(\tau)} \approx 0.0238$, 
the Singular Inversion Theorem gives:
$$[z^n] {L}(z) \sim \sqrt{\frac{\phi(\tau)}{2\phi''(\tau)}} \frac{{\rho}^{-n}}{\sqrt{\pi n^3}}\textrm{.}
$$
Like before, we get the claimed result from $\ell_n = n! [z^n] {L}(z)$ and the Stirling estimate of the factorial.
\end{proof}

\subsection{Refined enumeration formula and asymptotic distribution of parameters}
Let ${L}(z,x,y)= \sum_{n,k,m} \ell_{n,k,m} \frac{z^n}{n!} x^k y^m$ be the multivariate generating function counting rooted level-2 networks w.r.t. their number of leaves (variable $z$), their number of bridgeless components of level 1 or 2 (variable $x$) and their number of arcs across all these (variable $y$). From the specification of $\mathcal{L}$ discussed earlier, $ {L}(z,x,y)= {L} $ is easily seen to satisfy the following equation: 
\begin{align}
{L}=z&+\frac{{L}^2}{2}+x \Big( \frac{y^6{L}^2}{2} + (y^3+6y^6)\frac{{L}^2}{1-y{L}}
+\frac{3y^7{L}^3}{2(1-y{L})} + (\tfrac{y^4}{2} + \tfrac{27y^7}{2}) \frac{{L}^3}{(1-y{L})^2} \nonumber \\
& +\frac{15y^8{L}^4}{4(1-y{L})^2}  + \frac{29y^8{L}^4}{2(1-y{L})^3}
+\frac{5y^9{L}^5}{(1-y{L})^3}+\frac{15y^9{L}^5}{2(1-y{L})^4} +\frac{15y^{10}{L}^6}{4(1-y{L})^4} \nonumber \\ 
&+\frac{3y^{10}{L}^6}{2(1-y{L})^5}
 +\frac{3y^{11}{L}^7}{2(1-y{L})^5} +\frac{y^6 {L}^2}{(1-y{L})^6}  +\frac{y^{12}{L}^8}{4(1-y{L})^6}
\Big).
\label{eq:multi_rooted2}
\end{align}

From the above equation, an explicit formula for $ \ell_{n,k,m}$ could routinely be derived, as in Proposition~\ref{prop:refinedEnumerationRooted1}, although the computations are more involved. 
We decided not to report this formula here. 
Eq.~\eqref{eq:multi_rooted2} also allows to study the asymptotic behavior of the considered parameters. 

\begin{proposition}\label{asymultirooted2}
Let $X_n$ (resp. $Y_n$) be the random variable counting the number of bridgeless components of level 1 or 2 (resp. the number of edges across them) 
in rooted level-$2$ networks with $n$ leaves. 
Both $X_n$ and $Y_n$ are asymptotically normally distributed, 
and more precisely, we have 
\[
\mathbb{E} {X_n}= \mu_{X } n+ O(1), \;\;\;\;\;\; \mathbb{V}ar{X_{n}}=\sigma^{2}_{X} n+O(1) \;\;\;\;
\]
\[
\mathbb{E} {Y_{n}}= \mu_{Y} n+ O(1), \;\;\;\;\;\; \mathbb{V}ar{Y_{n}}=\sigma^{2}_{Y} n+O(1) \;\;\;\; 
\]
where $\mu_{X} \approx 0.8243$, $\sigma^{2}_{X} \approx 0.1232$, $\mu_Y \approx 4.8133 $ and $\sigma^{2}_{Y} \approx 3.5523 $. 
\label{prop:normaldistribrooted1}
\end{proposition}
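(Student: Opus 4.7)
The plan is to apply Theorem~\ref{Drmota} in the exact same way it was applied in the proofs of Propositions~\ref{prop:normaldistribunrooted1}, \ref{prop:normaldistribrooted2} and \ref{prop:normaldistribunrooted2}, starting from the multivariate equation~\eqref{eq:multi_rooted2}. I treat $X_n$ and $Y_n$ in turn, the two arguments being symmetric modulo an exchange of the marking variables.

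For $X_n$, first I would specialize $y=1$ and set $L(z,x):=L(z,x,1)$. By construction,
\[
\mathbb{E} x^{X_n} = \frac{[z^n] L(z,x)}{[z^n] L(z,1)}.
\]
Setting $y=1$ in~\eqref{eq:multi_rooted2} rewrites the equation for $L(z,x)$ in the fixed-point form $L(z,x)=F(L(z,x),z,x)$, where $F(L,z,x)$ is obtained by replacing every occurrence of $L$ (other than the leading $z$ term) by its expression with the marking variable $x$ as in~\eqref{eq:multi_rooted2}. I would then check the four hypotheses on $F$ required by Theorem~\ref{Drmota}: analyticity in $(L,z,x)$ around $0$ (clear, as $F$ is rational with denominator $(1-L)^6$, non-vanishing at the origin), the equalities $F(L,0,x)=0$ and $F(0,z,x)\neq 0$ (both immediate from the shape $F = z + x(\cdots)$), and non-negativity of $[z^nL^m]F(L,z,1)$ (which follows from the combinatorial origin of each summand, all of which are built from products and quasi-inverses of series with non-negative coefficients, as in Proposition~\ref{dictionary}).

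Next I would solve numerically (in the companion Maple worksheet) the system $L=F(L,z,1)$, $1=F_L(L,z,1)$. Since $F(L,z,1)$ is exactly $z\phi(L)$ from Theorem~\ref{thm:GF_rooted_level2}, the system is the same one implicitly solved in the proof of Proposition~\ref{asy_rooted2}, so I expect to recover the same solution $(L_0,z_0)$ with $L_0\approx 0.0445$ and $z_0\approx 0.0238$, automatically satisfying the non-degeneracy conditions $F_z(L_0,z_0,1)\neq 0$ and $F_{LL}(L_0,z_0,1)\neq 0$. Theorem~\ref{Drmota} then delivers both the linear-in-$n$ asymptotics of $\mathbb{E}X_n$ and $\mathbb{V}\mathrm{ar}\,X_n$ and the asymptotic normality, and the numerical values $\mu_X\approx 0.8243$ and $\sigma_X^2\approx 0.1232$ are obtained by evaluating the closed-form expressions for $\mu$ and $\sigma^2$ given in Theorem~\ref{Drmota} at $(L_0,z_0,1)$.

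For $Y_n$ I would proceed identically, this time specializing $x=1$ and working with $L(z,y):=L(z,1,y)$, obtaining a new $F(L,z,y)$ from~\eqref{eq:multi_rooted2}. The hypotheses of Theorem~\ref{Drmota} are verified exactly as before, and the associated system has the same solution $(L_0,z_0)$ since setting $y=1$ in $F(L,z,y)$ gives back $z\phi(L)$; this yields $\mu_Y\approx 4.8133$ and $\sigma_Y^2\approx 3.5523$. I do not expect a genuine mathematical obstacle: in light of Remark~\ref{rk:usingThm}, all the heavy lifting is the same as in the previous applications of Theorem~\ref{Drmota}. The only mildly delicate point is bookkeeping, since the equation~\eqref{eq:multi_rooted2} has many more terms than in the level-1 and unrooted level-2 cases, so that evaluating the partial derivatives $F_x$, $F_z$, $F_{CC}$, $F_{xx}$, $F_{Cx}$, $F_{zx}$, $F_{Cz}$, $F_{zz}$ and assembling the formula for $\sigma^2$ is best delegated to the Maple worksheet referenced in the paper.
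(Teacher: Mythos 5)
Your proposal matches the paper's proof essentially verbatim: both specialize Eq.~\eqref{eq:multi_rooted2} at $y=1$ (resp.\ $x=1$), rewrite it as a fixed-point equation $L=F(L,z,x)$, verify the hypotheses of Theorem~\ref{Drmota}, and solve the characteristic system numerically to find $(L_0,z_0)\approx(0.0445,0.0238)$ --- the same point as in Proposition~\ref{asy_rooted2}, as you correctly anticipate. The approach and all numerical values agree with the paper, so the proposal is correct.
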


\begin{proof}
To prove the result for $X_n$ (resp. $Y_n$), we specialize Eq.~\eqref{eq:multi_rooted2} for $y=1$ (resp. $x=1$) and rewrite it as 
$L(z,x,1) = F(L(z,x,1),z,x)$ for some explicit function $F$ (resp. $L(z,1,y) = F(L(z,1,y),z,y)$, for an explicit different $F$).
It is readily checked that $F$ satisfies all hypotheses of Theorem~\ref{Drmota}, as well as the solutions 
 $(L_0,z_0)$ of the system 
\begin{align*}
L=&F(L,z,1) \\
1=&F_L(L,z,1)
\end{align*} 
whose approximate values are $L_0 \approx 0.04447 $ and $z_0 \approx 0.02384$.
The result then follows from Theorem~\ref{Drmota}, 
and the numerical estimates of $\mu_X$ and $\sigma_X^2$ (resp. $\mu_Y$ and $\sigma_Y^2$)
are obtained plugging the numerical estimates for $L_0$ and $z_0$ into the explicit formulas given  by Theorem~\ref{Drmota}. 
\end{proof}

\section*{Acknowledgments}
This work was supported by a ``junior guest'' grant by the LABRI and bilateral Austrian-Taiwanese project FWF-MOST, grants
I~2309-N35 (FWF) and MOST-104-2923- M-009-006-MY3 (MOST). We thank Carine Pivoteau for her insights about random generation of combinatorial structures as well as two anonymous reviewers for their useful comments.

\bibliographystyle{alpha}      
\bibliography{bibliography}   

\newpage
\section{Appendix}

\subsection{Case analysis for unrooted level-2 generators}\label{appendix-unrooted-l2}

In the pictures below, we use thick lines to represent paths
containing at least 2 internal nodes incident with a cut-edge
which is incident with another pointed unrooted level-2 network.
We use \# to represent the fictitious root in the pointed network, 
$v$ to denote its neighbour, 
and $\mathcal{U}$ to represent any pointed network.

\subsubsection{Case 1: One edge with an attached network}

One edge of the generator carries a sequence of at least two incident cut-edges. 
Because multiple edges are not allowed, it cannot be one of the two edges incident to $v$.
So, it can be only one of the two edges not incident to $v$ (which are not distinguished). The sequence is unoriented, because of symmetry, explaining the factor $\tfrac12$ below. 

\[\frac{{U}^2}{2(1- {U})} \] 

\medskip

	\begin{center}
\begin{tabular}{cc}
\begin{tikzpicture}[baseline=(top.base)]
\begin{scope}[scale=0.3]
\draw (2,4.5) node [] (pointed) {\#};
\draw (2,2.5) node [circle, inner sep=1,fill,draw] (top) {};
\draw (0,1) node [circle, inner sep=1,fill,draw] (left) {};
\draw (4,1) node [circle, inner sep=1,fill,draw] (right) {};
\draw (pointed) -- (top);
\draw (top) .. controls(0.5,2) .. (left);
\draw (top) .. controls(3.5,2) .. (right);
\draw (right) .. controls(3,-1) .. (left);
\draw [line width=2pt](left) .. controls(1,-1) .. (right);
\end{scope}
\end{tikzpicture}
\end{tabular}
	\end{center}

\medskip

\subsubsection{Case 2: Two edges with attached networks}

\hspace{1em}~\\

Case 2A - Two edges of the generator carry exactly one incident cut-edge. 
Since multiple edges are not allowed, it can either be one edge incident to $v$ and one not, or both edges not incident to $v$. 
In the latter  case, the two edges should not be distinguished, hence the factor $\tfrac{1}{2}$.

 \[ {U}^2+\dfrac{{U}^2}{2}
= \frac{3}{2}{U}^2 \]

\medskip
\begin{center}
\begin{tabular}{ccc}
\begin{tikzpicture}[baseline=(top.base)]
\begin{scope}[scale=0.3]
\draw (2,4.5) node [] (pointed) {\#};
\draw (2,2.5) node [circle, inner sep=1,fill,draw] (top) {};
\draw (0,1) node [circle, inner sep=1,fill,draw] (left) {};
\draw (4,1) node [circle, inner sep=1,fill,draw] (right) {};
\draw (1,-0.5) node [circle, inner sep=1,fill,draw] (leftb) {};
\draw (1,-1.5) node [circle, inner sep=1,fill,draw] (leafb) {};
\draw (1,-2.5) node {$\mathcal{U}$};
\draw (0.6,2) node [circle, inner sep=1,fill,draw] (topleft) {};
\draw (-1,1) node [circle, inner sep=1,fill,draw] (leaftopleft) {};
\draw (-1,0) node {$\mathcal{U}$};
\draw (pointed) -- (top);
\draw (top) .. controls(0.5,2) .. (left);
\draw (top) .. controls(3.5,2) .. (right);
\draw (right) .. controls(3,-1) .. (left);
\draw (left) .. controls(1,-1) .. (right);
\draw (topleft) -- (leaftopleft);
\draw (leftb) -- (leafb);
\end{scope}
\end{tikzpicture}
&
\begin{tikzpicture}[baseline=(top.base)]
\begin{scope}[scale=0.3]
\draw (2,4.5) node [] (pointed) {\#};
\draw (2,2.5) node [circle, inner sep=1,fill,draw] (top) {};
\draw (0,1) node [circle, inner sep=1,fill,draw] (left) {};
\draw (4,1) node [circle, inner sep=1,fill,draw] (right) {};
\draw (1,-0.5) node [circle, inner sep=1,fill,draw] (leftb) {};
\draw (1,-1.5) node [circle, inner sep=1,fill,draw] (leafleft) {};
\draw (1,-2.5) node {$\mathcal{U}$};
\draw (3,-0.5) node [circle, inner sep=1,fill,draw] (rightb) {};
\draw (3,-1.5) node [circle, inner sep=1,fill,draw] (leafright) {};
\draw (3,-2.5) node {$\mathcal{U}$};
\draw (pointed) -- (top);
\draw (top) .. controls(0.5,2) .. (left);
\draw (top) .. controls(3.5,2) .. (right);
\draw (right) .. controls(3,-1) .. (left);
\draw (left) .. controls(1,-1) .. (right);
\draw (rightb) -- (leafright);
\draw (leftb) -- (leafleft);
\end{scope}
\end{tikzpicture}
\end{tabular}
	\end{center}

\medskip

Case 2B - One edge of the generator carries a single incident cut-edge and another edge carries a sequence of at least two incident cut-edges. 
Again, these cannot be the two edges incident to $v$. 
The only case where symmetries need to be taken care of is when the two edges are those not incident to $v$: in this case, the sequence is not oriented, hence the factor $\tfrac{1}{2}$. 
In all other cases, the orientation of the sequence is determined by the presence of the fictitious root or the outgoing arc from the other edge with and attached network. 
 \[\dfrac{{U}^3}{1-{U}}+\dfrac{{U}^3}{1-{U}}+\dfrac{{U}^3}{2(1-{U})} 
 = \frac{5{U}^3}{2(1- {U})}\]

\medskip

\begin{center}
\begin{tabular}{cccc}
\begin{tikzpicture}[baseline=(top.base)]
\begin{scope}[scale=0.3]
\draw (2,4.5) node [] (pointed) {\#};
\draw (2,2.5) node [circle, inner sep=1,fill,draw] (top) {};
\draw (0,1) node [circle, inner sep=1,fill,draw] (left) {};
\draw (4,1) node [circle, inner sep=1,fill,draw] (right) {};
\draw (0.6,2) node [circle, inner sep=1,fill,draw] (topleft) {};
\draw (-1,1) node [circle, inner sep=1,fill,draw] (leaftopleft) {};
\draw (-1,0) node {$\mathcal{U}$};
\draw (pointed) -- (top);
\draw (top) .. controls(0.5,2) .. (left);
\draw (top) .. controls(3.5,2) .. (right);
\draw (right) .. controls(3,-1) .. (left);
\draw [line width=2pt] (left) .. controls(1,-1) .. (right);
\draw (topleft) -- (leaftopleft);
\end{scope}
\end{tikzpicture}
&
\begin{tikzpicture}[baseline=(top.base)]
\begin{scope}[scale=0.3]
\draw (2,4.5) node [] (pointed) {\#};
\draw (2,2.5) node [circle, inner sep=1,fill,draw] (top) {};
\draw (0,1) node [circle, inner sep=1,fill,draw] (left) {};
\draw (4,1) node [circle, inner sep=1,fill,draw] (right) {};
\draw (1,-0.5) node [circle, inner sep=1,fill,draw] (leftb) {};
\draw (1,-1.5) node [circle, inner sep=1,fill,draw] (leafleft) {};
\draw (1,-2.5) node {$\mathcal{U}$};
\draw (pointed) -- (top);
\draw [line width=2pt] (top) .. controls(0.5,2) .. (left);
\draw (top) .. controls(3.5,2) .. (right);
\draw (right) .. controls(3,-1) .. (left);
\draw (left) .. controls(1,-1) .. (right);
\draw (leftb) -- (leafleft);
\end{scope}
\end{tikzpicture}
&
\begin{tikzpicture}[baseline=(top.base)]
\begin{scope}[scale=0.3]
\draw (2,4.5) node [] (pointed) {\#};
\draw (2,2.5) node [circle, inner sep=1,fill,draw] (top) {};
\draw (0,1) node [circle, inner sep=1,fill,draw] (left) {};
\draw (4,1) node [circle, inner sep=1,fill,draw] (right) {};
\draw (1,-0.5) node [circle, inner sep=1,fill,draw] (leftb) {};
\draw (1,-1.5) node [circle, inner sep=1,fill,draw] (leafleft) {};
\draw (1,-2.5) node {$\mathcal{U}$};
\draw (pointed) -- (top);
\draw (top) .. controls(0.5,2) .. (left);
\draw (top) .. controls(3.5,2) .. (right);
\draw [line width=2pt] (right) .. controls(3,-1) .. (left);
\draw (left) .. controls(1,-1) .. (right);
\draw (leftb) -- (leafleft);
\end{scope}
\end{tikzpicture}
\end{tabular}
	\end{center}

\medskip

Case 2C - Two edges of the generator (but not the two incident to $v$, as before) carry a sequence of at least two incident cut-edges. 
If one arc is incident to $v$ and the other not, then both sequences are oriented and there is no symmetry factor.
If the two arcs are those not incident to $v$, then the two sequences they carry can be seen as 
an unordered pair of oriented sequences, seen up to symmetry w.r.t. the vertical axis. This yields a factor $\tfrac{1}{2}$ since the pair is unordered, and another factor $\tfrac{1}{2}$ to account for the symmetry w.r.t. the vertical axis.

\[\dfrac{{U}^4}{(1-{U})^2}+\dfrac{{U}^4}{4(1-{U})^2}
 = \frac{5{U}^4}{4(1- {U})^2}\]

\medskip

	\begin{center}
\begin{tabular}{cc}
\begin{tikzpicture}[baseline=(top.base)]
\begin{scope}[scale=0.3]
\draw (2,4.5) node [] (pointed) {\#};
\draw (2,2.5) node [circle, inner sep=1,fill,draw] (top) {};
\draw (0,1) node [circle, inner sep=1,fill,draw] (left) {};
\draw (4,1) node [circle, inner sep=1,fill,draw] (right) {};
\draw (pointed) -- (top);
\draw (top) .. controls(0.5,2) .. (left);
\draw[line width=2pt] (top) .. controls(3.5,2) .. (right);
\draw (right) .. controls(3,-1) .. (left);
\draw [line width=2pt](left) .. controls(1,-1) .. (right);
\end{scope}
\end{tikzpicture}
\;\;\;
\begin{tikzpicture}[baseline=(top.base)]
\begin{scope}[scale=0.3]
\draw (2,4.5) node [] (pointed) {\#};
\draw (2,2.5) node [circle, inner sep=1,fill,draw] (top) {};
\draw (0,1) node [circle, inner sep=1,fill,draw] (left) {};
\draw (4,1) node [circle, inner sep=1,fill,draw] (right) {};
\draw (pointed) -- (top);
\draw (top) .. controls(0.5,2) .. (left);
\draw (top) .. controls(3.5,2) .. (right);
\draw[line width=2pt] (right) .. controls(3,-1) .. (left);
\draw [line width=2pt](left) .. controls(1,-1) .. (right);
\end{scope}
\end{tikzpicture}
\end{tabular}
	\end{center}

\medskip

\subsubsection{Case 3: Three edges with attached networks}

\hspace{1em}~\\

Case 3A - Three edges of the generator carry exactly one incident cut-edge. 
The unused edge can either be incident with $v$ or not. 
In both cases, we have a factor $\tfrac{1}{2}$ because of symmetry. 

\[\dfrac{{U}^3}{2}+\dfrac{{U}^3}{2} 
= {U}^3 \]

\medskip

	\begin{center}
\begin{tabular}{cc}
\begin{tikzpicture}[baseline=(top.base)]
\begin{scope}[scale=0.3]
\draw (2,4.5) node [] (pointed) {\#};
\draw (2,2.5) node [circle, inner sep=1,fill,draw] (top) {};
\draw (0,1) node [circle, inner sep=1,fill,draw] (left) {};
\draw (4,1) node [circle, inner sep=1,fill,draw] (right) {};
\draw (1,-0.5) node [circle, inner sep=1,fill,draw] (leftb) {};
\draw (1,-1.5) node [circle, inner sep=1,fill,draw] (leafb) {};
\draw (1,-2.5) node {$\mathcal{U}$};
\draw (0.6,2) node [circle, inner sep=1,fill,draw] (topleft) {};
\draw (-1,1) node [circle, inner sep=1,fill,draw] (leaftopleft) {};
\draw (-1,0) node {$\mathcal{U}$};
\draw (3.4,2) node [circle, inner sep=1,fill,draw] (topright) {};
\draw (5,1) node [circle, inner sep=1,fill,draw] (leaftopright) {};
\draw (5,0) node {$\mathcal{U}$};
\draw (pointed) -- (top);
\draw (top) .. controls(0.5,2) .. (left);
\draw (top) .. controls(3.5,2) .. (right);
\draw (right) .. controls(3,-1) .. (left);
\draw (left) .. controls(1,-1) .. (right);
\draw (topleft) -- (leaftopleft);
\draw (topright) -- (leaftopright);
\draw (leftb) -- (leafb);
\end{scope}
\end{tikzpicture}
&
\begin{tikzpicture}[baseline=(top.base)]
\begin{scope}[scale=0.3]
\draw (2,4.5) node [] (pointed) {\#};
\draw (2,2.5) node [circle, inner sep=1,fill,draw] (top) {};
\draw (0,1) node [circle, inner sep=1,fill,draw] (left) {};
\draw (4,1) node [circle, inner sep=1,fill,draw] (right) {};
\draw (1,-0.5) node [circle, inner sep=1,fill,draw] (leftb) {};
\draw (1,-1.5) node [circle, inner sep=1,fill,draw] (leafleft) {};
\draw (1,-2.5) node {$\mathcal{U}$};
\draw (3,-0.5) node [circle, inner sep=1,fill,draw] (rightb) {};
\draw (3,-1.5) node [circle, inner sep=1,fill,draw] (leafright) {};
\draw (3,-2.5) node {$\mathcal{U}$};
\draw (0.6,2) node [circle, inner sep=1,fill,draw] (topleft) {};
\draw (-1,1) node [circle, inner sep=1,fill,draw] (leaftopleft) {};
\draw (-1,0) node {$\mathcal{U}$};
\draw (pointed) -- (top);
\draw (top) .. controls(0.5,2) .. (left);
\draw (top) .. controls(3.5,2) .. (right);
\draw (right) .. controls(3,-1) .. (left);
\draw (left) .. controls(1,-1) .. (right);
\draw (topleft) -- (leaftopleft);
\draw (rightb) -- (leafright);
\draw (leftb) -- (leafleft);
\end{scope}
\end{tikzpicture}
\end{tabular}
	\end{center}

\medskip

Case 3B - Two edges of the generator carry a single incident cut-edge and one carries a sequence of at least two incident cut-edges. The only cases where a symmetry comes into play here are when 
the edges carrying a single incident cut-edge are 
either the two edges incident to $v$ or the two edges not incident to $v$. 
This yield the factor $\tfrac{1}{2}$ in these two cases. 
Moreover, all sequences are oriented, because of the presence of the fictitious root 
or the single incident cut-edges.

 \[\dfrac{{U}^4}{1-{U}}+\dfrac{{U}^4}{1-{U}}+\dfrac{{U}^4}{2(1-{U})}+\dfrac{{U}^4}{2(1-{U})} 
 = \frac{3{U}^4}{1-{U}} \]

\medskip

\begin{center}
\begin{tabular}{cccc}
\begin{tikzpicture}[baseline=(top.base)]
\begin{scope}[scale=0.3]
\draw (2,4.5) node [] (pointed) {\#};
\draw (2,2.5) node [circle, inner sep=1,fill,draw] (top) {};
\draw (0,1) node [circle, inner sep=1,fill,draw] (left) {};
\draw (4,1) node [circle, inner sep=1,fill,draw] (right) {};
\draw (1,-0.5) node [circle, inner sep=1,fill,draw] (leftb) {};
\draw (1,-1.5) node [circle, inner sep=1,fill,draw] (leafb) {};
\draw (1,-2.5) node {$\mathcal{U}$};
\draw (0.6,2) node [circle, inner sep=1,fill,draw] (topleft) {};
\draw (-1,1) node [circle, inner sep=1,fill,draw] (leaftopleft) {};
\draw (-1,0) node {$\mathcal{U}$};
\draw (pointed) -- (top);
\draw (top) .. controls(0.5,2) .. (left);
\draw (top) .. controls(3.5,2) .. (right);
\draw [line width=2pt] (right) .. controls(3,-1) .. (left);
\draw (left) .. controls(1,-1) .. (right);
\draw (topleft) -- (leaftopleft);
\draw (leftb) -- (leafb);
\end{scope}
\end{tikzpicture}
&
\begin{tikzpicture}[baseline=(top.base)]
\begin{scope}[scale=0.3]
\draw (2,4.5) node [] (pointed) {\#};
\draw (2,2.5) node [circle, inner sep=1,fill,draw] (top) {};
\draw (0,1) node [circle, inner sep=1,fill,draw] (left) {};
\draw (4,1) node [circle, inner sep=1,fill,draw] (right) {};
\draw (1,-0.5) node [circle, inner sep=1,fill,draw] (leftb) {};
\draw (1,-1.5) node [circle, inner sep=1,fill,draw] (leafb) {};
\draw (1,-2.5) node {$\mathcal{U}$};
\draw (0.6,2) node [circle, inner sep=1,fill,draw] (topleft) {};
\draw (-1,1) node [circle, inner sep=1,fill,draw] (leaftopleft) {};
\draw (-1,0) node {$\mathcal{U}$};
\draw (pointed) -- (top);
\draw (top) .. controls(0.5,2) .. (left);
\draw [line width=2pt] (top) .. controls(3.5,2) .. (right);
\draw (right) .. controls(3,-1) .. (left);
\draw (left) .. controls(1,-1) .. (right);
\draw (topleft) -- (leaftopleft);
\draw (leftb) -- (leafb);
\end{scope}
\end{tikzpicture}
&
\begin{tikzpicture}[baseline=(top.base)]
\begin{scope}[scale=0.3]
\draw (2,4.5) node [] (pointed) {\#};
\draw (2,2.5) node [circle, inner sep=1,fill,draw] (top) {};
\draw (0,1) node [circle, inner sep=1,fill,draw] (left) {};
\draw (4,1) node [circle, inner sep=1,fill,draw] (right) {};
\draw (1,-0.5) node [circle, inner sep=1,fill,draw] (leftb) {};
\draw (1,-1.5) node [circle, inner sep=1,fill,draw] (leafleft) {};
\draw (1,-2.5) node {$\mathcal{U}$};
\draw (3,-0.5) node [circle, inner sep=1,fill,draw] (rightb) {};
\draw (3,-1.5) node [circle, inner sep=1,fill,draw] (leafright) {};
\draw (3,-2.5) node {$\mathcal{U}$};
\draw (pointed) -- (top);
\draw  [line width=2pt] (top) .. controls(0.5,2) .. (left);
\draw (top) .. controls(3.5,2) .. (right);
\draw (right) .. controls(3,-1) .. (left);
\draw (left) .. controls(1,-1) .. (right);
\draw (rightb) -- (leafright);
\draw (leftb) -- (leafleft);
\end{scope}
\end{tikzpicture}
&
\begin{tikzpicture}[baseline=(top.base)]
\begin{scope}[scale=0.3]
\draw (2,4.5) node [] (pointed) {\#};
\draw (2,2.5) node [circle, inner sep=1,fill,draw] (top) {};
\draw (0,1) node [circle, inner sep=1,fill,draw] (left) {};
\draw (4,1) node [circle, inner sep=1,fill,draw] (right) {};
\draw (0.6,2) node [circle, inner sep=1,fill,draw] (topleft) {};
\draw (-1,1) node [circle, inner sep=1,fill,draw] (leaftopleft) {};
\draw (-1,0) node {$\mathcal{U}$};
\draw (3.4,2) node [circle, inner sep=1,fill,draw] (topright) {};
\draw (5,1) node [circle, inner sep=1,fill,draw] (leaftopright) {};
\draw (5,0) node {$\mathcal{U}$};
\draw (pointed) -- (top);
\draw (top) .. controls(0.5,2) .. (left);
\draw (top) .. controls(3.5,2) .. (right);
\draw (right) .. controls(3,-1) .. (left);
\draw  [line width=2pt] (left) .. controls(1,-1) .. (right);
\draw (topleft) -- (leaftopleft);
\draw (topright) -- (leaftopright);
\end{scope}
\end{tikzpicture}
\end{tabular}
	\end{center}

\medskip

Case 3C -  One edge of the generator carries a single incident cut-edge and two edges carry a sequence of at least two incident cut-edges. Similarly to the previous case, 
we obtain a factor $\tfrac{1}{2}$ for symmetry reasons when 
the two edges carrying sequences are 
either the two edges incident to $v$ or the two edges not incident to $v$. 
Moreover, all sequences are oriented, because of the presence of the fictitious root 
or the single incident cut-edge. 
\[\dfrac{{U}^5}{(1-{U})^2}+\dfrac{{U}^5}{(1-{U})^2}+\dfrac{{U}^5}{2(1-{U})^2}+\dfrac{{U}^5}{2(1-{U})^2} 
 = \frac{3{U}^5}{(1- {U})^2} \]


\medskip

	\begin{center}
\begin{tabular}{ccc}
\begin{tikzpicture}[baseline=(top.base)]
\begin{scope}[scale=0.3]
\draw (2,4.5) node [] (pointed) {\#};
\draw (2,2.5) node [circle, inner sep=1,fill,draw] (top) {};
\draw (0,1) node [circle, inner sep=1,fill,draw] (left) {};
\draw (4,1) node [circle, inner sep=1,fill,draw] (right) {};
\draw (0.6,2) node [circle, inner sep=1,fill,draw] (topleft) {};
\draw (-1,1) node [circle, inner sep=1,fill,draw] (leaftopleft) {};
\draw (-1,0) node {$\mathcal{U}$};
\draw (pointed) -- (top);
\draw (top) .. controls(0.5,2) .. (left);
\draw[line width=2pt] (top) .. controls(3.5,2) .. (right);
\draw[line width=2pt] (right) .. controls(3,-1) .. (left);
\draw (left) .. controls(1,-1) .. (right);
\draw (topleft) -- (leaftopleft);
\end{scope}
\end{tikzpicture}
&
\begin{tikzpicture}[baseline=(top.base)]
\begin{scope}[scale=0.3]
\draw (2,4.5) node [] (pointed) {\#};
\draw (2,2.5) node [circle, inner sep=1,fill,draw] (top) {};
\draw (0,1) node [circle, inner sep=1,fill,draw] (left) {};
\draw (4,1) node [circle, inner sep=1,fill,draw] (right) {};
\draw (3,-0.5) node [circle, inner sep=1,fill,draw] (rightb) {};
\draw (3,-1.5) node [circle, inner sep=1,fill,draw] (leafright) {};
\draw (3,-2.5) node {$\mathcal{U}$};
\draw (pointed) -- (top);
\draw[line width=2pt] (top) .. controls(0.5,2) .. (left);
\draw (top) .. controls(3.5,2) .. (right);
\draw (right) .. controls(3,-1) .. (left);
\draw[line width=2pt] (left) .. controls(1,-1) .. (right);
\draw (rightb) -- (leafright);
\end{scope}
\end{tikzpicture}
&
\begin{tikzpicture}[baseline=(top.base)]
\begin{scope}[scale=0.3]
\draw (2,4.5) node [] (pointed) {\#};
\draw (2,2.5) node [circle, inner sep=1,fill,draw] (top) {};
\draw (0,1) node [circle, inner sep=1,fill,draw] (left) {};
\draw (4,1) node [circle, inner sep=1,fill,draw] (right) {};
\draw (3,-0.5) node [circle, inner sep=1,fill,draw] (rightb) {};
\draw (3,-1.5) node [circle, inner sep=1,fill,draw] (leafright) {};
\draw (3,-2.5) node {$\mathcal{U}$};
\draw (pointed) -- (top);
\draw[line width=2pt] (top) .. controls(0.5,2) .. (left);
\draw[line width=2pt] (top) .. controls(3.5,2) .. (right);
\draw (right) .. controls(3,-1) .. (left);
\draw (left) .. controls(1,-1) .. (right);
\draw (rightb) -- (leafright);
\end{scope}
\end{tikzpicture}

\begin{tikzpicture}[baseline=(top.base)]
\begin{scope}[scale=0.3]
\draw (2,4.5) node [] (pointed) {\#};
\draw (2,2.5) node [circle, inner sep=1,fill,draw] (top) {};
\draw (0,1) node [circle, inner sep=1,fill,draw] (left) {};
\draw (4,1) node [circle, inner sep=1,fill,draw] (right) {};
\draw (0.6,2) node [circle, inner sep=1,fill,draw] (topleft) {};
\draw (-1,1) node [circle, inner sep=1,fill,draw] (leaftopleft) {};
\draw (-1,0) node {$\mathcal{U}$};
\draw (pointed) -- (top);
\draw (top) .. controls(0.5,2) .. (left);
\draw (top) .. controls(3.5,2) .. (right);
\draw[line width=2pt] (right) .. controls(3,-1) .. (left);
\draw[line width=2pt] (left) .. controls(1,-1) .. (right);
\draw (topleft) -- (leaftopleft);
\end{scope}
\end{tikzpicture}
\end{tabular}
	\end{center}

\medskip

Case 3D - Three edges of the generator carry a sequence of at least two incident cut-edges.
In both cases, we have a factor $\tfrac12$ for symmetry reason, but all sequences are oriented 
by the presence of the fictitious root, or of the sequence on the edge(s) incident to $v$. 

 \[\dfrac{{U}^6}{2(1-{U})^3}+\dfrac{{U}^6}{2(1-{U})^3} 
 = \frac{{U}^6}{(1- {U})^3} \]
 

\medskip

\begin{center}
\begin{tabular}{cc}
\begin{tikzpicture}[baseline=(top.base)]
\begin{scope}[scale=0.3]
\draw (2,4.5) node [] (pointed) {\#};
\draw (2,2.5) node [circle, inner sep=1,fill,draw] (top) {};
\draw (0,1) node [circle, inner sep=1,fill,draw] (left) {};
\draw (4,1) node [circle, inner sep=1,fill,draw] (right) {};
\draw (pointed) -- (top);
\draw[line width=2pt] (top) .. controls(0.5,2) .. (left);
\draw[line width=2pt] (top) .. controls(3.5,2) .. (right);
\draw[line width=2pt] (right) .. controls(3,-1) .. (left);
\draw (left) .. controls(1,-1) .. (right);
\end{scope}
\end{tikzpicture}
&
\begin{tikzpicture}[baseline=(top.base)]
\begin{scope}[scale=0.3]
\draw (2,4.5) node [] (pointed) {\#};
\draw (2,2.5) node [circle, inner sep=1,fill,draw] (top) {};
\draw (0,1) node [circle, inner sep=1,fill,draw] (left) {};
\draw (4,1) node [circle, inner sep=1,fill,draw] (right) {};
\draw (pointed) -- (top);
\draw (top) .. controls(0.5,2) .. (left);
\draw[line width=2pt] (top) .. controls(3.5,2) .. (right);
\draw[line width=2pt] (right) .. controls(3,-1) .. (left);
\draw [line width=2pt](left) .. controls(1,-1) .. (right);
\end{scope}
\end{tikzpicture}
\end{tabular}
	\end{center}

\medskip

\subsubsection{Case 4: Four edges with attached networks}

\hspace{1em}~\\

Case 4A - The four edges of the generator each carry exactly one incident cut-edge. 
In this case, the two edges incident to $v$ can be exchanged without modifying the network, 
and the same holds for the two edges not incident to $v$. 
This yields a factor $\tfrac{1}{2} \cdot \tfrac{1}{2} = \tfrac{1}{4}$ due to symmetries. 

\[\dfrac{{U}^4}{4} \]

\medskip

\begin{center}
\begin{tikzpicture}[baseline=(top.base)]
\begin{scope}[scale=0.3]
\draw (2,4.5) node [] (pointed) {\#};
\draw (2,2.5) node [circle, inner sep=1,fill,draw] (top) {};
\draw (0,1) node [circle, inner sep=1,fill,draw] (left) {};
\draw (4,1) node [circle, inner sep=1,fill,draw] (right) {};
\draw (1,-0.5) node [circle, inner sep=1,fill,draw] (leftb) {};
\draw (1,-1.5) node [circle, inner sep=1,fill,draw] (leafb) {};
\draw (1,-2.5) node {$\mathcal{U}$};
\draw (0.6,2) node [circle, inner sep=1,fill,draw] (topleft) {};
\draw (-1,1) node [circle, inner sep=1,fill,draw] (leaftopleft) {};
\draw (-1,0) node {$\mathcal{U}$};
\draw (3.4,2) node [circle, inner sep=1,fill,draw] (topright) {};
\draw (5,1) node [circle, inner sep=1,fill,draw] (leaftopright) {};
\draw (5,0) node {$\mathcal{U}$};
\draw (3,-0.5) node [circle, inner sep=1,fill,draw] (rightb) {};
\draw (3,-1.5) node [circle, inner sep=1,fill,draw] (leafright) {};
\draw (3,-2.5) node {$\mathcal{U}$};
\draw (pointed) -- (top);
\draw (top) .. controls(0.5,2) .. (left);
\draw (top) .. controls(3.5,2) .. (right);
\draw (right) .. controls(3,-1) .. (left);
\draw (left) .. controls(1,-1) .. (right);
\draw (topleft) -- (leaftopleft);
\draw (topright) -- (leaftopright);
\draw (leftb) -- (leafb);
\draw (rightb) -- (leafright);
\end{scope}
\end{tikzpicture}
	\end{center}

\medskip

Case 4B - Three edges of the generator carry a single incident cut-edge and the fourth one carries a sequence of at least two incident cut-edges. If this fourth edge is one incident to $v$, then the sequence it carries is oriented by the presence of the fictitious root, but the two arcs pending on the edges not incident to $v$ are symmetric, hence a factor $\tfrac{1}{2}$. 
If on the contrary the edge carrying the sequence is not incident to $v$, then the sequence is also oriented, this time because of the arcs attached to the edges incident to $v$. Moreover, the picture has a symmetry w.r.t. the vertical axis, hence a factor $\tfrac{1}{2}$. 

 \[\dfrac{{U}^5}{2(1-{U})}+\dfrac{{U}^5}{2(1-{U})} 
= \frac{{U}^5}{1- {U}} \]

\medskip

\begin{center}
\begin{tabular}{cc}
\begin{tikzpicture}[baseline=(top.base)]
\begin{scope}[scale=0.3]
\draw (2,4.5) node [] (pointed) {\#};
\draw (2,2.5) node [circle, inner sep=1,fill,draw] (top) {};
\draw (0,1) node [circle, inner sep=1,fill,draw] (left) {};
\draw (4,1) node [circle, inner sep=1,fill,draw] (right) {};
\draw (1,-0.5) node [circle, inner sep=1,fill,draw] (leftb) {};
\draw (1,-1.5) node [circle, inner sep=1,fill,draw] (leafb) {};
\draw (1,-2.5) node {$\mathcal{U}$};
\draw (0.6,2) node [circle, inner sep=1,fill,draw] (topleft) {};
\draw (-1,1) node [circle, inner sep=1,fill,draw] (leaftopleft) {};
\draw (-1,0) node {$\mathcal{U}$};
\draw (3.4,2) node [circle, inner sep=1,fill,draw] (topright) {};
\draw (5,1) node [circle, inner sep=1,fill,draw] (leaftopright) {};
\draw (5,0) node {$\mathcal{U}$};
\draw (pointed) -- (top);
\draw (top) .. controls(0.5,2) .. (left);
\draw (top) .. controls(3.5,2) .. (right);
\draw (right)[line width=2pt] .. controls(3,-1) .. (left);
\draw (left) .. controls(1,-1) .. (right);
\draw (topleft) -- (leaftopleft);
\draw (topright) -- (leaftopright);
\draw (leftb) -- (leafb);
\end{scope}
\end{tikzpicture}
&
\begin{tikzpicture}[baseline=(top.base)]
\begin{scope}[scale=0.3]
\draw (2,4.5) node [] (pointed) {\#};
\draw (2,2.5) node [circle, inner sep=1,fill,draw] (top) {};
\draw (0,1) node [circle, inner sep=1,fill,draw] (left) {};
\draw (4,1) node [circle, inner sep=1,fill,draw] (right) {};
\draw (1,-0.5) node [circle, inner sep=1,fill,draw] (leftb) {};
\draw (1,-1.5) node [circle, inner sep=1,fill,draw] (leafleft) {};
\draw (1,-2.5) node {$\mathcal{U}$};
\draw (3,-0.5) node [circle, inner sep=1,fill,draw] (rightb) {};
\draw (3,-1.5) node [circle, inner sep=1,fill,draw] (leafright) {};
\draw (3,-2.5) node {$\mathcal{U}$};
\draw (0.6,2) node [circle, inner sep=1,fill,draw] (topleft) {};
\draw (-1,1) node [circle, inner sep=1,fill,draw] (leaftopleft) {};
\draw (-1,0) node {$\mathcal{U}$};
\draw (pointed) -- (top);
\draw (top) .. controls(0.5,2) .. (left);
\draw (top)[line width=2pt] .. controls(3.5,2) .. (right);
\draw (right) .. controls(3,-1) .. (left);
\draw (left) .. controls(1,-1) .. (right);
\draw (topleft) -- (leaftopleft);
\draw (rightb) -- (leafright);
\draw (leftb) -- (leafleft);
\end{scope}
\end{tikzpicture}
\end{tabular}
	\end{center}

\medskip

Case 4C - Two edges carry a single incident cut-edge and the two others carry a sequence of at least two incident cut-edges. In all cases, the sequences are oriented, by the presence of either the fictitious root or of the single arcs attached to edges. If the edges carrying sequences are one incident to $v$ and the other not incident to $v$, all edges are in addition distinguished from each other. In the other two cases, both edges incident to $v$ form an unordered pair, as well as the two edges not incident to $v$. In each case, we therefore have a factor $\tfrac{1}{4}$.
\[\dfrac{{U}^6}{(1-{U})^2}+\dfrac{{U}^6}{4(1-{U})^2}+\dfrac{{U}^6}{4(1-{U})^2} 
 = \frac{3{U}^6}{2(1- {U})^2} \]


\medskip

	\begin{center}
\begin{tabular}{ccc}
\begin{tikzpicture}[baseline=(top.base)]
\begin{scope}[scale=0.3]
\draw (2,4.5) node [] (pointed) {\#};
\draw (2,2.5) node [circle, inner sep=1,fill,draw] (top) {};
\draw (0,1) node [circle, inner sep=1,fill,draw] (left) {};
\draw (4,1) node [circle, inner sep=1,fill,draw] (right) {};
\draw (1,-0.5) node [circle, inner sep=1,fill,draw] (leftb) {};
\draw (1,-1.5) node [circle, inner sep=1,fill,draw] (leafb) {};
\draw (1,-2.5) node {$\mathcal{U}$};
\draw (0.6,2) node [circle, inner sep=1,fill,draw] (topleft) {};
\draw (-1,1) node [circle, inner sep=1,fill,draw] (leaftopleft) {};
\draw (-1,0) node {$\mathcal{U}$};
\draw (pointed) -- (top);
\draw (top) .. controls(0.5,2) .. (left);
\draw (top)[line width=2pt] .. controls(3.5,2) .. (right);
\draw (right)[line width=2pt] .. controls(3,-1) .. (left);
\draw (left) .. controls(1,-1) .. (right);
\draw (topleft) -- (leaftopleft);
\draw (leftb) -- (leafb);
\end{scope}
\end{tikzpicture}
&
\begin{tikzpicture}[baseline=(top.base)]
\begin{scope}[scale=0.3]
\draw (2,4.5) node [] (pointed) {\#};
\draw (2,2.5) node [circle, inner sep=1,fill,draw] (top) {};
\draw (0,1) node [circle, inner sep=1,fill,draw] (left) {};
\draw (4,1) node [circle, inner sep=1,fill,draw] (right) {};
\draw (1,-0.5) node [circle, inner sep=1,fill,draw] (leftb) {};
\draw (1,-1.5) node [circle, inner sep=1,fill,draw] (leafleft) {};
\draw (1,-2.5) node {$\mathcal{U}$};
\draw (3,-0.5) node [circle, inner sep=1,fill,draw] (rightb) {};
\draw (3,-1.5) node [circle, inner sep=1,fill,draw] (leafright) {};
\draw (3,-2.5) node {$\mathcal{U}$};
\draw (pointed) -- (top);
\draw (top)[line width=2pt] .. controls(0.5,2) .. (left);
\draw (top)[line width=2pt] .. controls(3.5,2) .. (right);
\draw (right) .. controls(3,-1) .. (left);
\draw (left) .. controls(1,-1) .. (right);
\draw (rightb) -- (leafright);
\draw (leftb) -- (leafleft);
\end{scope}
\end{tikzpicture}
&
\begin{tikzpicture}[baseline=(top.base)]
\begin{scope}[scale=0.3]
\draw (2,4.5) node [] (pointed) {\#};
\draw (2,2.5) node [circle, inner sep=1,fill,draw] (top) {};
\draw (0,1) node [circle, inner sep=1,fill,draw] (left) {};
\draw (4,1) node [circle, inner sep=1,fill,draw] (right) {};
\draw (0.6,2) node [circle, inner sep=1,fill,draw] (topleft) {};
\draw (-1,1) node [circle, inner sep=1,fill,draw] (leaftopleft) {};
\draw (-1,0) node {$\mathcal{U}$};
\draw (3.4,2) node [circle, inner sep=1,fill,draw] (topright) {};
\draw (5,1) node [circle, inner sep=1,fill,draw] (leaftopright) {};
\draw (5,0) node {$\mathcal{U}$};
\draw (pointed) -- (top);
\draw (top) .. controls(0.5,2) .. (left);
\draw (top) .. controls(3.5,2) .. (right);
\draw (right)[line width=2pt] .. controls(3,-1) .. (left);
\draw (left)[line width=2pt] .. controls(1,-1) .. (right);
\draw (topleft) -- (leaftopleft);
\draw (topright) -- (leaftopright);
\end{scope}
\end{tikzpicture}
\end{tabular}
	\end{center}

\medskip

Case 4D - One edge of the generator carries a single incident cut-edge and three edges carry a sequence of at least two incident cut-edges. As in the previous case, all sequences are oriented. However, if the two edges incident to $v$ carry a sequence, the picture has a symmetry w.r.t. the vertical axis, hence a factor $\tfrac{1}{2}$. If on the contrary the two edges not incident to $v$ carry a sequence, these two edges are indistinguishable, hence a factor $\tfrac{1}{2}$ also in this case. 

 \[\dfrac{{U}^7}{2(1-{U})^3}+\dfrac{{U}^7}{2(1-{U})^3} 
 = \frac{{U}^7}{(1- {U})^3} \]


\medskip

\begin{center}
\begin{tabular}{ccc}
\begin{tikzpicture}[baseline=(top.base)]
\begin{scope}[scale=0.3]
\draw (2,4.5) node [] (pointed) {\#};
\draw (2,2.5) node [circle, inner sep=1,fill,draw] (top) {};
\draw (0,1) node [circle, inner sep=1,fill,draw] (left) {};
\draw (4,1) node [circle, inner sep=1,fill,draw] (right) {};
\draw (0.6,2) node [circle, inner sep=1,fill,draw] (topleft) {};
\draw (-1,1) node [circle, inner sep=1,fill,draw] (leaftopleft) {};
\draw (-1,0) node {$\mathcal{U}$};
\draw (pointed) -- (top);
\draw (top) .. controls(0.5,2) .. (left);
\draw[line width=2pt] (top) .. controls(3.5,2) .. (right);
\draw[line width=2pt] (right) .. controls(3,-1) .. (left);
\draw[line width=2pt] (left) .. controls(1,-1) .. (right);
\draw (topleft) -- (leaftopleft);
\end{scope}
\end{tikzpicture}
&
\begin{tikzpicture}[baseline=(top.base)]
\begin{scope}[scale=0.3]
\draw (2,4.5) node [] (pointed) {\#};
\draw (2,2.5) node [circle, inner sep=1,fill,draw] (top) {};
\draw (0,1) node [circle, inner sep=1,fill,draw] (left) {};
\draw (4,1) node [circle, inner sep=1,fill,draw] (right) {};
\draw (3,-0.5) node [circle, inner sep=1,fill,draw] (rightb) {};
\draw (3,-1.5) node [circle, inner sep=1,fill,draw] (leafright) {};
\draw (3,-2.5) node {$\mathcal{U}$};
\draw (pointed) -- (top);
\draw[line width=2pt] (top) .. controls(0.5,2) .. (left);
\draw[line width=2pt] (top) .. controls(3.5,2) .. (right);
\draw (right) .. controls(3,-1) .. (left);
\draw[line width=2pt] (left) .. controls(1,-1) .. (right);
\draw (rightb) -- (leafright);
\end{scope}
\end{tikzpicture}
\end{tabular}
	\end{center}
	
\medskip

Case 4E - All four edges of the generator carry a sequence of at least two incident cut-edges. Then all sequences are oriented, but the two edges not incident to $v$ are indistinguishable. The picture has in addition a symmetry w.r.t. the vertical axis. This yields a factor $\tfrac{1}{4}$. 

\[\dfrac{{U}^8}{4(1-{U})^4} \]
 

\medskip

\begin{center}
\begin{tabular}{cc}
\begin{tikzpicture}[baseline=(top.base)]
\begin{scope}[scale=0.3]
\draw (2,4.5) node [] (pointed) {\#};
\draw (2,2.5) node [circle, inner sep=1,fill,draw] (top) {};
\draw (0,1) node [circle, inner sep=1,fill,draw] (left) {};
\draw (4,1) node [circle, inner sep=1,fill,draw] (right) {};
\draw (pointed) -- (top);
\draw[line width=2pt] (top) .. controls(0.5,2) .. (left);
\draw[line width=2pt] (top) .. controls(3.5,2) .. (right);
\draw[line width=2pt] (right) .. controls(3,-1) .. (left);
\draw[line width=2pt] (left) .. controls(1,-1) .. (right);
\end{scope}
\end{tikzpicture}
\end{tabular}
	\end{center}


\subsection{Case analysis for the rooted level-2 generator 2b}\label{appendix-rooted-l2}

In the pictures below, we use thick lines to represent paths
containing at least one internal node incident with a cut arc
which is incident with the root of another rooted level-2 network.
All arcs are directed downwards. 
We use $\mathcal{L}$ to represented any rooted level-2 network. 

\subsubsection{Case 1: } Only one arc of the generator
carries a sequence of at least one outgoing arc.
This arc can only be $e$ or $e'$ (and these cases are indistinguishable), since otherwise the network would contain multiple arcs, and this is not allowed.

$$   {L} \frac{{L}}{1-{L}}  $$

\begin{figure}[!h]
	\begin{center}
	  \begin{tabular}{cccc}
		\begin{tikzpicture}[
		scale=0.7,
		level/.style={thick},
		virtual/.style={thick,densely dashed},
		trans/.style={thick,<->,shorten >=2pt,shorten <=2pt,>=stealth},
		classical/.style={thin,double,<->,shorten >=4pt,shorten <=4pt,>=stealth}
		]
	\draw (-3cm,-3.5cm) node[circle,inner sep=1,fill,draw] (1) {};
	\draw (-4cm,-4cm) node[circle,inner sep=1,fill,draw] (2) {};
	\draw (-4cm,-5cm) node[circle,inner sep=1,fill,draw] (3) {};
	\draw (-3cm,-5.5cm) node[circle,inner sep=1,fill,draw] (4) {};
	\draw (-3cm,-6.2cm) node[circle,inner sep=1,fill,draw] (5) {};
	\draw (-3cm,-6.7cm) node[inner sep=1.5pt,circle] (6) {${\mathcal L}$};
	\draw (1)--(2);\draw (1)--(4);\draw (3)--(4);\draw (4)--(5);
	\draw  [line width=2pt] (2) to[out=0,in=0] (3);
	\draw  [black,-] (2) to[out=180,in=180] (3);
	\draw (-4.6cm,-4.6cm) node[inner sep=1.5pt,circle] (7) {$e$};
	\draw (-3.3cm,-4.5cm) node[inner sep=1.5pt,circle] (8) {$e^\prime$};
		\end{tikzpicture} 
      \end{tabular}
	\end{center}	
\end{figure}

\subsubsection{Case 2: } Exactly two arcs of the generator
carry a sequence of at least one outgoing arc.
To avoid multiple arcs, either these two arcs are $e$ and $e'$ (and those two arcs
are symmetric, hence the factor $\frac12$),
or one of them is $e$ or $e'$ (which are not distinguished) and the other arc is chosen among the three arcs different from $e$ and $e'$.

$$ \frac{1}{2}  {L} \left(\dfrac{{L}}{1-{L}}\right)^2 + 3 {L} \left(\dfrac{{L}}{1-{L}}\right)^2 = 
\frac{7}{2}  {L} \left(\dfrac{{L}}{1-{L}}\right)^2$$

\begin{figure}[!h]
	\begin{center}
	  \begin{tabular}{ccccccc}
		\begin{tikzpicture}[
		scale=0.7,
		level/.style={thick},
		virtual/.style={thick,densely dashed},
		trans/.style={thick,<->,shorten >=2pt,shorten <=2pt,>=stealth},
		classical/.style={thin,double,<->,shorten >=4pt,shorten <=4pt,>=stealth}
		]
	\draw (-3cm,-3.5cm) node[circle,inner sep=1,fill,draw] (1) {};
	\draw (-4cm,-4cm) node[circle,inner sep=1,fill,draw] (2) {};
	\draw (-4cm,-5cm) node[circle,inner sep=1,fill,draw] (3) {};
	\draw (-3cm,-5.5cm) node[circle,inner sep=1,fill,draw] (4) {};
	\draw (-3cm,-6.2cm) node[circle,inner sep=1,fill,draw] (5) {};
	\draw (-3cm,-6.7cm) node[inner sep=1.5pt,circle] (6) {${\mathcal L}$};
	\draw (1)--(2);\draw (1)--(4);\draw (3)--(4);\draw (4)--(5);
	\draw [line width=2pt] (2) to[out=0,in=0] (3);
	\draw [line width=2pt] (2) to[out=180,in=180] (3);
	\draw (-4.6cm,-4.6cm) node[inner sep=1.5pt,circle] (7) {$e$};
	\draw (-3.3cm,-4.5cm) node[inner sep=1.5pt,circle] (8) {$e^\prime$};
	\draw (-4cm,-5.5cm) node[inner sep=1.5pt,circle] (9) {$\tiny{\overleftrightarrow{{sym}}}$};
		\end{tikzpicture} &

		\begin{tikzpicture}[
		scale=0.7,
		level/.style={thick},
		virtual/.style={thick,densely dashed},
		trans/.style={thick,<->,shorten >=2pt,shorten <=2pt,>=stealth},
		classical/.style={thin,double,<->,shorten >=4pt,shorten <=4pt,>=stealth}
		]
	\draw (-3cm,-3.5cm) node[circle,inner sep=1,fill,draw] (1) {};
	\draw (-4cm,-4cm) node[circle,inner sep=1,fill,draw] (2) {};
	\draw (-4cm,-5cm) node[circle,inner sep=1,fill,draw] (3) {};
	\draw (-3cm,-5.5cm) node[circle,inner sep=1,fill,draw] (4) {};
	\draw (-3cm,-6.2cm) node[circle,inner sep=1,fill,draw] (5) {};
	\draw (-3cm,-6.7cm) node[inner sep=1.5pt,circle] (6) {${\mathcal L}$};
	\draw [line width=2pt] (1)--(2);\draw (1)--(4);\draw (3)--(4);\draw (4)--(5);
	\draw  [black,-] (2) to[out=0,in=0] (3);
	\draw  [line width=2pt] (2) to[out=180,in=180] (3);
	\draw (-4.6cm,-4.6cm) node[inner sep=1.5pt,circle] (7) {$e$};
	\draw (-3.3cm,-4.5cm) node[inner sep=1.5pt,circle] (8) {$e^\prime$};
		\end{tikzpicture} &

		\begin{tikzpicture}[
		scale=0.7,
		level/.style={thick},
		virtual/.style={thick,densely dashed},
		trans/.style={thick,<->,shorten >=2pt,shorten <=2pt,>=stealth},
		classical/.style={thin,double,<->,shorten >=4pt,shorten <=4pt,>=stealth}
		]
	\draw (-3cm,-3.5cm) node[circle,inner sep=1,fill,draw] (1) {};
	\draw (-4cm,-4cm) node[circle,inner sep=1,fill,draw] (2) {};
	\draw (-4cm,-5cm) node[circle,inner sep=1,fill,draw] (3) {};
	\draw (-3cm,-5.5cm) node[circle,inner sep=1,fill,draw] (4) {};
	\draw (-3cm,-6.2cm) node[circle,inner sep=1,fill,draw] (5) {};
	\draw (-3cm,-6.7cm) node[inner sep=1.5pt,circle] (6) {${\mathcal L}$};
	\draw (1)--(2);\draw (1)--(4);\draw [line width=2pt] (3)--(4);\draw (4)--(5);
	\draw  [black,-] (2) to[out=0,in=0] (3);
	\draw [line width=2pt] (2) to[out=180,in=180] (3);
	\draw (-4.6cm,-4.6cm) node[inner sep=1.5pt,circle] (7) {$e$};
	\draw (-3.3cm,-4.5cm) node[inner sep=1.5pt,circle] (8) {$e^\prime$};
		\end{tikzpicture} &

		\begin{tikzpicture}[
		scale=0.7,
		level/.style={thick},
		virtual/.style={thick,densely dashed},
		trans/.style={thick,<->,shorten >=2pt,shorten <=2pt,>=stealth},
		classical/.style={thin,double,<->,shorten >=4pt,shorten <=4pt,>=stealth}
		]
	\draw (-3cm,-3.5cm) node[circle,inner sep=1,fill,draw] (1) {};
	\draw (-4cm,-4cm) node[circle,inner sep=1,fill,draw] (2) {};
	\draw (-4cm,-5cm) node[circle,inner sep=1,fill,draw] (3) {};
	\draw (-3cm,-5.5cm) node[circle,inner sep=1,fill,draw] (4) {};
	\draw (-3cm,-6.2cm) node[circle,inner sep=1,fill,draw] (5) {};
	\draw (-3cm,-6.7cm) node[inner sep=1.5pt,circle] (6) {${\mathcal L}$};
	\draw (1)--(2);\draw [line width=2pt] (1)--(4);\draw (3)--(4);\draw (4)--(5);
	\draw  [black,-] (2) to[out=0,in=0] (3);
	\draw [line width=2pt] [black,-] (2) to[out=180,in=180] (3);
	\draw (-4.6cm,-4.6cm) node[inner sep=1.5pt,circle] (7) {$e$};
	\draw (-3.3cm,-4.5cm) node[inner sep=1.5pt,circle] (8) {$e^\prime$};
		\end{tikzpicture} &

		
      \end{tabular}

	\end{center}	
\end{figure}

\subsubsection{Case 3: } Exactly three arcs of the generator
carry a sequence of at least one outgoing arc.
Here, there are two possibilities.
Either both $e$ and $e'$ are among those three arcs
(and those two arcs are symmetric, hence the factor $\frac12$). 
Or, to avoid multiple arcs, we must choose one of $e$ and $e'$ (which are not distinguished from each other), and two additional arcs among the three remaining arcs. 

$$ \frac{3}{2}  {L} \left(\dfrac{{L}}{1-{L}}\right)^3 
+ 3 {L} \left(\dfrac{{L}}{1-{L}}\right)^3 
= \frac{9}{2}  {L} \left(\dfrac{{L}}{1-{L}}\right)^3 $$

\begin{figure}[!h]
	\begin{center}
	  \begin{tabular}{ccccccc}
		\begin{tikzpicture}[
		scale=0.7,
		level/.style={thick},
		virtual/.style={thick,densely dashed},
		trans/.style={thick,<->,shorten >=2pt,shorten <=2pt,>=stealth},
		classical/.style={thin,double,<->,shorten >=4pt,shorten <=4pt,>=stealth}
		]
	\draw (-3cm,-3.5cm) node[circle,inner sep=1,fill,draw] (1) {};
	\draw (-4cm,-4cm) node[circle,inner sep=1,fill,draw] (2) {};
	\draw (-4cm,-5cm) node[circle,inner sep=1,fill,draw] (3) {};
	\draw (-3cm,-5.5cm) node[circle,inner sep=1,fill,draw] (4) {};
	\draw (-3cm,-6.2cm) node[circle,inner sep=1,fill,draw] (5) {};
	\draw (-3cm,-6.7cm) node[inner sep=1.5pt,circle] (6) {${\mathcal L}$};
	\draw (1)--(2) [line width=2pt] ;\draw (1)--(4);\draw (3)--(4);\draw (4)--(5);
	\draw [line width=2pt] (2) to[out=0,in=0] (3);
	\draw [line width=2pt] (2) to[out=180,in=180] (3);
	\draw (-4.6cm,-4.6cm) node[inner sep=1.5pt,circle] (7) {$e$};
	\draw (-3.3cm,-4.5cm) node[inner sep=1.5pt,circle] (8) {$e^\prime$};
	\draw (-4cm,-5.5cm) node[inner sep=1.5pt,circle] (9) {$\tiny{\overleftrightarrow{{sym}}}$};
		\end{tikzpicture} & 

		\begin{tikzpicture}[
		scale=0.7,
		level/.style={thick},
		virtual/.style={thick,densely dashed},
		trans/.style={thick,<->,shorten >=2pt,shorten <=2pt,>=stealth},
		classical/.style={thin,double,<->,shorten >=4pt,shorten <=4pt,>=stealth}
		]
	\draw (-3cm,-3.5cm) node[circle,inner sep=1,fill,draw] (1) {};
	\draw (-4cm,-4cm) node[circle,inner sep=1,fill,draw] (2) {};
	\draw (-4cm,-5cm) node[circle,inner sep=1,fill,draw] (3) {};
	\draw (-3cm,-5.5cm) node[circle,inner sep=1,fill,draw] (4) {};
	\draw (-3cm,-6.2cm) node[circle,inner sep=1,fill,draw] (5) {};
	\draw (-3cm,-6.7cm) node[inner sep=1.5pt,circle] (6) {${\mathcal L}$};
	\draw (1)--(2);\draw (1)--(4); \draw [line width=2pt]  (3)--(4);\draw (4)--(5);
	\draw [line width=2pt] (2) to[out=0,in=0] (3);
	\draw [line width=2pt] (2) to[out=180,in=180] (3);
	\draw (-4.6cm,-4.6cm) node[inner sep=1.5pt,circle] (7) {$e$};
	\draw (-3.3cm,-4.5cm) node[inner sep=1.5pt,circle] (8) {$e^\prime$};
	\draw (-4cm,-5.5cm) node[inner sep=1.5pt,circle] (9) {$\tiny{\overleftrightarrow{{sym}}}$};
		\end{tikzpicture} &

		\begin{tikzpicture}[
		scale=0.7,
		level/.style={thick},
		virtual/.style={thick,densely dashed},
		trans/.style={thick,<->,shorten >=2pt,shorten <=2pt,>=stealth},
		classical/.style={thin,double,<->,shorten >=4pt,shorten <=4pt,>=stealth}
		]
	\draw (-3cm,-3.5cm) node[circle,inner sep=1,fill,draw] (1) {};
	\draw (-4cm,-4cm) node[circle,inner sep=1,fill,draw] (2) {};
	\draw (-4cm,-5cm) node[circle,inner sep=1,fill,draw] (3) {};
	\draw (-3cm,-5.5cm) node[circle,inner sep=1,fill,draw] (4) {};
	\draw (-3cm,-6.2cm) node[circle,inner sep=1,fill,draw] (5) {};
	\draw (-3cm,-6.7cm) node[inner sep=1.5pt,circle] (6) {${\mathcal L}$};
	\draw (1)--(2);\draw [line width=2pt] (1)--(4);\draw (3)--(4);\draw (4)--(5);
	\draw [line width=2pt] (2) to[out=0,in=0] (3);
	\draw [line width=2pt] (2) to[out=180,in=180] (3);
	\draw (-4.6cm,-4.6cm) node[inner sep=1.5pt,circle] (7) {$e$};
	\draw (-3.3cm,-4.5cm) node[inner sep=1.5pt,circle] (8) {$e^\prime$};
	\draw (-4cm,-5.5cm) node[inner sep=1.5pt,circle] (9) {$\tiny{\overleftrightarrow{{sym}}}$};
		\end{tikzpicture} & 

		\begin{tikzpicture}[
		scale=0.7,
		level/.style={thick},
		virtual/.style={thick,densely dashed},
		trans/.style={thick,<->,shorten >=2pt,shorten <=2pt,>=stealth},
		classical/.style={thin,double,<->,shorten >=4pt,shorten <=4pt,>=stealth}
		]
	\draw (-3cm,-3.5cm) node[circle,inner sep=1,fill,draw] (1) {};
	\draw (-4cm,-4cm) node[circle,inner sep=1,fill,draw] (2) {};
	\draw (-4cm,-5cm) node[circle,inner sep=1,fill,draw] (3) {};
	\draw (-3cm,-5.5cm) node[circle,inner sep=1,fill,draw] (4) {};
	\draw (-3cm,-6.2cm) node[circle,inner sep=1,fill,draw] (5) {};
	\draw (-3cm,-6.7cm) node[inner sep=1.5pt,circle] (6) {${\mathcal L}$};
	\draw [line width=2pt] (1)--(2);\draw (1)--(4);\draw [line width=2pt] (3)--(4);\draw (4)--(5);
	\draw  [black,-] (2) to[out=0,in=0] (3);
	\draw [line width=2pt] (2) to[out=180,in=180] (3);
	\draw (-4.6cm,-4.6cm) node[inner sep=1.5pt,circle] (7) {$e$};
	\draw (-3.3cm,-4.5cm) node[inner sep=1.5pt,circle] (8) {$e^\prime$};
		\end{tikzpicture} &

		\begin{tikzpicture}[
		scale=0.7,
		level/.style={thick},
		virtual/.style={thick,densely dashed},
		trans/.style={thick,<->,shorten >=2pt,shorten <=2pt,>=stealth},
		classical/.style={thin,double,<->,shorten >=4pt,shorten <=4pt,>=stealth}
		]
	\draw (-3cm,-3.5cm) node[circle,inner sep=1,fill,draw] (1) {};
	\draw (-4cm,-4cm) node[circle,inner sep=1,fill,draw] (2) {};
	\draw (-4cm,-5cm) node[circle,inner sep=1,fill,draw] (3) {};
	\draw (-3cm,-5.5cm) node[circle,inner sep=1,fill,draw] (4) {};
	\draw (-3cm,-6.2cm) node[circle,inner sep=1,fill,draw] (5) {};
	\draw (-3cm,-6.7cm) node[inner sep=1.5pt,circle] (6) {${\mathcal L}$};
	\draw [line width=2pt] (1)--(2);\draw [line width=2pt] (1)--(4);\draw (3)--(4);\draw (4)--(5);
	\draw  [black,-] (2) to[out=0,in=0] (3);
	\draw [line width=2pt] (2) to[out=180,in=180] (3);
	\draw (-4.6cm,-4.6cm) node[inner sep=1.5pt,circle] (7) {$e$};
	\draw (-3.3cm,-4.5cm) node[inner sep=1.5pt,circle] (8) {$e^\prime$};
		\end{tikzpicture} &


		\begin{tikzpicture}[
		scale=0.7,
		level/.style={thick},
		virtual/.style={thick,densely dashed},
		trans/.style={thick,<->,shorten >=2pt,shorten <=2pt,>=stealth},
		classical/.style={thin,double,<->,shorten >=4pt,shorten <=4pt,>=stealth}
		]
	\draw (-3cm,-3.5cm) node[circle,inner sep=1,fill,draw] (1) {};
	\draw (-4cm,-4cm) node[circle,inner sep=1,fill,draw] (2) {};
	\draw (-4cm,-5cm) node[circle,inner sep=1,fill,draw] (3) {};
	\draw (-3cm,-5.5cm) node[circle,inner sep=1,fill,draw] (4) {};
	\draw (-3cm,-6.2cm) node[circle,inner sep=1,fill,draw] (5) {};
	\draw (-3cm,-6.7cm) node[inner sep=1.5pt,circle] (6) {${\mathcal L}$};
	\draw (1)--(2);\draw [line width=2pt] (1)--(4);\draw [line width=2pt] (3)--(4);\draw (4)--(5);
	\draw  [black,-] (2) to[out=0,in=0] (3);
	\draw [line width=2pt]  (2) to[out=180,in=180] (3);
	\draw (-4.6cm,-4.6cm) node[inner sep=1.5pt,circle] (7) {$e$};
	\draw (-3.3cm,-4.5cm) node[inner sep=1.5pt,circle] (8) {$e^\prime$};
		\end{tikzpicture} 
		
      \end{tabular}

	\end{center}	
\end{figure}

\subsubsection{Case 4: } Exactly four arcs of the generator
carry a sequence of at least one outgoing arc.
Either both $e$ and $e'$ are among those four arcs
(and those two arcs are symmetric, hence the factor $\frac12$), 
so the last two are chosen among the three other arcs of the generator.
Or we choose the three arcs of the generator 
other than $e$ and $e'$, and $e$ (which is undistinguishable from $e'$).

$$ \frac{\binom{3}{2}}{2} {L} \left(\dfrac{{L}}{1-{L}}\right)^4 
+ {L} \left(\dfrac{{L}}{1-{L}}\right)^4
= \frac{5}{2} {L} \left(\dfrac{{L}}{1-{L}}\right)^4$$

\begin{figure}[!h]
	\begin{center}
	  \begin{tabular}{cccc}
		\begin{tikzpicture}[
		scale=0.7,
		level/.style={thick},
		virtual/.style={thick,densely dashed},
		trans/.style={thick,<->,shorten >=2pt,shorten <=2pt,>=stealth},
		classical/.style={thin,double,<->,shorten >=4pt,shorten <=4pt,>=stealth}
		]
	\draw (-3cm,-3.5cm) node[circle,inner sep=1,fill,draw] (1) {};
	\draw (-4cm,-4cm) node[circle,inner sep=1,fill,draw] (2) {};
	\draw (-4cm,-5cm) node[circle,inner sep=1,fill,draw] (3) {};
	\draw (-3cm,-5.5cm) node[circle,inner sep=1,fill,draw] (4) {};
	\draw (-3cm,-6.2cm) node[circle,inner sep=1,fill,draw] (5) {};
	\draw (-3cm,-6.7cm) node[inner sep=1.5pt,circle] (6) {${\mathcal L}$};
	\draw [line width=2pt](1)--(2) ;\draw (1)--(4);\draw [line width=2pt] (3)--(4);\draw (4)--(5);
	\draw [line width=2pt] (2) to[out=0,in=0] (3);
	\draw [line width=2pt] (2) to[out=180,in=180] (3);
	\draw (-4.6cm,-4.6cm) node[inner sep=1.5pt,circle] (7) {$e$};
	\draw (-3.3cm,-4.5cm) node[inner sep=1.5pt,circle] (8) {$e^\prime$};
	\draw (-4cm,-5.5cm) node[inner sep=1.5pt,circle] (9) {$\tiny{\overleftrightarrow{{sym}}}$};
		\end{tikzpicture} & 

		\begin{tikzpicture}[
		scale=0.7,
		level/.style={thick},
		virtual/.style={thick,densely dashed},
		trans/.style={thick,<->,shorten >=2pt,shorten <=2pt,>=stealth},
		classical/.style={thin,double,<->,shorten >=4pt,shorten <=4pt,>=stealth}
		]
	\draw (-3cm,-3.5cm) node[circle,inner sep=1,fill,draw] (1) {};
	\draw (-4cm,-4cm) node[circle,inner sep=1,fill,draw] (2) {};
	\draw (-4cm,-5cm) node[circle,inner sep=1,fill,draw] (3) {};
	\draw (-3cm,-5.5cm) node[circle,inner sep=1,fill,draw] (4) {};
	\draw (-3cm,-6.2cm) node[circle,inner sep=1,fill,draw] (5) {};
	\draw (-3cm,-6.7cm) node[inner sep=1.5pt,circle] (6) {${\mathcal L}$};
	\draw [line width=2pt](1)--(2);\draw [line width=2pt] (1)--(4); \draw (3)--(4);\draw (4)--(5);
	\draw [line width=2pt] (2) to[out=0,in=0] (3);
	\draw [line width=2pt] (2) to[out=180,in=180] (3);
	\draw (-4.6cm,-4.6cm) node[inner sep=1.5pt,circle] (7) {$e$};
	\draw (-3.3cm,-4.5cm) node[inner sep=1.5pt,circle] (8) {$e^\prime$};
	\draw (-4cm,-5.5cm) node[inner sep=1.5pt,circle] (9) {$\tiny{\overleftrightarrow{{sym}}}$};
		\end{tikzpicture} &

		\begin{tikzpicture}[
		scale=0.7,
		level/.style={thick},
		virtual/.style={thick,densely dashed},
		trans/.style={thick,<->,shorten >=2pt,shorten <=2pt,>=stealth},
		classical/.style={thin,double,<->,shorten >=4pt,shorten <=4pt,>=stealth}
		]
	\draw (-3cm,-3.5cm) node[circle,inner sep=1,fill,draw] (1) {};
	\draw (-4cm,-4cm) node[circle,inner sep=1,fill,draw] (2) {};
	\draw (-4cm,-5cm) node[circle,inner sep=1,fill,draw] (3) {};
	\draw (-3cm,-5.5cm) node[circle,inner sep=1,fill,draw] (4) {};
	\draw (-3cm,-6.2cm) node[circle,inner sep=1,fill,draw] (5) {};
	\draw (-3cm,-6.7cm) node[inner sep=1.5pt,circle] (6) {${\mathcal L}$};
	\draw (1)--(2);\draw [line width=2pt] (1)--(4);\draw [line width=2pt](3)--(4);\draw (4)--(5);
	\draw [line width=2pt] (2) to[out=0,in=0] (3);
	\draw [line width=2pt] (2) to[out=180,in=180] (3);
	\draw (-4.6cm,-4.6cm) node[inner sep=1.5pt,circle] (7) {$e$};
	\draw (-3.3cm,-4.5cm) node[inner sep=1.5pt,circle] (8) {$e^\prime$};
	\draw (-4cm,-5.5cm) node[inner sep=1.5pt,circle] (9) {$\tiny{\overleftrightarrow{{sym}}}$};
		\end{tikzpicture} & 

		\begin{tikzpicture}[
		scale=0.7,
		level/.style={thick},
		virtual/.style={thick,densely dashed},
		trans/.style={thick,<->,shorten >=2pt,shorten <=2pt,>=stealth},
		classical/.style={thin,double,<->,shorten >=4pt,shorten <=4pt,>=stealth}
		]
	\draw (-3cm,-3.5cm) node[circle,inner sep=1,fill,draw] (1) {};
	\draw (-4cm,-4cm) node[circle,inner sep=1,fill,draw] (2) {};
	\draw (-4cm,-5cm) node[circle,inner sep=1,fill,draw] (3) {};
	\draw (-3cm,-5.5cm) node[circle,inner sep=1,fill,draw] (4) {};
	\draw (-3cm,-6.2cm) node[circle,inner sep=1,fill,draw] (5) {};
	\draw (-3cm,-6.7cm) node[inner sep=1.5pt,circle] (6) {${\mathcal L}$};
	\draw [line width=2pt] (1)--(2);\draw [line width=2pt] (1)--(4);\draw [line width=2pt] (3)--(4);\draw (4)--(5);
	\draw  [black,-] (2) to[out=0,in=0] (3);
	\draw [line width=2pt] (2) to[out=180,in=180] (3);
	\draw (-4.6cm,-4.6cm) node[inner sep=1.5pt,circle] (7) {$e$};
	\draw (-3.3cm,-4.5cm) node[inner sep=1.5pt,circle] (8) {$e^\prime$};
		\end{tikzpicture} 
      \end{tabular}

	\end{center}	
\end{figure}

\subsubsection{Case 5: } All five arcs of the generator
carry a sequence of at least one outgoing arc.
The fact that $e$ and $e'$ are symmetric explains the factor $\frac12$.

$$\frac{1}{2} {L} \left(\dfrac{{L}}{1-{L}}\right)^5. $$

\begin{figure}[!h]
	\begin{center}
		\begin{tikzpicture}[
		scale=0.7,
		level/.style={thick},
		virtual/.style={thick,densely dashed},
		trans/.style={thick,<->,shorten >=2pt,shorten <=2pt,>=stealth},
		classical/.style={thin,double,<->,shorten >=4pt,shorten <=4pt,>=stealth}
		]
	\draw (-3cm,-3.5cm) node[circle,inner sep=1,fill,draw] (1) {};
	\draw (-4cm,-4cm) node[circle,inner sep=1,fill,draw] (2) {};
	\draw (-4cm,-5cm) node[circle,inner sep=1,fill,draw] (3) {};
	\draw (-3cm,-5.5cm) node[circle,inner sep=1,fill,draw] (4) {};
	\draw (-3cm,-6.2cm) node[circle,inner sep=1,fill,draw] (5) {};
	\draw (-3cm,-6.7cm) node[inner sep=1.5pt,circle] (6) {${\mathcal L}$};
	\draw [line width=2pt](1)--(2) ;\draw [line width=2pt] (1)--(4);\draw [line width=2pt] (3)--(4);\draw (4)--(5);
	\draw [line width=2pt] (2) to[out=0,in=0] (3);
	\draw [line width=2pt] (2) to[out=180,in=180] (3);
	\draw (-4.6cm,-4.6cm) node[inner sep=1.5pt,circle] (7) {$e$};
	\draw (-3.3cm,-4.5cm) node[inner sep=1.5pt,circle] (8) {$e^\prime$};
	\draw (-4cm,-5.5cm) node[inner sep=1.5pt,circle] (9) {$\tiny{\overleftrightarrow{{sym}}}$};
		\end{tikzpicture} 

	\end{center}	
\end{figure}


\subsection{Exact enumeration formulas}

\subsubsection{Unrooted level-2 networks}

\begin{proposition}\label{eq:exact_unrooted2}
For any $n \geq 1$, the number $u_n$ of unrooted level-2 phylogenetic networks with $(n+1)$ leaves is given by
\begin{equation*}
u_n =   (n-1)! \sum\limits_{{0\leq s \leq q \leq p \leq k \leq i \leq n-1}\atop{{j=n-1-i-k-p-q-s \geq 0}\atop{i\neq 0}}} { {~} \atop {  {{{n+i-1}\choose{i}} {{4i+j-1}\choose{j}} {{i}\choose{k}} {{k}\choose{p}} {{p}\choose{q}} {{q}\choose{s}} \qquad \qquad } \atop {\times  \left(3\right)^i \left(\frac{-15}{6}\right)^k \left(-\frac{16}{15}\right)^p \left(-\frac{1}{2}\right)^q \left(-\frac{3}{16}\right)^{s}.} }}
\end{equation*}
\end{proposition}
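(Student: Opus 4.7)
The plan is to combine the Lagrange inversion formula with an iterated binomial expansion, in the same spirit as the proofs of Proposition~\ref{prop:enum_Rooted_1} and Theorem~\ref{thm:GF_Unrooted_1}. First, from the equation $U(z) = z\phi(U(z))$ of Theorem~\ref{thm:GF_Unrooted_2}, Lagrange inversion yields $u_n = n![z^n]U(z) = (n-1)![z^{n-1}]\phi(z)^n$, so the task reduces to an explicit coefficient extraction.

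Next, I would rewrite $\phi(z) = 1/(1-\psi(z))$ and factor $\psi(z) = 3z\, P(z)/(1-z)^4$, where $P(z) = 1 - \tfrac{5}{2}z + \tfrac{8}{3}z^2 - \tfrac{4}{3}z^3 + \tfrac{1}{4}z^4$ is obtained by pulling $3z$ out of the numerator of $\psi$ and normalising. Expanding $\phi(z)^n$ by the (negative) binomial series gives
$$\phi(z)^n = \sum_{i \geq 0} \binom{n+i-1}{i}\, 3^i z^i\, P(z)^i\, (1-z)^{-4i}.$$
Together with the classical expansion $(1-z)^{-4i} = \sum_{j \geq 0} \binom{4i+j-1}{j} z^j$, extracting $[z^{n-1}]$ already accounts for the factors $\binom{n+i-1}{i}$, $3^i$, and $\binom{4i+j-1}{j}$, along with the constraint that the total $z$-degree from $P(z)^i$ and $(1-z)^{-4i}$ equals $n-1-i$.

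The heart of the proof, and the main obstacle, is to recognise that $P(z)$ admits the nested decomposition
$$P(z) = 1 - \tfrac{5}{2}z\left(1 - \tfrac{16}{15}z\left(1 - \tfrac{1}{2}z\left(1 - \tfrac{3}{16}z\right)\right)\right),$$
which I would verify directly by expanding from the innermost parenthesis outwards and matching coefficients against $P(z)$ (noting in particular that $-\tfrac{5}{2} = -\tfrac{15}{6}$). Once this identity is in hand, a four-fold application of the binomial theorem yields
$$P(z)^i = \sum_{0 \leq s \leq q \leq p \leq k \leq i} \binom{i}{k}\binom{k}{p}\binom{p}{q}\binom{q}{s}\left(-\tfrac{15}{6}\right)^k\left(-\tfrac{16}{15}\right)^p\left(-\tfrac{1}{2}\right)^q\left(-\tfrac{3}{16}\right)^s z^{k+p+q+s},$$
which produces both the signed prefactors and the chain of binomial coefficients $\binom{i}{k}\binom{k}{p}\binom{p}{q}\binom{q}{s}$ appearing in the claimed formula.

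Combining these two expansions, collecting the coefficient of $z^{n-1-i}$ in $P(z)^i/(1-z)^{4i}$ forces $j = n-1-i-k-p-q-s \geq 0$, and multiplying by $(n-1)!$ delivers the stated formula. The only remaining subtlety is the constraint $i \neq 0$: for $n \geq 2$ the $i = 0$ contribution vanishes automatically since $\binom{n-2}{n-1} = 0$, so this constraint is harmless. I expect the nested factorisation of $P(z)$ to be the one non-routine step; without it, one is left with an unenlightening sum indexed by $5$-tuples of exponents, whereas the nesting is precisely what converts the extraction into the clean ``chain of binomials'' appearing in the statement.
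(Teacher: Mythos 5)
Your proposal is correct and follows essentially the same route as the paper: Lagrange inversion, expansion of $\phi(z)^n$ via the geometric-type series $(1-w)^{-n}$, a four-fold iterated binomial theorem, and the expansion of $(1-z)^{-4i}$. Your nested factorisation of $P(z)$ is just a repackaging of the paper's step of successively isolating the lowest-degree monomial of the numerator $12z-30z^2+32z^3-16z^4+3z^5$ and telescoping the resulting coefficients, so the two computations coincide term by term.
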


\begin{proof}[Sketch]
Recall that $U(z)=z \phi(U(z))$ with  
$\phi (z) = \frac{1}{1-\frac{3z^5-16z^4+32z^3-30z^2+12z}{4(1-z)^4}}$. 
Using first the classical development of $(1-z)^{-n}$ in series (see Eq.~\eqref{eq:classical_expansion}), and then the binomial theorem, we have 
\begin{align*}
\phi(z)^n = & \sum_{i \geq 0} {{n+i-1} \choose {i}} \left( \frac{12z}{4(1-z)^4}+ \frac{- 30z^2 +32z^3-16z^4+3z^5}{4(1-z)^4}\right)^i \\
 = & \sum_{i \geq 0} \sum_{k=0}^i {{n+i-1} \choose {i}} {{i} \choose {k}} \left( \frac{12z}{4(1-z)^4}\right)^{i-k} \left(\frac{- 30z^2 +32z^3-16z^4+3z^5}{4(1-z)^4}\right)^k
\textrm{.}
\end{align*}
We continue applying the binomial theorem inside the above formula, isolating each time the term with the lowest degree in the numerator (that is, first $\tfrac{- 30z^2}{4(1-z)^4}$, second  $\tfrac{32z^3}{4(1-z)^4}$, \dots). This yields 
{\fontsize{8}{9}\selectfont
\begin{align*}
\phi(z)^n = & \sum_{i \geq 0} \sum_{k=0}^i \sum_{p=0}^k \sum_{q=0}^p \sum_{s=0}^q 
{{n+i-1} \choose {i}} {{i} \choose {k}} {{k} \choose {p}} {{p} \choose {q}} {{q} \choose {s}} \\
& \hspace{1cm}
\left( \frac{12z}{4(1-z)^4}\right)^{i-k} 
\left(\frac{- 30z^2}{4(1-z)^4}\right)^{k-p}
\left(\frac{32z^3}{4(1-z)^4}\right)^{p-q}
\left(\frac{-16z^4}{4(1-z)^4}\right)^{q-s}
\left(\frac{3z^5}{4(1-z)^4}\right)^{s}\\
= & \sum_{i \geq 0} \sum_{k=0}^i \sum_{p=0}^k \sum_{q=0}^p \sum_{s=0}^q 
{{n+i-1} \choose {i}} {{i} \choose {k}} {{k} \choose {p}} {{p} \choose {q}} {{q} \choose {s}} 
\frac{
(3)^{i} 
(\tfrac{-15}{6})^{k}
(\tfrac{-16}{15})^{p}
(\tfrac{-1}{2})^{q}
(\tfrac{-3}{16})^{s}}{(1-z)^{4i}} z^{i+k+p+q+s}
\textrm{.}
\end{align*}
}
The result then follows from developing of $(1-z)^{-4i}$ in series as $(1-z)^{-4i} = \sum_{j \geq 0} {{4i+j-1} \choose {j}} z^j$ 
and using the Lagrange inversion formula. 
\end{proof}

\subsubsection{Rooted level-2 networks}

\begin{proposition}\label{exactrooted2}
For any $n \geq 1$, the number $\ell_n$ of rooted level-2 phylogenetic networks with $n$ leaves is given by 
\[
\ell_n =   (n-1)! \sum\limits_{{0 \leq t \leq m  \leq s \leq q \leq p \leq k \leq i \leq n-1}\atop{{j=n-1-i-k-p-q-s-m-t \geq 0}\atop{i\neq 0}}} { {~} \atop {  {{{n+i-1}\choose{i}} {{6i+j-1}\choose{j}} {{i}\choose{k}} {{k}\choose{p}} {{p}\choose{q}} {{q}\choose{s}}{{s}\choose{m}} {{m}\choose{t}}\qquad \qquad } \atop {\times \left(9\right)^i \left(\frac{-17}{6}\right)^{k}  \left(\frac{-53}{34}\right)^p \left(\frac{-148}{159}\right)^q \left(\frac{-81}{148}\right)^s \left(\frac{-8}{27}\right)^m \left(\frac{-1}{8}\right)^t.} }}
\]
\end{proposition}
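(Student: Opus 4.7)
The plan is to mirror exactly the proof strategy used for Proposition~\ref{eq:exact_unrooted2}, since the underlying equation for $L(z)$ given in Theorem~\ref{thm:GF_rooted_level2} has the same structural form $L(z) = z\phi(L(z))$ with $\phi(z) = 1/(1-A(z))$, where this time $A(z) = \frac{36z - 102z^2 + 159z^3 - 148z^4 + 81z^5 - 24z^6 + 3z^7}{4(1-z)^6}$. By Lagrange inversion, $\ell_n = n! [z^n] L(z) = (n-1)! [z^{n-1}] \phi(z)^n$, so the task reduces to expanding $\phi(z)^n$ as an explicit power series.

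First, I would expand $\phi(z)^n = (1-A(z))^{-n}$ using the classical identity from Eq.~\eqref{eq:classical_expansion}, yielding $\phi(z)^n = \sum_{i\geq 0} \binom{n+i-1}{i} A(z)^i$. Then I would iteratively apply the binomial theorem to $A(z)^i$, peeling off one term of the numerator polynomial at each step: first separate $\frac{36z}{4(1-z)^6}$ from the remaining part, introducing the summation index $k$ and producing the factor $(36/4)^i = 9^i$; then split $\frac{-102z^2}{4(1-z)^6}$ from the rest, introducing the index $p$ and the factor $(-102/36)^k = (-17/6)^k$; and continue in this fashion with $\frac{159z^3}{4(1-z)^6}$, $\frac{-148z^4}{4(1-z)^6}$, $\frac{81z^5}{4(1-z)^6}$, $\frac{-24z^6}{4(1-z)^6}$, and finally $\frac{3z^7}{4(1-z)^6}$. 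The introduced summation indices stack as $0 \leq t \leq m \leq s \leq q \leq p \leq k \leq i$, and the accumulated numerical factors are precisely the ratios of consecutive coefficients in the numerator polynomial, namely $9$, $-17/6$, $-53/34$, $-148/159$, $-81/148$, $-8/27$, $-1/8$, which match those appearing in the statement.

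At the end of this expansion, all denominators combine into a single $(1-z)^{-6i}$, which I would expand once more via Eq.~\eqref{eq:classical_expansion} as $\sum_{j \geq 0} \binom{6i+j-1}{j} z^j$. Collecting the total $z$-degree contributed by the various $z$-powers pulled out at each step, extraction of $[z^{n-1}]$ enforces $j = n-1 - i - k - p - q - s - m - t \geq 0$, and the condition $i \neq 0$ appears because when $i=0$ only the constant term $1$ of $\phi^0$ remains, which contributes nothing to $[z^{n-1}]$ for $n \geq 1$. Multiplying by $(n-1)!$ then gives exactly the claimed formula.

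The main obstacle is not conceptual but purely clerical: with seven terms in the numerator polynomial and six nested binomial expansions, the bookkeeping of exponents, signs, and binomial coefficients is significantly longer than in the unrooted case, and one must verify carefully that each ratio $-17/6$, $-53/34$, etc., appears with the correct index (respectively $k$, $p$, $q$, $s$, $m$, $t$) and that the combined $z$-exponents assemble to $i+k+p+q+s+m+t+j$. Since the derivation is entirely mechanical once the pattern is set up, I would sketch only the first two iterations in detail and indicate that the remaining ones proceed identically, exactly as was done in the proof of Proposition~\ref{eq:exact_unrooted2}.
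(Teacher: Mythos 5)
Your proposal is correct and follows essentially the same route as the paper, whose own proof of this proposition is only a sketch deferring to the computation pattern of Proposition~\ref{eq:exact_unrooted2}: Lagrange inversion, expansion of $(1-A(z))^{-n}$ via Eq.~\eqref{eq:classical_expansion}, iterated binomial peeling of the numerator terms producing the ratios $9, -\tfrac{17}{6}, -\tfrac{53}{34}, -\tfrac{148}{159}, -\tfrac{81}{148}, -\tfrac{8}{27}, -\tfrac{1}{8}$, and a final expansion of $(1-z)^{-6i}$. Your bookkeeping of the indices, signs, and the telescoping of the $z$-exponents to $i+k+p+q+s+m+t+j$ all checks out.
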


\begin{proof}[Sketch]
This follows again from the Lagrange inversion formula, 
using the equation ${L}(z)= z\phi( {L}(z))$ for the function $\phi$ given in Theorem~\ref{thm:GF_rooted_level2}. 
The computations involve the usual development of $(1-z)^{-n}$ given by Eq.~\eqref{eq:classical_expansion} and the binomial formula, 
applied following exactly the same steps as in the proof of Proposition~\ref{eq:exact_unrooted2}. 
Details of the computations are left to the reader.
\end{proof}

\end{document}